\def\@abssec#1{\vspace{.05in}\footnotesize \parindent .2in
{\bf #1. }\ignorespaces}
\newtheorem{theorem}{Theorem}[section]
\newtheorem{lemma}[theorem]{Lemma}
\newtheorem{proposition}[theorem]{Proposition}
\newtheorem{remark}[theorem]{Remark}
\def \Rm {\mathbb R}
\def \Zm {\mathbb Z}
\newcommand{\eps}{\varepsilon}
\newcommand{\ep}{\epsilon}
\newcommand{\E}{\mathbb E}
\renewcommand{\P}{\mathbb P}
\newcommand{\dsum}{\displaystyle\sum}
\newcommand{\dint}{\displaystyle\int}
\newcommand{\pdr}[2]{\dfrac{\partial{#1}}{\partial{#2}}}
\newcommand{\dr}[2]{\dfrac{d{#1}}{d{#2}}}
\newcommand{\drr}[2]{\dfrac{d^2{#1}}{d{#2}^2}}
 \newcommand{\bh}{\mathbf h}
\newcommand{\bj}{\mathbf j} \newcommand{\bk}{\mathbf k}
\newcommand{\bn}{\mathbf n}
\newcommand{\bp}{\mathbf p} \newcommand{\bq}{\mathbf q}
\newcommand{\bt}{\mathbf t}
\newcommand{\bx}{\mathbf x} \newcommand{\by}{\mathbf y}
\newcommand{\bz}{\mathbf z} 
\newcommand{\bD}{\mathbf D}
\newcommand{\bzero}{\mathbf 0} \newcommand{\bone}{\mathbf 1}
\newcommand{\bxi}{\boldsymbol \xi}
\newcommand{\bzeta}{\boldsymbol \zeta}
\newcommand{\cout}[1]{}
 \renewcommand{\arraystretch}{1.5}
\title{Central limits and homogenization in random media}
\author{Guillaume Bal \thanks{Department of Applied Physics and 
        Applied Mathematics, Columbia University, 
        New York NY, 10027; gb2030@columbia.edu}}
\begin{document}
 
\maketitle


\begin{abstract}
  We consider the perturbation of elliptic operators of the form
  $P(\bx,\bD)$ by random, rapidly varying, sufficiently mixing,
  potentials of the form $q(\frac{\bx}\eps,\omega)$. We analyze the
  source and spectral problems associated to such operators and show
  that the properly renormalized difference between the perturbed and
  unperturbed solutions may be written asymptotically as $\eps\to0$ as
  explicit Gaussian processes. Such results may be seen as central
  limit corrections to the homogenization (law of large numbers)
  process.  Similar results are derived for more general elliptic
  equations in one dimension of space.
  
  The results are based on the availability of a rapidly converging
  integral formulation for the perturbed solutions and on the use of
  classical central limit results for random processes with
  appropriate mixing conditions.
\end{abstract}
 

\renewcommand{\thefootnote}{\fnsymbol{footnote}}
\renewcommand{\thefootnote}{\arabic{footnote}}

\renewcommand{\arraystretch}{1.1}

\paragraph{keywords:}
  Homogenization, central limit, mixing coefficients, partial
  differential equations with random coefficients, random
  oscillatory integrals.

\paragraph{AMS:} 35R60, 35J05, 35P20, 60H05.



\section{Introduction}
\label{sec:intro}

There are many practical applications of partial differential
equations with coefficients that oscillate at a faster scale than the
scale of the domain on which the equation is solved. In such settings,
it is often necessary to model the rapidly oscillatory coefficients as
random processes, whose properties are known only at a statistical
level. The numerical simulation of the resulting partial differential
equation with random coefficients becomes a daunting task.

Two simplifications are then typically considered. The first
simplification consists in assuming that the coefficients oscillate
very rapidly and replacing the equation with random coefficients by a
homogenized equation with deterministic (effective medium)
coefficients. The homogenization of many linear and nonlinear partial
differential equations with periodic \cite{blp1,Milton-02} and random
highly oscillatory coefficients has been obtained to date
\cite{BMW-JRAM-94,CSW-CPAM-05,JKO-SV-94,Koz-MSb-79,L-PTRF-01,LS-CDPE-05,PV-RF-81}.

The solution to the equation with random equations may also be
interpreted as a functional of an infinite number of random variables
and expanded in polynomial chaoses \cite{CM-AM-47,W-AJM-38}.  A second
simplification consists then in discretizing the randomness in the
coefficients over sufficiently low dimensional subspaces -primarily by
Galerkin projection- so that the partial differential equation with
random coefficients may be fully solved numerically. We refer the
reader to e.g.
\cite{BTZ-CMAME-05,FST-CMAME-05,GS-91,HLRZ-JCP-06,MK-CMAME-05,ZZ-JCP-04}
for references on this active area of research. Such problems, which
are posed in domains of dimension $d+Q$, where $d$ is spatial
dimension and $Q$ the dimension of the random space, are
computationally very intensive, although they have the main advantage
of providing realistic statistical fluctuations of the random
solution, which are absent in the homogenization approximation.

The two aforementioned approaches can hardly be reconciled.
Homogenization arises in a limit where the law of large numbers
applies and the solution becomes asymptotically a deterministic
quantity. The number of random variables describing the random
coefficients thus tends to infinity, a limit that is difficult to
obtain by polynomial chaos-type expansions.

In several practical settings such as e.g. the analysis of geological
basins or the manufacturing of composite materials, one may be
interested in an intermediate situation. We may observe experimental
fluctuations in the random solution which are not accounted for by the
homogenized solution, and yet, may be in the presence of a
sufficiently rich random environment so that full solutions of the
equation with random coefficients may not be feasible. This is the
type of settings that motivate the studies of this paper. Our main
objective will be to characterize the statistical structure of 
the corrector to the homogenized, deterministic, limit. Whereas
the deterministic limit may be seen as a law of large number effect,
we are interested in characterizing the next order term, which
arises as an application of the central limit theory.

In most practical cases of interest, starting with the elliptic
operator $\nabla\cdot a_\eps(\bx,\omega)\nabla$, with $\bx\in
D\subset\Rm^d$ and $\omega\in\Omega$ the space of random realizations,
the calculation of the homogenized tensor is difficult and does not
admit analytic expressions except in very simple cases
\cite{JKO-SV-94}. The amplitude of the corrector to homogenization,
let alone its statistical description, remains largely open. The best
estimates currently available in spatial dimension $d\geq2$ may be
found in \cite{Yur-SM-86}; see also \cite{CK-JCAM-07,EMZ-05},
\cite{CN-EJP-00} for discrete equations, and \cite{B-DCDSB-07} for
applications of such error estimates. Only in one dimension of space
do we have an explicit characterization of the effective diffusion
coefficient and of the corrector \cite{BP-AIHP-04}. Unlike the case of
periodic media, where the corrector is proportional to the size of the
cell of periodicity $\eps$, the random corrector to the homogenized
solution is an explicitly characterized Gaussian process of order
$\sqrt\eps$ when the random coefficient has integrable correlation
\cite{BP-AIHP-04}.  In the case of correlations that are non
integrable and of the form $R(t)\sim t^{-\alpha}$ for some
$0<\alpha<1$, the corrector may be shown to be still an explicitly
characterized Gaussian process, but now of order $\eps^\frac\alpha2$
\cite{BGMP-AA}.

\medskip 

The reason why explicit characterizations of the correctors may be
obtained in \cite{BGMP-AA,BP-AIHP-04} is that the solution to the
heterogeneous elliptic equation may be written explicitly.  Correctors
to homogenization have been obtained in more general settings.  The
analysis of homogenized solutions and central limit correctors to
evolution equations with time dependent randomly varying coefficients
is well known; see e.g.
\cite{BP-SIAP-78,EK-86,FGPS-07,khasminskii,kushner,PSV-SMDS} for
reference on the Markov diffusion approximation and the method of the
perturbed test function. In the context of the one-dimensional
Helmholtz equation, this would correspond to solving the equation on
an interval $(0,a)$ with initial conditions of the form $u_\eps(0)$
and $u'_\eps(0)$ known. The asymptotic limit of boundary value
problems, which corresponds in our example to prescribing $u_\eps(0)$
and $u_\eps(a)$, requires somewhat different mathematical techniques.
We refer the reader to \cite{FM-AAP-94,WF-CPAM-79} for results in the
setting of one-dimensional problems.  Note that in the case of a much
stronger potential, in dimension $d=1$ of the form
$\eps^{-\frac12}q_\eps$ instead of $q_\eps$ in the above Helmholtz
operator, the deterministic homogenization limit no longer holds.
Somewhat surprisingly, the solution of a corresponding evolution
equation still converges to a well identified limit; see
\cite{PP-GAK-06}.

In spatial dimensions two and higher, a methodology to compute the
Gaussian fluctuations for boundary value problems of the form $-\Delta
u_\eps +F(u_\eps,\bx,\frac \bx\eps)=f(\bx)$ was developed in
\cite{FOP-SIAP-82}. An explicit expression for the fluctuations was
obtained and proved for the linear equation $(-\Delta +\lambda+q(\frac
\bx\eps))u_\eps(\bx)=f(\bx)$ in dimension $d=3$. In this paper, we
revisit the problem and generalize it to linear problems of the form
$P(\bx,\bD)u_\eps + q_\eps(\bx)u_\eps = f(\bx)$ with an unperturbed
equation $P(\bx,\bD)u =f$, which admits a Green's function
$G(\bx,\by)$ that is more than square integrable (see
\eqref{eq:intgreen} below).  The prototypical example of interest is
the operator $P(\bx,\bD)=-\nabla\cdot a(\bx)\nabla+q_0(\bx)$ with
sufficiently smooth (deterministic) coefficients $a(\bx)$ and
$q_0(\bx)$ posed on a bounded domain with, say, Dirichlet boundary
conditions, for which the Green's function is more than square
integrable in dimensions $1\leq d\leq 3$.

Under appropriate mixing conditions on the random process
$q_\eps(\bx,\omega)$, we will show that arbitrary spatial moments of
the correctors
\begin{displaymath}
  \Big(\dfrac{u_\eps - u_0}{\eps^{\frac d2}},M\Big),
\end{displaymath}
where $u_\eps$ and $u$ are the solutions to perturbed and unperturbed
equations, respectively, and where $M$ is a smooth function, converge
in distribution to Gaussian random variables, which admit a convenient
and explicit representation as a stochastic integral with respect to a
standard (multi-parameter) Wiener process.  If we denote by $u_1$ the
weak limit of $u_{1\eps}=\eps^{-\frac d2}(u_\eps-u_0)$, we observe,
for $1\leq d\leq 3$, that $\E\{v_{1\eps}^2(\bx,\omega)\}$ converges to
$\E\{u_1^2(\bx,\omega)\}$, where $v_{1\eps}$ is the leading term in
$u_{1\eps}$ up to an error term we prove is of order $O(\eps^d)$ in
$L^1(\Omega \times D)$. This shows that the limiting process $u_1$
captures all the fluctuations of the corrector to homogenization.
This result is in sharp contract to the cases $d\geq4$ and to
homogenization in periodic media in arbitrary dimension, where the
weak limit of the corrector captures a fraction of the energy of that
corrector. We thus see that corrections to homogenization are somewhat
different for $1\leq d\leq 3$ and $d\geq4$ so that the square
integrability of the Green's function is a natural condition in the
framework of homogenization in random media.

\medskip

We obtain similar expressions for the spectral elements of the
perturbed elliptic equation. We find that the correctors to the
eigenvalues and the spatial moments of the correctors to the
corresponding eigenvectors converge in distribution to Gaussian
variables as the correlation length $\eps$ vanishes. In the setting
$d=1$, we obtain similar result for more general elliptic operators of
the form $-\frac d{dx}(a_\eps\frac d{dx})+q_0+q_\eps$ by
appropriate use of harmonic coordinates \cite{Koz-MSb-79}.  The
extension to similar operators in dimension $d\geq2$ remains open.

\medskip

As was already mentioned, the theory developed here allows us to
characterize the statistical properties of the solutions to equations
with random coefficients in the limit where the correlation length
(the scale of the heterogeneities) is small compared to the overall
size of the domain. In many practical problems, it is a good
approximation to the statistical structure of the solution of the
equation and possibly be all that one is interested in.

Asymptotically explicit expressions for the correctors may also find
applications in the testing of numerical algorithms. Several numerical
schemes have been developed to estimate the heterogeneous solution
accurately in the regime of validity of homogenization by using
discretizations with a length scale $h$ that is large compared to the
correlation length: $h\gg\eps$; see e.g.
\cite{ABr-MMS-05,A-SINUM-04,EMZ-05,EHW-SINUM-00,OZ-Arc-06}. A possible
application of the explicit expression for the correctors is to see
whether these algorithms can capture the central limit correction to
the solutions to the random partial differential equations.

An other application concerns the reconstruction of the constitutive
parameters of a differential equation from various measurements, for
instance the reconstruction of the potential in a Helmholtz equation
from spectral measurements \cite{KKL-01,RS-MC92}. In such cases,
reconstruction algorithms provide lower-variance reconstructions when
the cross-correlations are known and used optimally in the inversion;
see e.g. \cite{vogel-SIAM02}. Provided that $q_0$ is the deterministic
quantity that we wish to reconstruct, higher-frequency components that
we may not hope to reconstruct still influence available data.  The
correctors obtained in this paper provide asymptotic estimates for the
cross-correlation of the measured data, which allow us to obtain
lower-variance reconstructions for $q_0$; see \cite{BRe-SIP-08}.

\medskip

An outline for the rest of the paper is as follows. Section
\ref{sec:helmholtz} considers the convergence of the corrector to the
homogenized solution for the Helmholtz equation with source term in
dimensions $1\leq d\leq 3$. The proof is based on showing the rapid
convergence of a Lippman-Schwinger-type integral formulation (see
\eqref{eq:intHelm} below), and on applying central limit theorems to
random oscillatory integrals. The behavior of the oscillatory
integrals is considered in arbitrary dimensions in section
\ref{sec:larger}, where a comparison between homogenization in random
and periodic environments is also considered.  The generalization to a
more general one-dimensional elliptic source problem in detailed in
section \ref{sec:1dbrdypbs}. The results on the correctors obtained
for source problems are then extended to correctors for spectral
problems in section \ref{sec:spectral}. The proof is based on adapting
classical results \cite{Kato66} on the convergence of the spectra of
operators that converge on average in the uniform norm. The results
obtained for the spectral problems are then briefly applied to the
analysis of evolution equations. Some concluding remarks are presented
in section \ref{sec:conclu}.

\section{Correctors for Helmholtz equations}
\label{sec:helmholtz}

Consider an equation of the form:
\begin{equation}
  \label{eq:Helmtype}
  \begin{array}{ll}
  P(\bx,\bD) u_\eps + q_\eps u_\eps =f ,\qquad &\bx\in D\\
  u_\eps =0 \qquad &\bx\in \partial D,
  \end{array}
\end{equation}
where $P(\bx,\bD)$ is a (deterministic) self-adjoint, elliptic,
pseudo-differential operator and $D$ an open bounded domain in
$\Rm^d$. We assume that $P(\bx,\bD)$ is invertible with symmetric and
``more than square integrable'' Green's function. More precisely, we
assume that the equation
\begin{equation}
  \label{eq:Helmunpert}
  \begin{array}{ll}
  P(\bx,\bD) u =f ,\qquad &\bx\in D\\
  u=0 \qquad &\bx\in\partial D,
  \end{array}
\end{equation}
admits a unique solution
\begin{equation}
  \label{eq:solHelmunpert}
  u(\bx) = {\cal G} f (\bx) := \dint_{D} G(\bx,\by) f(\by) d\by,
\end{equation}
and that the real-valued and non-negative (to simplify notation)
symmetric kernel $G(\bx,\by)=G(\by,\bx)$ has more than square
integrable singularities so that
\begin{equation}
  \label{eq:intgreen}
  \bx\mapsto\Big(\dint_D  |G|^{2+\eta}(\bx,\by)d\by\Big)^{\frac{1}{2+\eta}}
   \quad \mbox{ is bounded  on } D \mbox{ for some } \eta>0.
\end{equation}  
The assumption is typically satisfied for operators of the form
$P(\bx,D)=-\nabla\cdot a(\bx) \nabla + \sigma(\bx)$ for $a(\bx)$
uniformly bounded and coercive, $\sigma(\bx)\geq0$, and in dimension
$d\leq3$, with $\eta=+\infty$ when $d=1$ (i.e., the Green's function
is bounded), $\eta<\infty$ for $d=2$, and $\eta<1$ for $d=3$.

Let $\tilde q_\eps(\bx,\omega)=q(\frac{\bx}{\eps},\omega)$ be a mean
zero, (strictly) stationary, process defined on an abstract
probability space $(\Omega,{\cal F},\P)$ \cite{breiman}.  The process
$\tilde q_\eps(\bx,\omega)$ will be modified in the sequel as the
process $q_\eps(\bx,\omega)$ appearing in \eqref{eq:Helmtype} to
ensure that solutions to the Helmholtz equation exist.  We assume that
$q(\bx,\omega)$ has an integrable correlation function:
\begin{equation}
  \label{eq:corrq}
  R(\bx) = \E\{q(\by,\omega)q(\by+\bx,\omega)\},
\end{equation}
where $\E$ is mathematical expectation associated to $\P$. The above
expression is independent of $\by$ by stationarity of the process
$q(\bx,\omega)$.  We also assume that $q(\bx,\omega)$ is strongly
mixing in the following sense. For two Borel sets $A,B\subset \Rm^d$,
we denote by ${\cal F}_A$ and ${\cal F}_B$ the sub-$\sigma$ algebras
of ${\cal F}$ generated by the field $q(\bx,\omega)$. Then we assume
the existence of a ($\rho-$) mixing coefficient $\varphi(r)$ such that
\begin{equation}
  \label{eq:strongmixing}
  \Big| \dfrac{\E\big\{(\eta-\E\{\eta\})(\xi-\E\{\xi\})\big\}}
     {\big(\E\{\eta^2\}\E\{\xi^2\}\big)^{\frac12}}\Big|
   \leq \varphi\big(2\,d(A,B)\big)
\end{equation}
for all (real-valued) random variables $\eta$ on $(\Omega,{\cal
  F}_A,\P)$ and $\xi$ on $(\Omega,{\cal F}_B,\P)$.  Here, $d(A,B)$ is
the Euclidean distance between the Borel sets $A$ and $B$. The
multiplicative factor $2$ in \eqref{eq:strongmixing} is here only for
convenience.  Moreover, we assume that $\varphi(r)$ is bounded and
decreasing.  We will impose additional restrictions on the process to
ensure that the equation \eqref{eq:Helmtype} admits a solution.

We formally recast \eqref{eq:Helmtype} as 
\begin{equation}
  \label{eq:ueps1}
  u_\eps  = {\cal G} (f-q_\eps u_\eps),
\end{equation}
where ${\cal G}=P(\bx,D)^{-1}$, and after one more iteration as
\begin{equation}
  \label{eq:intHelm}
  u_\eps = {\cal G} f - {\cal G} q_\eps {\cal G} f + {\cal G} q_\eps
    {\cal G} q_\eps u_\eps.
\end{equation}
This is the integral equation we aim to analyze. 

\subsection{Existence and error estimates}
\label{sec:existerror}

In order for the above equation to admit a unique solution, we need to
ensure that $(I-{\cal G} q_\eps {\cal G} q_\eps)$ is invertible
$\P-$a.s. We modify the process $\tilde q_\eps(\bx,\omega)$ defined
above on a set of measure of order $\eps$ so that ${\cal G} q_\eps
{\cal G} q_\eps$ has spectral radius bounded by $\rho<1$ $\P-$a.s. To
do so and to estimate the source terms in \eqref{eq:intHelm}, we need a
few lemmas.
\begin{lemma}
  \label{lem:mixingfourpoints}
  Let $q(\bx,\omega)$ be strongly mixing so that
  \eqref{eq:strongmixing} holds and such that $\E\{q^6\}<\infty$.
  Then, we have:
  \begin{equation}
    \label{eq:bdq1234}
    \big|\E\{q(\bx_1)q(\bx_2)q(\bx_3)q(\bx_4)\}\big| \lesssim
    \sup\limits_{\{\by_k\}_{1\leq k\leq 4}=\{\bx_k\}_{1\leq k\leq 4}}
    \varphi^{\frac12}(|\by_1-\by_3|)\varphi^{\frac12}(|\by_2-\by_4|)
    \E\{q^6\}^{\frac23}.
  \end{equation}
\end{lemma}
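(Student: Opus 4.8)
The only tool needed is a single application of the $\rho$-mixing bound \eqref{eq:strongmixing}, used together with the mean-zero property of $q$ and the supremum appearing on the right-hand side. Since the left-hand side is symmetric in $\bx_1,\dots,\bx_4$ while the right-hand side is a supremum over all relabelings, it suffices to bound the left-hand side by $\varphi^{\frac12}(|\by_1-\by_3|)\varphi^{\frac12}(|\by_2-\by_4|)\E\{q^6\}^{\frac23}$ for one convenient pairing of the points into $\{\by_1,\by_3\}$ and $\{\by_2,\by_4\}$, as any such value is dominated by the supremum. The plan is to isolate the single point farthest from its nearest neighbor, apply mixing once, and then split the resulting single factor of $\varphi$ into a product of two square roots by a short geometric argument. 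To this end set $\delta_k=\min_{j\ne k}|\bx_k-\bx_j|$ and $\Delta=\max_k\delta_k$, attained at some index $m$.

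I write $\E\{q(\bx_1)\cdots q(\bx_4)\}=\E\{\eta\,\xi\}$ with $\xi=q(\bx_m)$ and $\eta=\prod_{j\ne m}q(\bx_j)$, regarded as random variables on $\mathcal F_B$ and $\mathcal F_A$ for $B=\{\bx_m\}$ and $A=\{\bx_j:j\ne m\}$, so that $d(A,B)=\delta_m=\Delta$. Because $\E\{\xi\}=\E\{q\}=0$, the product of means in \eqref{eq:strongmixing} vanishes and the inequality reduces to $|\E\{\eta\xi\}|\le\varphi(2\Delta)\,(\E\{\eta^2\}\E\{\xi^2\})^{\frac12}$. Hölder's inequality with three equal exponents, together with stationarity, gives $\E\{\eta^2\}=\E\{\prod_{j\ne m}q^2(\bx_j)\}\le\E\{q^6\}$, while $\E\{\xi^2\}=\E\{q^2\}\le\E\{q^6\}^{\frac13}$ by Jensen; hence $|\E\{q(\bx_1)\cdots q(\bx_4)\}|\le\varphi(2\Delta)\,\E\{q^6\}^{\frac23}$.

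It remains to dominate $\varphi(2\Delta)$ by the supremum, for which I must produce a pairing of the four points into $\{\by_1,\by_3\}$ and $\{\by_2,\by_4\}$ whose two within-pair distances are both at most $2\Delta$; monotonicity of $\varphi$ then yields $\varphi(2\Delta)=\varphi^{\frac12}(2\Delta)\varphi^{\frac12}(2\Delta)\le\varphi^{\frac12}(|\by_1-\by_3|)\varphi^{\frac12}(|\by_2-\by_4|)$, which is bounded by the supremum. To construct such a pairing I take the globally closest pair of points as one pair; if the two remaining points lie within $2\Delta$ of each other, I pair them and stop. Otherwise neither remaining point is the other's nearest neighbor, so each of the two has its nearest neighbor, at distance $\le\Delta$, among the closest pair; these two nearest neighbors must be distinct, because if they coincided the triangle inequality would force the two remaining points to lie within $2\Delta$ of one another. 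Matching the remaining points to their distinct nearest neighbors then gives a pairing with both edges of length $\le\Delta\le2\Delta$.

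The analytic content — one use of mixing, Hölder, and Jensen — is routine; the genuinely delicate step is this last combinatorial-geometric lemma, where the triangle inequality is exactly what rules out the degenerate configuration of three points sharing a single nearest neighbor. The factor $2$ built into the argument of $\varphi$ in \eqref{eq:strongmixing}, combined with the monotonicity of $\varphi$, supplies precisely the slack needed to pass from the isolated nearest-neighbor distance $\Delta$ to the within-pair distances of an admissible relabeling, and so to the advertised product of two square-root factors.
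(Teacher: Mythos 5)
Your proof is correct, but it reaches \eqref{eq:bdq1234} by a genuinely different route than the paper. The paper never isolates a single ``most isolated'' point: it selects the two mutually farthest points $\by_1,\by_2$ (with $\by_1$ the one nearer the remaining two), applies the mixing bound \eqref{eq:strongmixing} \emph{twice} --- once peeling off $q(\by_1)$ and once peeling off $q(\by_2)$ --- to obtain two separate estimates $\varphi(|\by_1-\by_3|)\,\E\{q^6\}^{\frac23}$ and $\varphi(|\by_2-\by_4|)\,\E\{q^6\}^{\frac23}$ (the second requiring a triangle-inequality case analysis when $\by_3$ happens to be the nearest of the remaining points to both $\by_1$ and $\by_2$, which is where the paper spends the factor $2$ built into \eqref{eq:strongmixing}), and then converts the minimum of the two bounds into the advertised product of square roots via $a\wedge b\leq (ab)^{\frac12}$. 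You instead invoke mixing only once, against the most isolated point, and manufacture the two square-root factors by splitting the single quantity $\varphi(2\Delta)$ and combining monotonicity with your matching lemma (every four points admit a perfect matching with both edges at most $2\Delta$); the factor $2$ is spent there instead. Both arguments are complete: your H\"older/Jensen step producing the exponent $\frac23$ is the same as the paper's, and your combinatorial lemma is correct as stated --- taking the globally closest pair, then assigning each remaining point to its nearest neighbor, with the triangle inequality ruling out a shared nearest neighbor, does the job. The trade-off is that the paper avoids any matching argument at the cost of a second application of mixing and a case distinction, while your version concentrates all the geometry in one clean deterministic lemma and uses the probabilistic hypothesis exactly once.
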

Here, we use the notation $a\lesssim b$ when there is a positive constant $C$
such that $a\leq Cb$.
\begin{proof}
  Let $\by_1$ and $\by_2$ be two points in $\{\bx_k\}_{1\leq k\leq 4}$
  such that $d(\by_1,\by_2)\geq d(\bx_i,\bx_j)$ for all $1\leq
  i,j\leq4$ and such that $d(\by_1,\{\bz_3,\bz_4\})\leq
  d(\by_2,\{\bz_3,\bz_4\})$, where
  $\{\by_1,\by_2,\bz_3,\bz_4\}=\{\bx_k\}_{1\leq k\leq 4}$.
  
  Let us call $\by_3$ a point in $\{\bz_3,\bz_4\}$ closest to $\by_1$.
  We call $\by_4$ the remaining point in $\{\bx_k\}_{1\leq k\leq 4}$.  
  We have, using \eqref{eq:strongmixing} and $\E\{q\}=0$, that:
  \begin{displaymath}
  \big|\E\{q(\bx_1)q(\bx_2)q(\bx_3)q(\bx_4)\}\big| \lesssim
   \varphi(2|\by_1-\by_3|) (\E\{q^2\})^{\frac12}
    \big(\E\{ (q(\by_2)q(\by_3)q(\by_4))^2\}\big)^{\frac12}.
  \end{displaymath}
  The last two terms are bounded by $\E\{q^6\}^{\frac16}$ and
  $\E\{q^6\}^{\frac12}$, respectively, using H\"older's inequality.
  Because $\varphi(r)$ is assumed to be decreasing, we deduce that
  \begin{equation}
    \label{eq:intermqs0}
     \big|\E\{q(\bx_1)q(\bx_2)q(\bx_3)q(\bx_4)\} \big|\lesssim 
      \varphi(|\by_1-\by_3|) \E\{q^6\}^{\frac23}.
  \end{equation}
  
  If $\by_4$ is (one of) the closest point(s) to $\by_2$, then the
  same arguments show that 
  \begin{equation}
    \label{eq:intermqs}
  \big|\E\{q(\bx_1)q(\bx_2)q(\bx_3)q(\bx_4)\}\big| \lesssim
   \varphi(|\by_2-\by_4|)  \E\{q^6\}^{\frac23}.
  \end{equation}

  Otherwise, $\by_3$ is the closest point to $\by_2$, and we find that 
  \begin{displaymath}
    \big|\E\{q(\bx_1)q(\bx_2)q(\bx_3)q(\bx_4)\}\big| \lesssim
   \varphi(2|\by_2-\by_3|)  \E\{q^6\}^{\frac23}.
  \end{displaymath}
  However, by construction, $|\by_2-\by_4|\leq |\by_1-\by_2|\leq
  |\by_1-\by_3|+|\by_3-\by_2|\leq 2 |\by_2-\by_3|$, so
  \eqref{eq:intermqs} is still valid (this is the only place where the
  factor $2$ in \eqref{eq:strongmixing} is used).
  
  Combining \eqref{eq:intermqs0} and \eqref{eq:intermqs}, the result
  follows from $a\wedge b \leq (ab)^{\frac12}$ for $a,b\geq0$, where
  $a\wedge b=\min(a,b)$.
\end{proof}
\begin{lemma}
  \label{lem:boundgqgq}
  Let $q_\eps$ be a stationary process
  $q_\eps(\bx,\omega)=q(\frac{\bx}{\eps},\omega)$ with integrable
  correlation function in \eqref{eq:corrq}.  Let $f$ be a
  deterministic square integrable function on $D$. Then we have:
  \begin{equation}
    \label{eq:bdgqgf}
    \E\{ \|{\cal G}q_\eps{\cal G}f \|^2_{L^2(D)}\} 
   \lesssim \eps^d \|f\|^2_{L^2(D)}.
  \end{equation}

  Let $q_\eps$ satisfy one of the following additional hypotheses:
  \begin{itemize}
  \item[{\rm [H1]}] $q(\bx,\omega)$ is uniformly bounded $\P$-a.s.
  \item[{\rm [H2]}] $\E\{q^6\}<\infty$ and $q(\bx,\omega)$
    is strongly mixing with mixing coefficient in
    \eqref{eq:strongmixing} such that $\varphi^{\frac12}(r)$ is
    bounded and $r^{d-1}\varphi^{\frac12}(r)$ is integrable on
    $\Rm^+$.
  \end{itemize}
  Then we find that 
  \begin{equation}
    \label{eq:bdgqgq}
    \E\{ \|{\cal G}q_\eps{\cal G}q_\eps \|^2_{{\cal L}(L^2(D))}\} 
   \lesssim \eps^d.
  \end{equation}
\end{lemma}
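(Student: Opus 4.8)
The plan is to reduce every quantity to an integral of a two- or four-point correlation of $q$ against products of the Green's function $G$, and then to read off the factor $\eps^d$ from the integrability of either $R$ or the mixing coefficient $\varphi$. Throughout I set $h={\cal G}f$ (a fixed $L^2(D)$ function) and write $K(\bz_1,\bz_2):=\int_D G(\bx,\bz_1)G(\bx,\bz_2)\,d\bx$; by Cauchy--Schwarz, the embedding $L^{2+\eta}(D)\hookrightarrow L^2(D)$ and \eqref{eq:intgreen}, the kernel $K$ is bounded uniformly on $D\times D$. For \eqref{eq:bdgqgf} I would expand the second moment as
\[
\E\{\|{\cal G}q_\eps h\|^2_{L^2(D)}\}=\int_{D}\int_{D} K(\bz_1,\bz_2)\,R\Big(\tfrac{\bz_1-\bz_2}{\eps}\Big)\,h(\bz_1)h(\bz_2)\,d\bz_1\,d\bz_2,
\]
bound $K$ uniformly, use $|h(\bz_1)h(\bz_2)|\le\tfrac12(h^2(\bz_1)+h^2(\bz_2))$, and integrate $|R((\bz_1-\bz_2)/\eps)|$ over the difference variable to produce $\eps^d\|R\|_{L^1}$; what remains is $\|h\|^2_{L^2}\le\|{\cal G}\|^2_{{\cal L}(L^2)}\|f\|^2_{L^2}$, which is the claim.

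For \eqref{eq:bdgqgq} I would bound the operator norm of ${\cal G}q_\eps{\cal G}q_\eps$ by its Hilbert--Schmidt norm, whose kernel is $K_\eps(\bx,\bz_2)=q_\eps(\bz_2)\int_D G(\bx,\bz_1)q_\eps(\bz_1)G(\bz_1,\bz_2)\,d\bz_1$, so that $\E\{\|{\cal G}q_\eps{\cal G}q_\eps\|^2_{{\cal L}(L^2(D))}\}\le\int_D\int_D\E\{|K_\eps(\bx,\bz_2)|^2\}\,d\bx\,d\bz_2$. Under [H1] the crucial point is that the inner integral is \emph{linear} in $q_\eps$; bounding $q_\eps(\bz_2)^2\le\|q\|^2_\infty$ pointwise leaves only the two-point correlation $R((\bz_1-\bz_1')/\eps)$, and integrating first $\bx$ and $\bz_2$ against $G$ (each contributing a bounded $K$) and then the difference $\bz_1-\bz_1'$ against $R$ reproduces the bound $\eps^d$ exactly as before. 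No four-point information is needed in this case.

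The genuinely harder case is [H2], where $q$ is unbounded and $\E\{|K_\eps|^2\}$ involves the four-point moment $\E\{q_\eps(\bz_2)^2q_\eps(\bz_1)q_\eps(\bz_1')\}$. Here I would apply Lemma \ref{lem:mixingfourpoints} to the four points $\bz_2,\bz_2,\bz_1,\bz_1'$: according to whether the doubled point $\bz_2$ is or is not an extremity of the configuration, its bound collapses (up to $\E\{q^6\}^{2/3}$ and the bounded factor $\varphi^{\frac12}(0)$) to either $\varphi^{\frac12}(|\bz_1-\bz_1'|/\eps)$ or $\varphi^{\frac12}(|\bz_1-\bz_2|/\eps)\varphi^{\frac12}(|\bz_1'-\bz_2|/\eps)$, so the moment is dominated by the sum of these two profiles uniformly in the configuration. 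After integrating $\bx$ out into a bounded $K$, it remains to integrate each profile against $G(\bz_1,\bz_2)G(\bz_1',\bz_2)$ over $D^3$. The first profile integrates, via $\int_D G(\bz_1,\bz_2)G(\bz_1',\bz_2)\,d\bz_2=K(\bz_1,\bz_1')$ and $\int_{\Rm^d}\varphi^{\frac12}(|\bu|)\,d\bu<\infty$ (the integrability of $r^{d-1}\varphi^{\frac12}$ assumed in [H2]), to the borderline order $\eps^d$. The second, factorized profile I would estimate by Hölder's inequality with the conjugate pair $\big(2+\eta,\tfrac{2+\eta}{1+\eta}\big)$, pairing the $L^{2+\eta}$ bound \eqref{eq:intgreen} on $G(\cdot,\bz_2)$ against $\varphi^{\frac12}((\,\cdot-\bz_2)/\eps)\in L^{(2+\eta)/(1+\eta)}$; this yields the power $\eps^{2d(1+\eta)/(2+\eta)}=o(\eps^d)$, hence a contribution dominated by $\eps^d$.

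The hard part will be the second, fully connected profile, and it is precisely there that the hypothesis \eqref{eq:intgreen} is indispensable: the two factors $G(\bz_1,\bz_2)$ and $G(\bz_1',\bz_2)$ are evaluated simultaneously near their diagonal singularities while the mixing forces $\bz_1,\bz_1'$ to lie within $O(\eps)$ of $\bz_2$, and only the extra integrability $\eta>0$ makes the Hölder pairing finite and keeps this term below $\eps^d$. Once the two profiles are estimated, assembling the cases and tracking that all constants depend solely on $\|R\|_{L^1}$, $\E\{q^6\}$, $\int_0^\infty r^{d-1}\varphi^{\frac12}(r)\,dr$, $\sup_{\bz}\|G(\cdot,\bz)\|_{L^{2+\eta}}$ and $|D|$ yields \eqref{eq:bdgqgq}.
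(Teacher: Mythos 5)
Your proof is correct and follows essentially the same route as the paper's: the Cauchy--Schwarz (Hilbert--Schmidt) reduction of the operator norm, the two-point correlation bound for \eqref{eq:bdgqgf} and for case [H1], and the four-point bound of Lemma \ref{lem:mixingfourpoints} split into the same two profiles under [H2]. The only differences are cosmetic: you integrate $|R|$ and $\varphi^{\frac12}$ directly in physical space where the paper passes through the Fourier domain, and for the factorized profile you invoke the H\"older pairing $\big(2+\eta,\tfrac{2+\eta}{1+\eta}\big)$ --- the device the paper reserves for Lemma \ref{lem:gqgqgf}, giving the sharper $\eps^{2d(1+\eta)/(2+\eta)}$ --- whereas the paper's own proof of this lemma settles for Cauchy--Schwarz and the sufficient $O(\eps^d)$.
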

\begin{remark}
  \label{rem:varphi} \rm
  Note the assumption [H2] combined with $\varphi(r)$ decreasing
  together impose that $\varphi(r)=o(r^{-2d})$. For otherwise, we
  would have an increasing sequence $r_n\to\infty$ as $n\to\infty$
  such that $\varphi^{\frac12}(r_n)\geq C r_n^{-d}$ for some $C>0$,
  and then, since $\varphi^{\frac12}$ is also decreasing,
  \begin{displaymath}
    \dint_0^\infty r^{d-1}\varphi^{\frac12}(r) dr \geq\dsum_n
   \dint_{r_n}^{r_{n+1}} \dfrac{r^{d-1}dr}{r_{n+1}^d} 
   = \dsum_n \dfrac{r_{n+1}^d-r_n^d}{dr_{n+1}^d}
    \geq \dsum_n \dfrac{r_{n+1}-r_n}{dr_{n+1}}.
  \end{displaymath}
  Now if there is an infinite number of terms $n$ such that
  $r_{n+1}\geq2r_n$, then there is an infinite number of terms such
  that $\frac{r_{n+1}-r_n}{dr_{n+1}}\geq \frac1{2d}$ and the above sum
  is infinite. If there is a finite number of such terms, then for all
  $n\geq n_0$ for $n_0$ sufficiently large, we have $r_{n+1}\leq2r_n$
  so that
  \begin{displaymath}
    \dint_0^\infty r^{d-1}\varphi^{\frac12}(r) dr 
     \geq \dsum_{n\geq n_0} \dfrac{r_{n+1}-r_n}{dr_{n+1}}
     \geq \dsum_{n\geq n_0} \dfrac{r_{n+1}-r_n}{2dr_{n}}
     \geq \dfrac{1}{2d}\dint_{r_{n_0}}^\infty \dfrac{dx}x = +\infty.
  \end{displaymath}
  Our assumptions then impose that $\varphi(r)$ decay faster than
  $r^{-2d}$.
\end{remark}
\begin{proofof}[Lemma \ref{lem:boundgqgq}].
  We denote $\|\cdot\| = \|\cdot\|_{L^2(D)}$ and calculate
  \begin{displaymath}
    {\cal G} q_\eps {\cal G}f (\bx)
   = \dint_{D} \Big(\dint_{D} 
      G(\bx,\by) q_\eps(\by) G(\by,\bz) d\by\Big)  f(\bz) d\bz,
  \end{displaymath}
  so that by the Cauchy-Schwarz inequality, we have
  \begin{displaymath}
    |{\cal G} q_\eps {\cal G}f (\bx)|^2
    \leq \|f\|^2 \dint_{D} \Big( \dint_{D} 
      G(\bx,\by) q_\eps(\by) G(\by,\bz) d\by\Big)^2 d\bz .
  \end{displaymath}
  By definition of the correlation function, we thus find that
  \begin{equation}
    \label{eq:integralcorr0}
    \E\{ \|{\cal G}q_\eps{\cal G}f \|^2\} \lesssim \|f\|^2
   \dint_{D^4} G(\bx,\by) G(\bx,\bzeta)
      R\Big(\dfrac{\by-\bzeta}\eps\Big) G(\by,\bz)G(\bzeta,\bz) 
     d\bx d\by d\bzeta d\bz.
  \end{equation}
  Extending $G(\bx,\by)$ by $0$ outside $D\times D$, we find in the 
  Fourier domain that 
  \begin{displaymath}
    \E\{ \|{\cal G}q_\eps{\cal G}f \|^2\} \lesssim \|f\|^2 \dint_{D^2}
    \dint_{\Rm^d} |\widehat{G(\bx,\cdot)G(\bz,\cdot)}|^2(\bp)  \eps^d 
    \hat R(\eps\bp) d\bp d\bx d\bz.
  \end{displaymath}
  Here $\hat f(\bxi)=\int_{\Rm^d}e^{-i\bxi\cdot\bx}f(\bx)d\bx$ is the
  Fourier transform of $f(\bx)$.  Since $R(\bx)$ is integrable, then
  $\hat R(\eps\bp)$ (which is always non-negative by e.g. Bochner's
  theorem) is bounded by a constant we call $R_0$ so that
  \begin{displaymath}
    \E\{ \|{\cal G}q_\eps{\cal G}f \|^2\} \lesssim \|f\|^2 \eps^d R_0
     \dint_{D^3} G^2(\bx,\by)G^2(\bz,\by) d\bx d\by d\bz
     \lesssim  \|f\|^2 \eps^d R_0,
  \end{displaymath}
  by the square-integrability assumption on $G(\bx,\by)$.  This yields
  \eqref{eq:bdgqgf}.  Let us now consider \eqref{eq:bdgqgq}. We denote
  by $\|{\cal G} q_\eps {\cal G} q_\eps\|$ the norm $\|{\cal
    G}q_\eps{\cal G}q_\eps \|_{{\cal L}(L^2(D))}$ and calculate that
  \begin{displaymath}
    {\cal G} q_\eps {\cal G} q_\eps \phi (\bx)
   = \dint_{D} \Big(\dint_{D} 
      G(\bx,\by) q_\eps(\by) G(\by,\bz) d\by\Big)  q_\eps(\bz) \phi(\bz) d\bz.
  \end{displaymath}
  Therefore,
  \begin{displaymath}
    \Big({\cal G} q_\eps {\cal G} q_\eps \phi (\bx)\Big)^2  \leq
   \dint_D \Big(\dint_D 
      G(\bx,\by) q_\eps(\by) G(\by,\bz)q_\eps(\bz)  d\by\Big) ^2  d\bz \dint_D
      \phi^2(\bz)d\bz,  
  \end{displaymath}
  by Cauchy Schwarz. This shows that 
  \begin{displaymath}
    \|{\cal G} q_\eps {\cal G} q_\eps\|^2(\omega) \leq 
   \dint_{D^2} \Big(\dint_{D} 
      G(\bx,\by) q_\eps(\by) G(\by,\bz)  d\by\Big) ^2 q^2_\eps(\bz) d\bz d\bx.
  \end{displaymath}
  When $q_\eps(\bz,\omega)$ is bounded $\P-$a.s., the above proof
  leading to \eqref{eq:bdgqgf} applies and we obtain
  \eqref{eq:bdgqgq} under hypothesis [H1]. 

  Using Lemma \ref{lem:mixingfourpoints}, we obtain that
  \begin{displaymath}
    \E \{ q_\eps(\by)q_\eps(\bzeta)q_\eps^2(\bz)\}
    \lesssim \varphi^{\frac12}\Big(\dfrac{|\by-\bzeta|}{\eps}\Big)
    \varphi^{\frac12}(0)  + 
    \varphi^{\frac12}\Big(\dfrac{|\by-\bz|}{\eps}\Big)
    \varphi^{\frac12}\Big(\dfrac{|\bz-\bzeta|}{\eps}\Big).
  \end{displaymath}
  Under hypothesis [H2], we thus obtain that 
  \begin{displaymath}
   \begin{array}{rcl}
    \E \{\|{\cal G} q_\eps {\cal G} q_\eps\|^2 \}
   &\lesssim &\dint_{D^4} G(\bx,\by) G(\bx,\bzeta)
      \varphi^{\frac12}
   \Big(\dfrac{|\by-\bzeta|}\eps\Big) G(\by,\bz)G(\bzeta,\bz) d\by d\bzeta
    d\bx d\bz \\
    &&+ \dint_{D^2} \Big(\dint_D G(\bx,\by) 
    \varphi^{\frac12}\Big(\dfrac{|\by-\bz|}{\eps}\Big)
     G(\by,\bz)d\by\Big)^2 d\bx d\bz.
   \end{array}
  \end{displaymath}
  Because $r^{d-1}\varphi^{\frac12}(r)$ is integrable, then
  $\bx\mapsto\varphi^{\frac12}(|\bx|)$ is integrable as well and the
  bound of the first term above under hypothesis [H2] is done as in
  \eqref{eq:integralcorr0} by replacing $R(\bx)$ by
  $\varphi^{\frac12}(|\bx|)$. As for the second term, it is bounded,
  using the Cauchy Schwarz inequality, by
  \begin{displaymath}
    \dint_{D} \Big(\dint_{D} \Big( \dint_DG^2(\bx,\by)d\bx\Big)
     G^2(\by,\bz) d\by\Big) 
    \Big( \dint_{D}  \varphi
   \Big(\dfrac{|\by-\bz|}\eps\Big) d\by \Big)d\bz \lesssim \eps^d,
  \end{displaymath}
  since $\bx\mapsto\varphi(|\bx|)$ is integrable, $D$ is bounded, and
  \eqref{eq:intgreen} holds.
\end{proofof}
Applying the previous result to the process $\tilde
q_\eps(\bx,\omega)=q(\frac{\bx}\eps,\omega)$, we obtain from the
Chebyshev inequality that
\begin{equation}
  \label{eq:pbigradius}
  \P(\omega; \|{\cal G}\tilde q_\eps {\cal G}\tilde q_\eps\|>\rho)
   \lesssim \dfrac{\E\{\|{\cal G}\tilde q_\eps {\cal G}\tilde q_\eps\|^2\}}
  {\rho^2} \lesssim \eps^d.
\end{equation}
On the domain $\Omega_\eps\subset\Omega$ of measure
$\P(\Omega_\eps)\lesssim\eps^d$ where $\|{\cal G} \tilde q_\eps {\cal
  G} \tilde q_\eps\|>\rho$, we modify the potential $\tilde q_\eps$
and set it to e.g. $0$. We thus construct
\begin{equation}
  \label{eq:modpotqeps}
  q_\eps(\bx,\omega) = \left\{
    \begin{array}{ll}
        \tilde q_\eps(\bx,\omega) & \Omega\backslash\Omega_\eps, \\
        0 & \Omega_\eps.
    \end{array}\right.
\end{equation}
We have
\begin{lemma}
  \label{lem:qeps} The results obtained for 
  $\tilde q_\eps(\bx,\omega)=q(\frac{\bx}{\eps},\omega)$ in Lemma
  \ref{lem:boundgqgq} hold for $q_\eps(\bx,\omega)$ constructed in
  \eqref{eq:modpotqeps}.
\end{lemma}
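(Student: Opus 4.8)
The plan is to avoid reapplying Lemma \ref{lem:boundgqgq} to $q_\eps$ itself — which is not licit, since the truncation \eqref{eq:modpotqeps} destroys stationarity and hence the correlation-function computation underlying \eqref{eq:integralcorr0} — and instead to dominate $q_\eps$ pointwise in $\omega$ by the stationary process $\tilde q_\eps$, for which the two estimates \eqref{eq:bdgqgf} and \eqref{eq:bdgqgq} are already available. The single observation driving everything is that \eqref{eq:modpotqeps} sets the potential to zero on the \emph{realizations} $\omega\in\Omega_\eps$, so that
\begin{displaymath}
  q_\eps(\bx,\omega)=\tilde q_\eps(\bx,\omega)\,\mathbf 1_{\Omega\backslash\Omega_\eps}(\omega),
\end{displaymath}
i.e.\ for every $\omega\in\Omega_\eps$ the function $\bx\mapsto q_\eps(\bx,\omega)$ vanishes identically.

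First I would treat the linear quantity. Since ${\cal G}q_\eps{\cal G}f$ is linear in the potential, it is the zero element of $L^2(D)$ for each $\omega\in\Omega_\eps$ and coincides with ${\cal G}\tilde q_\eps{\cal G}f$ for $\omega\in\Omega\backslash\Omega_\eps$. Hence, realization by realization,
\begin{displaymath}
  \|{\cal G}q_\eps{\cal G}f\|^2_{L^2(D)}(\omega)
   = \|{\cal G}\tilde q_\eps{\cal G}f\|^2_{L^2(D)}(\omega)\,
     \mathbf 1_{\Omega\backslash\Omega_\eps}(\omega)
   \leq \|{\cal G}\tilde q_\eps{\cal G}f\|^2_{L^2(D)}(\omega),
\end{displaymath}
and taking expectations reduces \eqref{eq:bdgqgf} for $q_\eps$ to the already proven bound for $\tilde q_\eps$, with the same constant. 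The quadratic quantity is handled identically: ${\cal G}q_\eps{\cal G}q_\eps$ vanishes on $\Omega_\eps$ and equals ${\cal G}\tilde q_\eps{\cal G}\tilde q_\eps$ on $\Omega\backslash\Omega_\eps$, so that $\|{\cal G}q_\eps{\cal G}q_\eps\|^2(\omega)\leq\|{\cal G}\tilde q_\eps{\cal G}\tilde q_\eps\|^2(\omega)$ pointwise, and integrating and invoking \eqref{eq:bdgqgq} for $\tilde q_\eps$ yields the claim under either of the hypotheses [H1], [H2].

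There is essentially no obstacle here: the entire content is that truncating the potential onto $\Omega\backslash\Omega_\eps$ can only decrease the nonnegative integrands $\|\cdot\|^2(\omega)$, so the bounds survive the modification untouched. The only point worth flagging — and the reason this cannot be phrased as a one-line corollary of Lemma \ref{lem:boundgqgq} — is precisely that $q_\eps$ is no longer stationary, so the comparison must be carried out at the level of the operators for fixed $\omega$ rather than by re-running the argument that produced \eqref{eq:bdgqgf}--\eqref{eq:bdgqgq}.
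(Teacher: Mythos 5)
Your proof is correct and is essentially identical to the paper's: both arguments observe that $q_\eps$ vanishes identically on $\Omega_\eps$ and coincides with $\tilde q_\eps$ on its complement, so the relevant nonnegative quantities are pointwise dominated by those for $\tilde q_\eps$, and the bounds of Lemma \ref{lem:boundgqgq} carry over after taking expectations. The paper phrases this as a splitting of the expectation over $\Omega_\eps$ and $\Omega\backslash\Omega_\eps$ rather than as a pointwise inequality, but the content is the same.
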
 
\begin{proof}
  For instance,
  \begin{displaymath}
   \begin{array}{l}
    \E\{ \|{\cal G}q_\eps{\cal G}f \|^2 \}
    =\E\{ \chi_{\Omega^\eps}(\omega)\|{\cal G}q_\eps{\cal G}f \|^2\}
    + \E\{ \chi_{\Omega\backslash\Omega^\eps}(\omega)
     \|{\cal G}q_\eps{\cal G}f \|^2\}\\[2mm]
    = \E\{ \chi_{\Omega\backslash\Omega^\eps}(\omega)
     \|{\cal G}\tilde q_\eps{\cal G}f \|^2\} \leq
     \E\{\|{\cal G}\tilde q_\eps{\cal G}f \|^2\}\lesssim \eps^d \|f\|^2 .
   \end{array}
  \end{displaymath}
  The same proof holds for the second bound \eqref{eq:bdgqgq}.
\end{proof}
We also need to assume that the oscillatory integrals studied in
subsequent sections are not significantly modified when
$q(\frac{\bx}\eps,\omega)$ is replaced by the new
$q_\eps(\bx,\omega)$. We assume that
\begin{itemize}
\item[{\rm [H3]}] \hfill
  \begin{math}
   \lim\limits_{\eps\to0} \E \Big\{ \Big\| \dfrac{1}{\eps^{\frac d2}}
    \Big(q\Big(\dfrac{\bx}{\eps},\omega\Big)
   - q_\eps(\bx,\omega)\Big)\Big\| \Big\}
   \equiv\lim\limits_{\eps\to0} \E \Big\{ \chi_{\Omega^\eps}(\omega)
    \Big\|\dfrac{1}{\eps^{\frac d2}}
  q\Big(\dfrac{\bx}{\eps},\omega\Big)\Big\| \Big\}  =0.
  \end{math} \hfill $\mbox{}$
\end{itemize}
Note that such a condition is automatically satisfied when
$\eps^{-\alpha d}q(\bx,\omega)$ is bounded $\P$-a.s for $0\leq
\alpha<\frac 12$.

With the modified potential, \eqref{eq:intHelm} admits a unique
solution $\P$-a.s. and we find that 
\begin{equation}
  \label{eq:bduepsf}
  \|u_\eps\|(\omega) \lesssim \|{\cal G}f\| 
    + \|{\cal G} q_\eps {\cal G} f\| \quad \P-\mbox{ a.s.},
\end{equation}
where $\|\cdot\|$ denotes $L^2(D)$ norm. 
Using the first result of Lemma \ref{lem:boundgqgq}, we find that 
\begin{equation}
  \label{eq:l2bdueps}
  \E \{ \|u_\eps\|^2 \} \lesssim \|f\|^2.
\end{equation}

Now we can address the behavior of the correctors.  We define
\begin{equation}
  \label{eq:homog}
  u_0= {\cal G} f,
\end{equation}
the solution of the unperturbed problem. We find that 
\begin{equation}
   \label{eq:expansioncorr}
  (I-{\cal G}q_\eps {\cal G}q_\eps) (u_\eps - u_0) 
   = -{\cal G}q_\eps {\cal G} f +
    {\cal G}q_\eps {\cal G}q_\eps {\cal G} f.  
\end{equation}
Using the results of Lemma \ref{lem:boundgqgq}, we obtain that 
\begin{lemma}
  \label{lem:estcorrHelm}
  Let $u_\eps$ be the solution to the heterogeneous problem
  \eqref{eq:Helmtype} and $u_0$ the solution to the corresponding
  homogenized problem. Then we have that
  \begin{equation}
  \label{eq:bduepsmu0}
  \big(\E \{ \| u_\eps - u_0\|^2 \}\big)^{\frac12} \
    \lesssim \eps^{\frac d2} \|f\|.
\end{equation}
\end{lemma}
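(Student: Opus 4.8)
The plan is to invert the identity \eqref{eq:expansioncorr} and then control each of the two source terms on its right-hand side in $L^2(\Omega\times D)$ by appealing to Lemma \ref{lem:boundgqgq}. First I would record that, by the very construction of the modified potential in \eqref{eq:modpotqeps}, the operator ${\cal G}q_\eps{\cal G}q_\eps$ has $L^2(D)$ operator norm at most $\rho<1$ almost surely: this holds by definition of $\Omega_\eps$ on its complement, and trivially on $\Omega_\eps$ where $q_\eps\equiv0$. Hence $(I-{\cal G}q_\eps{\cal G}q_\eps)$ is boundedly invertible $\P$-a.s.\ with
\begin{equation*}
  \big\|(I-{\cal G}q_\eps{\cal G}q_\eps)^{-1}\big\|_{{\cal L}(L^2(D))}\leq\frac{1}{1-\rho}
\end{equation*}
via the Neumann series. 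Applying this inverse to \eqref{eq:expansioncorr} and using $(a+b)^2\leq2(a^2+b^2)$ reduces the claim to the two mean-square estimates
\begin{equation*}
  \E\{\|{\cal G}q_\eps{\cal G}f\|^2\}\lesssim\eps^d\|f\|^2
  \qquad\text{and}\qquad
  \E\{\|{\cal G}q_\eps{\cal G}q_\eps{\cal G}f\|^2\}\lesssim\eps^d\|f\|^2.
\end{equation*}

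The first of these is immediate from the bound \eqref{eq:bdgqgf} of Lemma \ref{lem:boundgqgq}, which transfers to $q_\eps$ by Lemma \ref{lem:qeps}. For the second term, the point I would be careful about is that the naive almost-sure estimate $\|{\cal G}q_\eps{\cal G}q_\eps{\cal G}f\|\leq\rho\|{\cal G}f\|\lesssim\|f\|$ is only $O(1)$ and hence useless. Instead I would factor ${\cal G}q_\eps{\cal G}q_\eps{\cal G}f=({\cal G}q_\eps{\cal G}q_\eps)({\cal G}f)$, use that ${\cal G}$ is bounded on $L^2(D)$ so that $\|{\cal G}f\|\lesssim\|f\|$ deterministically (already exploited in \eqref{eq:bduepsf}), and then estimate the operator norm in the \emph{mean-square} sense:
\begin{equation*}
  \E\{\|{\cal G}q_\eps{\cal G}q_\eps{\cal G}f\|^2\}
  \leq\|{\cal G}f\|^2\,\E\{\|{\cal G}q_\eps{\cal G}q_\eps\|^2_{{\cal L}(L^2(D))}\}
  \lesssim\eps^d\|f\|^2,
\end{equation*}
where the last step is exactly the operator estimate \eqref{eq:bdgqgq}. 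Adding the two contributions gives $\E\{\|u_\eps-u_0\|^2\}\lesssim\eps^d\|f\|^2$, and taking square roots yields \eqref{eq:bduepsmu0}.

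I expect the whole difficulty to be concentrated in that last observation: one must resist using the crude almost-sure bound $\rho$, which only serves to guarantee invertibility, and instead invoke the sharper quantitative decay $\E\{\|{\cal G}q_\eps{\cal G}q_\eps\|^2\}\lesssim\eps^d$ of Lemma \ref{lem:boundgqgq}. Given that estimate and the $L^2(D)$-boundedness of ${\cal G}$, the remainder of the argument is a one-line application of the triangle inequality together with the uniform control of the resolvent.
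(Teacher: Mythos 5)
Your proposal is correct and follows essentially the same route as the paper, which states the lemma as a direct consequence of Lemma \ref{lem:boundgqgq} applied to the identity \eqref{eq:expansioncorr}: you have simply spelled out the implicit steps, namely the $\P$-a.s.\ invertibility of $(I-{\cal G}q_\eps{\cal G}q_\eps)$ with resolvent norm at most $(1-\rho)^{-1}$ guaranteed by the construction \eqref{eq:modpotqeps}, the bound \eqref{eq:bdgqgf} (transferred to the modified potential via Lemma \ref{lem:qeps}) for the first source term, and the mean-square operator bound \eqref{eq:bdgqgq} applied to the deterministic function ${\cal G}f$ for the second. Your remark about avoiding the useless almost-sure bound $\rho$ in favor of $\E\{\|{\cal G}q_\eps{\cal G}q_\eps\|^2\}\lesssim\eps^d$ is exactly the point, and note that the sharper estimate of Lemma \ref{lem:gqgqgf} is not needed here --- it only becomes necessary later, for the refined $O(\eps^d)$ control of the remainder in \eqref{eq:bduuepscorr}.
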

Note that if we write $u_\eps=A_\eps f$ and $u_0=A_0 f$, with $A_\eps$
and $A_0$ the solution operators of the heterogeneous and homogeneous
equations, respectively, then we have just shown that
\begin{equation}
  \label{eq:bdaepshelm}
  \E\{\|A_\eps-A_0\|^2\} \lesssim \eps^{d}.
\end{equation}

Now ${\cal G}q_\eps {\cal G}q_\eps(u_\eps - u_0)$ is bounded by
$\eps^d$ in $L^1(\Omega;L^2(D))$ by Cauchy-Schwarz:
\begin{displaymath}
  \E\{\|{\cal G}q_\eps {\cal G}q_\eps(u_\eps - u_0)\|\}
  \leq \Big( \E\{\|{\cal G}q_\eps {\cal G}q_\eps\|^2\}\Big)^{\frac12}
   \Big( \E\{\|u_\eps - u_0 \|^2\}\Big)^{\frac12} \lesssim \eps^d.
\end{displaymath}

We need the following estimate:
\begin{lemma}
  \label{lem:gqgqgf}
  Under hypothesis {\rm [H2]} of Lemma \ref{lem:boundgqgq}, we find that 
  \begin{equation}
    \label{eq:estimgqgqgf}
    \E\{\|{\cal G}q_\eps{\cal G}q_\eps{\cal G}f\|^2\}
     \lesssim \eps^{2d\frac{1+\eta}{2+\eta}} \|f\|^2
     \ll \eps^d \|f\|^2,
  \end{equation}
  where $\eta$ is such that $\by\mapsto \Big(\dint_D
  |G|^{2+\eta}(\bx,\by)d\bx\Big)^{\frac1{2+\eta}}$ is uniformly
  bounded on $D$.
\end{lemma}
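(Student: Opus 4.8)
The plan is to reduce to the unmodified potential, expand the squared $L^2$ norm into a four‑point correlation, and control it pairing by pairing, using the four‑point bound of Lemma~\ref{lem:mixingfourpoints} together with the integrability \eqref{eq:intgreen} of $G$. Since $q_\eps$ vanishes on $\Omega_\eps$ and equals $\tilde q_\eps$ elsewhere, exactly as in Lemma~\ref{lem:qeps} one has $\E\{\|{\cal G}q_\eps{\cal G}q_\eps{\cal G}f\|^2\}\le\E\{\|{\cal G}\tilde q_\eps{\cal G}\tilde q_\eps{\cal G}f\|^2\}$, so I work throughout with $\tilde q_\eps=q(\frac\bx\eps,\cdot)$. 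Writing ${\cal G}\tilde q_\eps{\cal G}\tilde q_\eps g(\bx)=\int_{D^2}G(\bx,\by)G(\by,\bz)\tilde q_\eps(\by)\tilde q_\eps(\bz)g(\bz)\,d\by\,d\bz$ with $g:={\cal G}f$ and squaring produces a fivefold integral over $(\bx,\by_1,\bz_1,\by_2,\bz_2)$ weighted by $\E\{\tilde q_\eps(\by_1)\tilde q_\eps(\bz_1)\tilde q_\eps(\by_2)\tilde q_\eps(\bz_2)\}$. By Lemma~\ref{lem:mixingfourpoints}, rescaled by $1/\eps$, this moment is bounded up to a constant by the sum over the three pairings of $\{\by_1,\bz_1,\by_2,\bz_2\}$ of the product of the two within‑pair factors $\varphi^{\frac12}(|\cdot-\cdot|/\eps)$.

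A preliminary observation drives the whole estimate: $g={\cal G}f$ lies in $L^\infty(D)$ with $\|g\|_{L^\infty}\lesssim\|f\|$. Indeed, by H\"older's inequality and \eqref{eq:intgreen},
\begin{displaymath}
  |g(\bz)|=\Big|\dint_D G(\bz,\by)f(\by)\,d\by\Big|
  \le\|G(\bz,\cdot)\|_{L^{2+\eta}(D)}\,\|f\|_{L^{\frac{2+\eta}{1+\eta}}(D)}
  \lesssim\|f\|,
\end{displaymath}
the last step using $\frac{2+\eta}{1+\eta}\le2$ and the boundedness of $D$. This $L^\infty$ bound is what makes the exponent sharp: pulling $g$ out in $L^2$ by a Cauchy--Schwarz against $g$ is too wasteful and yields only $\eps^{d\eta/(2+\eta)}$, a larger quantity than $\eps^{2d\frac{1+\eta}{2+\eta}}$ for every $\eta>0$.

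The single building block I would then prove is, uniformly in $\bu,\bp\in D$,
\begin{displaymath}
  \dint_D G(\bu,\bv)\,\varphi^{\frac12}\!\Big(\frac{|\bv-\bp|}\eps\Big)\,d\bv
  \le\|G(\bu,\cdot)\|_{L^{2+\eta}(D)}
  \Big(\dint_D\varphi^{\frac{2+\eta}{2(1+\eta)}}\big(\tfrac{|\bv-\bp|}\eps\big)d\bv\Big)^{\frac{1+\eta}{2+\eta}}
  \lesssim\eps^{d\frac{1+\eta}{2+\eta}},
\end{displaymath}
obtained by H\"older's inequality; the $\varphi$‑integral scales like $\eps^d$ because $\frac{2+\eta}{2(1+\eta)}\ge\frac12$ and $r^{d-1}\varphi^{\frac12}(r)$ is integrable by [H2]. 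The same bound holds with $\varphi^{\frac12}$ replaced by $\varphi$, and one also has the crude $\int_D G(\bu,\bv)\,d\bv\lesssim1$. The point is that each factor $\varphi^{\frac12}$, once integrated together with one neighbouring Green's function, contributes a gain $\eps^{d\frac{1+\eta}{2+\eta}}$ whatever point it is centred at.

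It remains to bound the three pairings, using $|g|\le\|g\|_{L^\infty}\lesssim\|f\|$. The diagonal pairing $(\by_1\bz_1)(\by_2\bz_2)$ factorizes as $\|{\cal G}\psi\|_{L^2}^2$ with $\psi(\by)=\int_D G(\by,\bz)\varphi^{\frac12}(|\by-\bz|/\eps)|g(\bz)|\,d\bz$; the building block gives $\|\psi\|_{L^\infty}\lesssim\|f\|\,\eps^{d\frac{1+\eta}{2+\eta}}$, whence, as $D$ is bounded and ${\cal G}$ acts boundedly on $L^2$, a contribution $\lesssim\eps^{2d\frac{1+\eta}{2+\eta}}\|f\|^2$. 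The two crossed pairings $(\by_1\by_2)(\bz_1\bz_2)$ and $(\by_1\bz_2)(\bz_1\by_2)$ do not factorize; here I would integrate the five variables in a suitable order (e.g.\ $\bz_1,\bz_2,\by_2,\by_1,\bx$ in the first case) so that exactly two of the integrations each pair a factor $\varphi^{\frac12}$ with the single Green's function still attached to that variable, extracting $\eps^{d\frac{1+\eta}{2+\eta}}$ apiece, while the other three integrations cost only $O(1)$. Each crossed pairing is again $\lesssim\eps^{2d\frac{1+\eta}{2+\eta}}\|f\|^2$, and summing the three contributions proves \eqref{eq:estimgqgqgf}; the final $\ll\eps^d$ is just $2\frac{1+\eta}{2+\eta}>1$ for $\eta>0$. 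The main obstacle is precisely these crossed pairings: the order of integration must be chosen so that each mixing factor is absorbed jointly with a singular Green's function, which is where the two independent gains—and with them the precise exponent $\eta$ of \eqref{eq:intgreen}—enter.
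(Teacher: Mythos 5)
Your proof is correct, and it reaches the bound by a somewhat different route than the paper, even though both arguments share the same skeleton (expand the square, invoke the four\nobreakdash-point mixing bound of Lemma~\ref{lem:mixingfourpoints}, and control three pairings). The paper applies Cauchy--Schwarz so as to keep $f$ in $L^2$ together with \emph{three} Green's kernels per chain, producing an integral over $D^6$; it then treats the three pairings by three distinct devices (a Fourier/Bochner argument for $A_1$, repeated use of $2ab\le a^2+b^2$ for $A_3$, and the H\"older pairing of $G\in L^{2+\eta}$ against $\varphi^{1/2}\in L^{(2+\eta)/(1+\eta)}$ only for $A_2$), with the consequence that $A_1$ and $A_3$ are actually $O(\eps^{2d})$ and only $A_2$ is responsible for the exponent $2d\frac{1+\eta}{2+\eta}$. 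You instead absorb ${\cal G}f$ into an $L^\infty$ bound via \eqref{eq:intgreen} (a legitimate step, since $\frac{2+\eta}{1+\eta}\le 2$ and $D$ is bounded), which leaves only two Green's kernels per chain over $D^5$, and then you run a single uniform building-block estimate --- the same H\"older pairing the paper uses for $A_2$ --- on all three pairings through an appropriate ordering of the integrations. What your version buys is economy and uniformity: one elementary estimate suffices, no Fourier transform is needed, and the origin of the exponent is transparent. What it gives up is the finer information that two of the three pairings are in fact $O(\eps^{2d})$; since the statement only claims the maximum of the three rates, nothing is lost for the lemma itself, but the paper's sharper accounting of $A_1$ and $A_3$ would matter if one wanted to isolate the genuinely dominant contribution. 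Your side remark about why a plain $L^2$ extraction of $g$ would be too lossy is accurate and correctly identifies the role of the $L^\infty$ bound in recovering the exponent.
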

\begin{proof}
  By Cauchy Schwarz,
  \begin{displaymath}
    |{\cal G}q_\eps{\cal G}q_\eps{\cal G}f (\bx)|^2
    \leq \|f\|^2 \dint_D \Big(\dint_{D^2} G(\bx,\by) q_\eps(\by)G(\by,\bz)
    q_\eps(\bz) G(\bz,\bt)d\by d\bz\Big)^2 d\bt.
  \end{displaymath}
  So we want to estimate
  \begin{displaymath}
    A=\E\{ \dint_{D^6} G(\bx,\by)G(\bx,\bzeta)q_\eps(\by)q_\eps(\bzeta)
    G(\by,\bz)G(\bzeta,\bxi) q_\eps(\bz)q_\eps(\bxi)
    G(\bz,\bt) G(\bxi,\bt) d[\bxi\bzeta\by\bz\bx\bt]\}.
  \end{displaymath}
  We now use \eqref{eq:bdq1234} to obtain that $A\lesssim
  A_1+A_2+A_3$, where
  \begin{displaymath}
   \begin{array}{l}
   A_1=\dint_{D^6} G(\bx,\by)G(\bx,\bzeta) \varphi^{\frac12}
    \Big(\dfrac{|\by-\bzeta|}{\eps}\Big)
    G(\by,\bz)G(\bzeta,\bxi) \varphi^{\frac12}
    \Big(\dfrac{|\bz-\bxi|}{\eps}\Big)
    G(\bz,\bt) G(\bxi,\bt) d[\bxi\bzeta\by\bz\bx\bt],\\
   A_2=\dint_{D^2} \Big(\dint_{D^2} G(\bx,\by) G(\by,\bz)
      \varphi^{\frac12} \Big(\dfrac{|\by-\bz|}{\eps}\Big)
    G(\bz,\bt)d\by d\bz\Big)^2 d\bt d\bx,\\
   A_3=\dint_{D^6} G(\bx,\by)G(\bxi,\bt)G(\bx,\bzeta) G(\bz,\bt)
    \varphi^{\frac12}\Big(\dfrac{|\by-\bxi|}{\eps}\Big)
    G(\by,\bz)G(\bzeta,\bxi) \varphi^{\frac12}
    \Big(\dfrac{|\bzeta-\bz|}{\eps}\Big)
      d[\bxi\bzeta\by\bz\bx\bt].
   \end{array}
  \end{displaymath}
  Denote $F_{\bx,\bt}(\by,\bz)=G(\bx,\by)G(\by,\bz)G(\bz,\bt)$. Then in the
  Fourier domain, we find that 
  \begin{displaymath}
    A_1\lesssim \dint_{D^2}\dint_{\Rm^{2d}}
     \eps^{2d} \widehat{\varphi^{\frac12}}(\eps\bp)
     \widehat{\varphi^{\frac12}}(\eps\bq) 
     |\hat F_{\bx,\bt}(\bp,\bq)|^2 d\bp d\bq
    d\bx d\bt.
  \end{displaymath}
  Here $\widehat{\varphi^{\frac12}}(\bp)$ is the Fourier transform of
  $\bx\mapsto\varphi^{\frac12}(|\bx|)$.  Since
  $\widehat{\varphi^{\frac12}}(\eps\bp)$ is bounded because
  $r^{d-1}\varphi^{\frac12}(r)$ is integrable on $\Rm^+$, we deduce
  that
  \begin{displaymath}
    A_1 \lesssim \eps^{2d} \dint_{D^4}
    G^2(\bx,\by)G^2(\by,\bz)G^2(\bz,\bt) d\bx d\by d\bz d\bt
    \lesssim \eps^{2d},
  \end{displaymath}
  using the integrability condition imposed on $G(\bx,\by)$.

  Using $2ab\leq a^2+b^2$ for $(a,b)=(G(\bx,\by),G(\bx,\bzeta))$ and
  $(a,b)=(G(\bxi,\bt),G(\bz,\bt))$ successively, and integrating
  in $\bt$ and $\bx$, we find that
  \begin{displaymath}
    A_3 \lesssim \dint_{D^4}G(\by,\bz)G(\bzeta,\bxi)
       \varphi^{\frac12}\Big(\dfrac{|\by-\bxi|}{\eps}\Big)
   \varphi^{\frac12} \Big(\dfrac{|\bzeta-\bz|}{\eps}\Big) 
    d[\by\bzeta\bz\bxi],
  \end{displaymath}
  thanks to \eqref{eq:intgreen}. Now with
  $(a,b)=(G(\by,\bz),G(\bzeta,\bxi))$, we find that
  \begin{displaymath}
    A_3 \lesssim \dint_{D^4} G^2(\by,\bz)
    \varphi^{\frac12}\Big(\dfrac{|\by-\bxi|}{\eps}\Big)
   \varphi^{\frac12} \Big(\dfrac{|\bzeta-\bz|}{\eps}\Big) 
    d[\by\bzeta\bz\bxi] \lesssim \eps^{2d},
  \end{displaymath}
  since $\varphi^{\frac12}$ is integrable and $G$ is square integrable
  on the bounded domain $D$.

  
  Let us now consider the contribution $A_2$. We write the squared
  integral as a double integral over the variables
  $(\by,\bzeta,\bz,\bxi)$ and dealing with the integration in $\bx$
  and $\bt$ using $2ab\leq a^2+b^2$ as in the $A_3$ contribution,
  obtain that
  \begin{displaymath}
    A_2 \lesssim \dint_{D^4} 
    G(\by,\bzeta)\varphi^{\frac12}\Big(\dfrac{|\by-\bzeta|}{\eps}\Big)
    G(\bz,\bxi)\varphi^{\frac12}\Big(\dfrac{|\bz-\bxi|}{\eps}\Big)
    d[\by\bzeta\bz\bxi].
  \end{displaymath}
  Using H\"older's inequality, we obtain that
  \begin{displaymath}
    A_2 \lesssim \Big(\Big(
    \dint_0^\infty \varphi^{\frac{p'}2} \Big(\dfrac{r}{\eps}\Big)r^{d-1}
     dr\Big)^{\frac{1}{p'}}
    \Big(\dint_{D^2} G^p(\by,\bz) d\by d\bz \Big)^{\frac 1p}\Big)^2
    \lesssim \eps^{2d\frac{1+\eta}{2+\eta}},
  \end{displaymath}
  with $p=2+\eta$ and $p'=\frac{2+\eta}{1+\eta}$ since
  $\varphi^{\frac12}(r)r^{d-1}$, whence
  $\varphi^{\frac{p'}2}(r)r^{d-1}$, is integrable.
\end{proof}
The above lemma applies to the stationary process $\tilde
q_\eps(\bx,\omega)$, and using the same proof as in Lemma
\ref{lem:qeps}, for the modified process $q_\eps(\bx,\omega)$ in
\eqref{eq:modpotqeps}.  We have therefore obtained that
\begin{equation}
  \label{eq:bduuepscorr}
  \E\{\|u_\eps-u+{\cal G}q_\eps{\cal G}f \|\} \lesssim \eps^d.
\end{equation}
For what follows, it is useful to recast the above result as:
\begin{proposition}
  \label{prop:strongcv}
  Let $q(\bx,\omega)$ be constructed so that [H2]-[H3] holds and let
  $q_\eps(\bx,\omega)$ be as defined in \eqref{eq:modpotqeps}. Let
  $u_\eps$ be the solution to \eqref{eq:intHelm} and $u_0={\cal G}f$.
  We assume that $u_0$ is continuous on $D$. Then we have the
  following strong convergence result:
  \begin{equation}
    \label{eq:limuuepscorr}
    \lim\limits_{\eps\to0} \E\Big\{\Big\|\dfrac{u_\eps-u_0}{\eps^{\frac d2}}
    + \dfrac{1}{\eps^{\frac d2}} {\cal G}q\Big(\dfrac{\cdot}\eps,\omega\Big)
    u_0 \Big\|\Big\} =0.
  \end{equation}
\end{proposition}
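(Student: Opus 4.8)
The plan is to combine the strong corrector estimate \eqref{eq:bduuepscorr} with hypothesis [H3], using that ${\cal G}$ is bounded on $L^2(D)$ and that $u_0\in L^\infty(D)$. First I would rewrite \eqref{eq:bduuepscorr}: since $u=u_0={\cal G}f$ and hence ${\cal G}q_\eps{\cal G}f={\cal G}q_\eps u_0$, that estimate reads
\begin{displaymath}
  \E\{\|u_\eps-u_0+{\cal G}q_\eps u_0\|\}\lesssim\eps^d.
\end{displaymath}
Dividing by $\eps^{\frac d2}$ gives
\begin{displaymath}
  \E\Big\{\Big\|\dfrac{u_\eps-u_0}{\eps^{\frac d2}}+\dfrac1{\eps^{\frac d2}}{\cal G}q_\eps u_0\Big\|\Big\}\lesssim\eps^{\frac d2},
\end{displaymath}
whose right-hand side vanishes as $\eps\to0$. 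This is already \eqref{eq:limuuepscorr}, but with the modified potential $q_\eps$ in place of $q(\frac{\cdot}{\eps},\omega)=\tilde q_\eps$, so the only remaining task is to swap $q_\eps$ for $\tilde q_\eps$ inside the ${\cal G}$ term.

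For that replacement I would write
\begin{displaymath}
  \dfrac1{\eps^{\frac d2}}{\cal G}\tilde q_\eps u_0-\dfrac1{\eps^{\frac d2}}{\cal G}q_\eps u_0=\dfrac1{\eps^{\frac d2}}{\cal G}(\tilde q_\eps-q_\eps)u_0,
\end{displaymath}
and estimate its $L^2(D)$ norm. Because $P(\bx,\bD)$ is invertible with kernel $G$ satisfying \eqref{eq:intgreen}, Cauchy--Schwarz together with \eqref{eq:intgreen} shows that ${\cal G}$ maps $L^2(D)$ into $L^\infty(D)$; in particular ${\cal G}$ is bounded on $L^2(D)$ (as $D$ is bounded) and $u_0={\cal G}f\in L^\infty(D)$. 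Consequently
\begin{displaymath}
  \Big\|\dfrac1{\eps^{\frac d2}}{\cal G}(\tilde q_\eps-q_\eps)u_0\Big\|\lesssim\dfrac1{\eps^{\frac d2}}\|(\tilde q_\eps-q_\eps)u_0\|\leq\|u_0\|_{L^\infty(D)}\Big\|\dfrac1{\eps^{\frac d2}}(\tilde q_\eps-q_\eps)\Big\|,
\end{displaymath}
and taking expectations makes the right-hand side tend to $0$ by [H3].

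Combining the two bounds via the triangle inequality---the expectation in \eqref{eq:limuuepscorr} is dominated by the sum of the two expectations estimated above---then yields the claim, since both of those vanish as $\eps\to0$. I expect no serious obstacle: the argument is a clean assembly of \eqref{eq:bduuepscorr}, the $L^2$-boundedness of ${\cal G}$, the bound $u_0\in L^\infty(D)$, and [H3]. The one genuinely delicate point is the passage from the modified potential $q_\eps$ to $\tilde q_\eps=q(\frac{\cdot}{\eps},\omega)$, which is precisely what hypothesis [H3] was introduced to supply; notice that continuity of $u_0$ is not actually needed for this step (boundedness suffices and holds automatically via \eqref{eq:intgreen}), though it is presumably required for the central limit arguments in the sequel.
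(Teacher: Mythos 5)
Your proposal is correct and follows essentially the same route as the paper: it combines the estimate \eqref{eq:bduuepscorr} (divided by $\eps^{\frac d2}$) with hypothesis [H3], bounding the swap from $q_\eps$ to $q(\frac{\cdot}{\eps},\omega)$ via $\|{\cal G}\|\,\|u_0\|_{L^\infty(D)}$ times the [H3] expectation. Your observation that $u_0\in L^\infty(D)$ already follows from \eqref{eq:intgreen} (so continuity is only needed later, for the oscillatory-integral limits) is a small but accurate refinement of the paper's argument.
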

\begin{proof}
  Thanks to hypothesis [H3], we may replace $q_\eps(\bx,\omega)$ by
  $\tilde q_\eps(\bx,\omega)=q(\frac{\bx}\eps,\omega)$ in
  \eqref{eq:bduuepscorr} up to a small error compared to $\eps^{\frac
    d2}$. Indeed,
  \begin{displaymath}
    \begin{array}{l}
    \E\Big\{  \Big\| \dfrac{1}{\eps^{\frac d2}} {\cal G} 
     \Big( q\Big(\dfrac{\cdot}\eps,\omega\Big)- q_\eps(\cdot,\omega)\Big)
    u_0 \Big\|\Big\} = \E \Big\{  \chi_{\Omega^\eps}(\omega)
    \Big\|\dfrac{1}{\eps^{\frac d2}} {\cal G} 
     q\Big(\dfrac{\cdot}\eps,\omega\Big) u_0 \Big\|\Big\}\\
    \leq \|{\cal G} \| \|u_0\|_{L^\infty(D)} 
    \E \Big\{ \chi_{\Omega^\eps}(\omega) 
     \Big\|\dfrac{1}{\eps^{\frac d2}} q\Big(\dfrac{\cdot}\eps,\omega\Big)
      \Big\|\Big\} \ll 1.
     \end{array}
  \end{displaymath}
\end{proof}
The rescaled corrector $\eps^{-\frac d2}{\cal
  G}q(\frac{\cdot}\eps,\omega) u_0$ does not converge strongly to its
limit. Rather, it should be interpreted as a stochastic oscillatory
integral whose limiting distribution is governed by the central limit
theorem \cite{breiman,EK-86}. We consider such limits first in the
one-dimensional case and second for arbitrary space dimensions.

\subsection{Oscillatory integral in one space dimension}
\label{sec:helm1d}

In dimension $d=1$, the leading term of the corrector
$\eps^{-\frac12}(u_\eps-u_0)$ is thus given by:
\begin{equation}
  \label{eq:u1eps1dhelm}
  u_{1\eps}(x,\omega)=
  \dint_D -G(x,y)  \dfrac{1}{\sqrt\eps}q(\frac y\eps,\omega) u_0(y) dy,
\end{equation}
where $D$ is an interval $(a,b)$. The convergence is more precise in
dimension $d=1$ than in higher space dimensions. For the Helmholtz
equation, the Green function in $d=1$ is Lipschitz continuous and we
will assume this regularity for the rest of the section; see the next
section for less regular Green's functions. Then $u_{1\eps}(x,\omega)$
is of class ${\cal C}(D)$ $\P$-a.s. and we can seek convergence in
that functional class. Since $u_0={\cal G}f$, it is continuous
for $f\in L^2(D)$.

The variance of the random variable $u_{1\eps}(x,\omega)$ is given by
\begin{equation}
  \label{eq:varu11dhelm}
  \E\{u_{1\eps}^2(x,\omega)\} = \dint_{D^2} G(x,y)G(x,z) 
   \dfrac 1\eps R\Big(\dfrac{y-z}\eps\Big) u_0(y)u_0(z) dydz.
\end{equation}
Because $R(x)$ is assumed to be integrable, the above integral
converges, as $\eps\to0$, to the following limit:
\begin{equation}
  \label{eq:varu1lim1dhelm}
  \E\{u_1^2(x,\omega)\} = \dint_D G^2(x,y) \hat R(0) u_0^2(y) dy,
\end{equation}
where 
\begin{equation}
  \label{eq:hatR0}
  \hat R(0) = \sigma^2 := \dint_{-\infty}^\infty R(r) dr 
   = 2\dint_0^\infty \E\{q(0)q(r)\}dr.
\end{equation}

Because \eqref{eq:u1eps1dhelm} is an average of random variables
decorrelating sufficiently fast, we expect a central limit-type result
to show that $u_{1\eps}(x,\omega)$ converges to a Gaussian random
variable. Combined with the variance \eqref{eq:hatR0}, we expect the
limit to be the following stochastic integral:
\begin{equation}
  \label{eq:u11dhelm}
  u_1(x,\omega) = -\sigma \dint_D G(x,y) u_0(y) dW_y(\omega),
\end{equation}
where $dW_y(\omega)$ is standard white noise on $({\cal C}(D),{\cal
  B}({\cal C}(D)),\P)$ \cite{billingsley1}. More precisely, we show the
following result:
\begin{theorem}
  \label{thm:cv1dhelm}
  Let us assume that $G(x,y)$ is Lipschitz continuous. Then, under the
  conditions of Proposition \ref{prop:strongcv}, the process
  $u_{1\eps}(x,\omega)$ converges weakly and in distribution in the
  space of continuous paths ${\cal C}(D)$ to the limit $u_1(x,\omega)$
  in \eqref{eq:u11dhelm}. As a consequence, the corrector to
  homogenization satisfies that
  \begin{equation}
    \label{eq:cvueps1dhelm}
    \dfrac{u_\eps-u_0}{\sqrt\eps}(x) \xrightarrow{\,\rm dist. \,}
    -\sigma \dint_D G(x,y) u_0(y) dW_y,\quad \mbox{ as }\eps\to0,
  \end{equation}
  in the space of integrable paths $L^1(D)$.
\end{theorem}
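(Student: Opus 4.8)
The plan is to establish weak convergence of $u_{1\eps}$ in ${\cal C}(D)$ by the two classical ingredients---convergence of all finite-dimensional distributions together with tightness---and then to transfer the conclusion to the genuine corrector $\eps^{-\frac12}(u_\eps-u_0)$ via Proposition~\ref{prop:strongcv}. Since $D=(a,b)$ is bounded, the inclusion ${\cal C}(D)\hookrightarrow L^1(D)$ is continuous, so weak convergence of $u_{1\eps}$ in ${\cal C}(D)$ pushes forward to weak convergence in $L^1(D)$; and because Proposition~\ref{prop:strongcv} gives $\E\{\|\eps^{-\frac d2}(u_\eps-u_0)-u_{1\eps}\|_{L^2(D)}\}\to0$, hence convergence to $0$ in probability in $L^1(D)$, a Slutsky-type argument promotes the limit of $u_{1\eps}$ to the same limit for $\eps^{-\frac12}(u_\eps-u_0)$ in $L^1(D)$. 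This is precisely where the final assertion \eqref{eq:cvueps1dhelm} lives in $L^1(D)$ rather than in ${\cal C}(D)$: the loss of regularity is exactly the strong-$L^2$ remainder tolerated by the proposition.

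For the finite-dimensional distributions, fix $x_1,\dots,x_k\in D$ and $\lambda_1,\dots,\lambda_k\in\Rm$. By the Cram\'er--Wold device it suffices to prove that $\dsum_j\lambda_j u_{1\eps}(x_j)$ converges in distribution to a centered Gaussian. Setting $\Phi(y)=-\dsum_j\lambda_j G(x_j,y)u_0(y)$, which is bounded and continuous on $D$ (here the Lipschitz continuity of $G$ and the continuity of $u_0$ enter), this equals the single oscillatory integral $I_\eps=\eps^{-\frac12}\dint_D\Phi(y)q(\frac y\eps,\omega)dy$. The target is $I_\eps\xrightarrow{\rm dist.}N(0,\sigma^2\dint_D\Phi^2)$; the variance is computed as in \eqref{eq:varu11dhelm}--\eqref{eq:hatR0}, namely $\E\{I_\eps^2\}=\eps^{-1}\dint_{D^2}\Phi(y)\Phi(z)R\big(\frac{y-z}{\eps}\big)dydz\to\sigma^2\dint_D\Phi^2(y)dy$ by integrability of $R$, and for this $\Phi$ it reproduces the covariance $\sigma^2\dint_D G(x_i,y)G(x_j,y)u_0^2(y)dy$ of the Gaussian vector $(u_1(x_j))_j$. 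The \emph{Gaussianity} of the limit is the analytic heart of the argument, and I expect it to be the main obstacle.

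I would obtain it by the classical Bernstein blocking method for $\rho$-mixing processes. After rescaling $t=y/\eps$, $I_\eps=\sqrt\eps\dint_{D/\eps}\Phi(\eps t)q(t)dt$ is an additive functional of the stationary mixing field $q$ over an interval of length $(b-a)/\eps$ against a slowly varying envelope $\Phi(\eps\,\cdot)$. I would cut $(a/\eps,b/\eps)$ into alternating large blocks of length $L_\eps$ and small gaps of length $\ell_\eps$ with $1\ll\ell_\eps\ll L_\eps$, $\eps L_\eps\to0$, and $\ell_\eps/L_\eps\to0$. The gap integrals are negligible in $L^2$ since their total length is an $o(1)$ fraction of $(b-a)/\eps$, while the block integrals are asymptotically independent because \eqref{eq:strongmixing} bounds their cross-covariances through $\varphi(2\ell_\eps)\to0$. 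Factoring the characteristic function of the sum of block integrals into the product of individual characteristic functions, with a telescoping error summed over blocks and controlled by $\varphi$, reduces matters to a Lindeberg--Feller CLT for a triangular array of independent centered variables; the Lyapunov condition follows from $\E\{q^6\}<\infty$ and the boundedness of $\Phi$, giving $\sum_k\E|\xi_k|^3\lesssim(\eps L_\eps)^{\frac12}\to0$. Alternatively one may quote a ready-made central limit theorem for mixing processes from \cite{breiman,EK-86} with the same variance. Either way the limit is $N(0,\sigma^2\dint_D\Phi^2)$, establishing the finite-dimensional convergence.

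For tightness in ${\cal C}(D)$ I would invoke the moment criterion of \cite{billingsley1}. One-point tightness of $\{u_{1\eps}(x_0)\}$ is immediate from the uniform variance bound. For the increments, the Lipschitz estimate $|G(x_1,y)-G(x_2,y)|\le L|x_1-x_2|$ together with the second-moment computation yields, uniformly in $\eps$, $\E\{|u_{1\eps}(x_1)-u_{1\eps}(x_2)|^2\}\le L^2|x_1-x_2|^2\,\eps^{-1}\dint_{D^2}|u_0(y)u_0(z)|\,\big|R\big(\frac{y-z}{\eps}\big)\big|\,dydz\lesssim|x_1-x_2|^2$, the last integral being bounded by $\|u_0\|_{L^\infty}^2|D|\,\|R\|_{L^1}$ after the change of variables $r=(y-z)/\eps$ and using integrability of $R$. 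This is of the form $C|x_1-x_2|^{1+\beta}$ with $\beta=1>0$, so Billingsley's criterion gives tightness. Combining tightness with the finite-dimensional convergence yields $u_{1\eps}\to u_1$ weakly in ${\cal C}(D)$, and the first paragraph then delivers \eqref{eq:cvueps1dhelm} in $L^1(D)$.
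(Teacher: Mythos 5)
Your proposal is correct and follows the paper's overall skeleton --- tightness plus finite-dimensional distributions via Proposition \ref{prop:2}, the identical second-moment/Lipschitz tightness estimate with $\beta=2$, $\delta=1$, and the same transfer to $L^1(D)$ through Proposition \ref{prop:strongcv} --- but you prove the Gaussianity of the oscillatory integral by a genuinely different mechanism. The paper proceeds in two stages at a \emph{fixed} macroscopic scale: it approximates the moment $m$ by a function $m_h$ constant on $M$ intervals of fixed length $h$, shows the $M$ block integrals decouple asymptotically by clipping boundary layers of width $\eta=\sqrt\eps$ (error $M\varphi(2\eta/\eps)+\eta^{1/2}$, with $M$ fixed), and then inside each block discretizes $q$ into the stationary sequence $q_j=\int_{j-1}^j q$ and invokes Bolthausen's CLT \cite{B-AP-82} under summability of the $\rho$-mixing coefficients. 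You instead run a one-scale Bernstein big-block/small-gap scheme with $\eps$-dependent block sizes $L_\eps,\ell_\eps$ and close with Lindeberg--Feller for a triangular array, which buys two things: you never need the piecewise-constant approximation of $\Phi$ (the slowly varying envelope is absorbed into the block variances directly), and you avoid the external stationary-sequence CLT altogether. The price is two verifications you state but do not fully justify: (i) the telescoping factorization error is $M_\eps\,\varphi(2\ell_\eps)$ with $M_\eps\approx(\eps L_\eps)^{-1}\to\infty$, so $\varphi(2\ell_\eps)\to0$ alone is not enough --- you need $M_\eps\varphi(2\ell_\eps)\to0$, which does hold for suitable choices (e.g.\ $L_\eps=\eps^{-1/2}$, $\ell_\eps=\eps^{-1/4}$) because [H2] forces $\varphi(r)=o(r^{-2})$ in $d=1$ by Remark \ref{rem:varphi}; and (ii) the Lyapunov bound $\E|\xi_k|^3\lesssim(\eps L_\eps)^{3/2}$ is not automatic from $\E\{q^6\}<\infty$ but follows by interpolating, $\E|\xi_k|^3\leq(\E\xi_k^2)^{1/2}(\E\xi_k^4)^{1/2}$, where the fourth moment $\E\xi_k^4\lesssim(\eps L_\eps)^2$ is exactly what Lemma \ref{lem:mixingfourpoints} and the integrability of $\varphi^{1/2}$ deliver. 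With those two points filled in, your argument is complete and arguably more self-contained than the paper's, though the paper's fixed-$h$ reduction has the advantage of being reused verbatim in the multidimensional Theorem \ref{thm:oscintmd}.
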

\begin{proof}
  We recall the classical result on the weak convergence of random
  variables with values in the space of continuous paths
  \cite{billingsley1}:
  \begin{proposition}
  \label{prop:2}
  Suppose $(Z_n; 1\leq n\leq \infty)$ are random variables with values
  in the space of continuous functions ${\cal C}(D)$. Then $Z_n$
  converges weakly (in distribution) to $Z_\infty$ provided that:
  \begin{itemize}
  \item[(a)] any finite-dimensional joint distribution
    $(Z_n(x_1),\ldots,Z_n(x_k))$ converges to the joint distribution
    $(Z_\infty(x_1),\ldots,Z_\infty(x_k))$ as $n\to\infty$.
  \item[(b)] $(Z_n)$ is a tight sequence of random variables. A
    sufficient condition for tightness of $(Z_n)$ is the following
    Kolmogorov criterion: there exist positive constants 
   $\nu$, $\beta$, and $\delta$ such that 
  \begin{equation}
    \label{eq:pretight}
    \begin{array}{ll}
       (i) & \sup\limits_{n \geq 1} \E\{|Z_n(t)|^\nu\} <\infty, \quad
       \mbox{ for some } t\in D, \\
       (ii) & \E\{|Z_n(s)-Z_n(t)|^\beta\} \lesssim |t-s|^{1+\delta},
    \end{array}
  \end{equation}
  uniformly in $n\geq1$ and $t,s\in D$.
  \end{itemize}
  \end{proposition}
  
  \paragraph{Tightness.}
  Tightness of $u_{1\eps}(x,\omega)$ is obtained with
  $\nu=\beta=2$ and $\delta=1$. Indeed, we easily obtain that
  \begin{displaymath}
  \E \{ |u_{1\eps}(x,\omega)|^2\} \lesssim 1,
  \end{displaymath}
  in fact uniformly in $x\in D$. Now by assumption on $G(x,y)$ we
  obtain that
  \begin{displaymath}
  \begin{array}{l}
  \E \{|u_{1\eps}(x,\omega)-u_{1\eps}(\xi,\omega)|^2\}
  = \E \Big( \dint_D [G(x,y)-G(\xi,y)]\dfrac1{\sqrt\eps}q(\frac y\eps)
    u_0(y)dy \Big)^2\\
   = \dint_{D^2} [G(x,y)-G(\xi,y)][G(x,\zeta)-G(\xi,\zeta)]
   \dfrac1\eps R(\dfrac{\zeta-y}{\eps}) u_0(y)u_0(\zeta) dy d\zeta\\
   \lesssim |x-\xi|^2 \dint_{D^2}
    \dfrac1\eps |R(\dfrac{\zeta-y}{\eps})| u_0(y)u_0(\zeta)dy d\zeta
   \lesssim |x-\xi|^2,
  \end{array}
  \end{displaymath}
  since the correlation function $R(r)$ is integrable and $u_0$ is
  bounded. This proves tightness of the sequence
  $u_{1\eps}(x,\omega)$, or equivalently weak convergence of the
  measures $\P_\eps$ generated by $u_{1\eps}(x,\omega)$ on $({\cal
    C}(D),{\cal B}({\cal C}(D)))$.
  
  \paragraph{Finite dimensional distributions.}
  Now any finite-dimensional distribution
  $(u_{1\eps}(x_j,\omega))_{1\leq j\leq n}$ has the characteristic
  function
  \begin{displaymath}
    \Phi_\eps(\bk) = \E \{ e^{i k_j u_{1\eps}(x_j,\omega)} \},\qquad
    \bk=(k_1,\ldots, k_n).
  \end{displaymath}
  The above characteristic function can be recast as
  \begin{displaymath}
    \Phi_\eps(\bk) = \E \{ e^{i \int_D m(y)\frac{1}{\sqrt\eps}
   q_\eps(y)dy }\},\qquad
    m(y) = \dsum_{j=1}^n k_j G(x_j,y) u_0(y).
  \end{displaymath}
  As a consequence, convergence of the finite dimensional
  distributions will be proved if we can show convergence of:
  \begin{equation}
    \label{eq:cvonemoment}
    I_{m\eps}:= \dint_D m(y) \dfrac{1}{\sqrt\eps} q(\dfrac y\eps)dy 
    \xrightarrow{\,\rm dist. \,}
   I_m :=\dint_D m(y) \sigma dW_y, \qquad \eps\to0,
  \end{equation}
  for arbitrary continuous moments $m(y)$. Such integrals have been
  extensively analyzed in the literature, see e.g.
  \cite{BP-SIAP-78,khasminskii}, where the above integral, for
  $D=(a,b)$ may be seen as the solution $x_\eps(b)$ of the following
  ordinary differential equation with random coefficients:
  \begin{displaymath}
    \dot x_\eps = \dfrac{1}{\sqrt\eps} q(\dfrac t\eps) m(t),\qquad
    x_\eps(a)=0.
  \end{displaymath}
  Since we will use the same methodology in higher space dimensions,
  we give a short proof of \eqref{eq:cvonemoment} using the central
  limit theorem for correlated discrete random variables as stated
  e.g. in \cite{B-AP-82}.

  \paragraph{Approximation by piecewise constant integrand.}
  Note that if we replace $m(y)$ by $m_h(y)$, then 
  \begin{equation}
    \label{eq:appmh}
    \E \{ (I_{m\eps}-I_{m_h\eps})^2\} \lesssim \|m-m_h\|_\infty^2,
  \end{equation}
  where $\|\cdot\|_\infty$ is the uniform norm on $D$. It is therefore
  sufficient to consider \eqref{eq:cvonemoment} for a sequence of
  functions $m_h$ converging to $m$ in the uniform sense. Since $m$ is
  (uniformly) continuous, we can approximate it by piecewise constant
  functions $m_h$ that are constant on $M$ intervals of size
  $h=\frac{b-a}M$. Let $m_{hj}$ be the value of $m_h$ on the $j^{\rm
    th}$ interval and define the random variables
  \begin{displaymath}
    M_{\eps j} = m_{hj} \dint_{(j-1)h}^{jh}
     \dfrac{1}{\sqrt\eps} q(\dfrac y\eps)dy .
  \end{displaymath}
  \paragraph{Independence of random variables.}
  We want to show that the variables $M_{\eps j}$ become independent
  in the limit $\eps\to0$. This is done by showing that
  \begin{displaymath}
    {\cal E}(\bk)=\Big|\E\{e^{i\sum_{j=1}^M k_jM_{\eps j}}\} - \prod_{j=1}^M 
        \E\{e^{ik_jM_{\eps j}}\}\Big| \to0 \,\, \mbox{ as } \eps\to0, \quad
  \end{displaymath}
  for all $\bk=\{k_j\}_{1\leq j\leq M} \in \Rm^{M}$.  Let
  $\bk\in\Rm^M$ fixed, $0<\eta<\frac h2$ and define
  \begin{displaymath}
    P^\eta_{\eps j} =  m_{hj} \dint_{(j-1)h+\eta}^{jh-\eta}
     \dfrac{1}{\sqrt\eps} q(\dfrac y\eps)dy ,\qquad 
    Q^\eta_{\eps j} = M_{\eps j} - P^\eta_{\eps j}.
  \end{displaymath}
  Now we write 
  \begin{displaymath}
    \E\{e^{i\sum_{j=1}^M k_jM_{\eps j}}\} =
    \E\{[e^{i k_1 Q^\eta_{\eps 1}}-1]e^{ik_1 P^\eta_{\eps 1}
         +i\sum_{j=2}^M k_jM_{\eps j}}\}  +
     \E\{e^{ik_1 P^\eta_{\eps 1}+i\sum_{j=2}^M k_jM_{\eps j}}\}.
  \end{displaymath}
  Using the strong mixing condition \eqref{eq:strongmixing}, we find
  that
  \begin{displaymath}
    \Big|\E\{e^{ik_1 P^\eta_{\eps 1}+i\sum_{j=2}^M k_jM_{\eps j}}\}
    - \E\{e^{ik_1 P^\eta_{\eps 1}}\}\E\{e^{i\sum_{j=2}^M k_jM_{\eps j}}\}\Big|
   \lesssim \varphi(\frac{2\eta}{\eps}).
  \end{displaymath}
  Now we find that $\E\{Q^\eta_{\eps j}\}=0$ and $\E\{[Q^\eta_{\eps
    j}]^2\}\lesssim\eta$. The latter result comes from integrating
  $\eps^{-1}R(\frac{t-s}\eps)ds dt$ over a cube of size $O(\eta^2)$.
  Since $|e^{ix}-1|\lesssim |x|$, we deduce that
  \begin{displaymath}
    |\E\{[e^{i k_1 Q^\eta_{\eps 1}}-1]e^{ik_1 P^\eta_{\eps 1}
         +iZ}\}|
    \leq \E\{[e^{i k_1 Q^\eta_{\eps 1}}-1]^2\}^{\frac12}
   \lesssim \eta^{\frac12},
  \end{displaymath}
  for an arbitrary random variable $Z$ (equal to $0$ or to
  $\sum_{j=2}^M k_jM_{\eps j}$ here). Thus,
  \begin{displaymath}
     \Big|\E\{e^{ik_1 M_{\eps 1}+i\sum_{j=2}^M k_jM_{\eps j}}\}
    - \E\{e^{ik_1 M_{\eps 1}}\}\E\{e^{i\sum_{j=2}^M k_jM_{\eps j}}\}\Big|
   \lesssim \varphi(\frac{2\eta}{\eps}) + \eta^{\frac12}.
  \end{displaymath}
  By induction, we thus find that for all $0<\eta<\frac h2$, 
  \begin{displaymath}
    {\cal E} \lesssim M\varphi(\frac{2\eta}{\eps}) + \eta^{\frac12}.
  \end{displaymath}
  This expression tends to $0$ say for $\eta=\eps^{\frac12}$. This
  shows that the random variables $M_{\eps j}$ become independent as
  $\eps\to0$. We show below that each $M_{\eps j}$ converges to a
  centered Gaussian variable as $\eps\to0$. The sum over $j$ thus
  yields in the limit a centered Gaussian variable with variance the
  sum of the $M$ individual variances.

  \paragraph{Central Limit Theorem for discrete random variables.}
  By stationarity of the process $q(x,\omega)$, we are thus led to
  showing that
  \begin{displaymath}
    \dint_0^h  \dfrac{1}{\sqrt\eps} q(\dfrac y\eps)dy  
     \xrightarrow{\,\rm dist. \,}\dint_0^h \sigma dW_y
      = \sigma W_h = \sigma {\cal N}(0,h), \qquad \eps\to0,
  \end{displaymath}
  where ${\cal N}(0,h)$ is the centered Gaussian variable with
  variance $h$.  We break up $h$ into $N=h/\eps$ (which we assume is
  an integer) intervals and call
  \begin{displaymath}
    q_j = \dint_{(j-1)\eps}^{j\eps} \dfrac{1}{\eps} q(\dfrac y\eps)dy
    = \dint_{j-1}^j q(y)dy,
   \qquad j\in\Zm.
  \end{displaymath}
  The $q_j$ are stationary mixing random variables and we are interested
  in the limit 
  \begin{equation}
    \label{eq:discsum1d}
    \sqrt\eps\sum_{j=1}^N q_j= \dfrac{\sqrt h}{\sqrt N}
    \sum_{j=1}^N q_j.
  \end{equation}
  Following remark 3 in \cite{B-AP-82}, we introduce ${\cal A}_m$ and
  ${\cal A}^m$ as the $\sigma-$algebras generated by $(q_j)_{j\leq m}$
  and $(q_j)_{j\geq m}$, respectively. Let then 
  \begin{equation}
    \label{eq:discreterho1d}
    \rho(n) = \sup\Big\{ \dfrac{\E\big\{(\eta-\E\{\eta\})(\xi-\E\{\xi\})\big\}}
     {\big(\E\{\eta^2\}\E\{\xi^2\}\big)^{\frac12}}; \,\eta\in 
     L^2({\cal A}_0),\quad \xi\in L^2({\cal A}^n\} \Big\}.
  \end{equation}
  Then provided that $\sum_{n\geq1} \rho(n) <\infty$, we obtain the
  following central limit theorem
  \begin{equation}
    \label{eq:clt}
    \dfrac{\sqrt h}{\sqrt N} \sum_{j=1}^N q_j \xrightarrow{\,\rm dist. \,}
    \sqrt h \sigma {\cal N}(0,1) \equiv \sigma {\cal N}(0,h) ,
  \end{equation}
  where ${\cal N}(0,1)$ is the standard normal variable, where
  $\equiv$ is used to mean equality in distribution, and where
  $\sigma^2=\sum_{n\in\Zm} \E\{q_0 q_n\}$. It remains to verify that the two
  definitions of $\sigma$ above and in \eqref{eq:hatR0} agree and that
  $\sum_{n\geq1} \rho(n) <\infty$. Note that 
  \begin{displaymath}
    \sum_{n\in\Zm} \E\{q_0 q_n\} = \!
    \dint_0^1\!\! \dint_{-\infty}^\infty\!\!\!\!
      \E\{q(y)q(z)\} dydz = \!\dint_0^1\!\! \dint_{-\infty}^\infty\!\!\!\!
     \E \{ q(y)q(y+z)\} dydz = \!\dint_0^1\!\!\hat R(0)dy = \hat R(0),
  \end{displaymath}
  thanks to \eqref{eq:hatR0}. Now we observe that
  $\rho(n)\leq\varphi(n-1)$ so that summability of $\rho(n)$ is
  implied by the integrability of $\varphi(r)$ on $\Rm^+$. This
  concludes the proof of the convergence in distribution of
  $u_{1\eps}$ in the space of continuous paths ${\cal C}(D)$.
 
  It now remains to recall the convergence result
  \eqref{eq:limuuepscorr} to obtain \eqref{eq:cvueps1dhelm} in the
  space of integrable paths.
\end{proof}

\subsection{Oscillatory integral in arbitrary space dimensions}
\label{sec:helmmd}

In dimension $1\leq d\leq 3$, the leading term in the corrector
$\eps^{-\frac d2}(u_\eps-u_0)$ is given by:
\begin{equation}
  \label{eq:u1epsmdhelm}
  u_{1\eps}(\bx,\omega)=
  \dint_D -G(\bx,\by)  \dfrac{1}{\eps^{\frac d2}}
    q_\eps(\by,\omega) u_0(\by) d\by.
\end{equation}
The variance of $u_{1\eps}(\bx,\omega)$ is given by
\begin{displaymath}
  \E\{u_{1\eps}^2(\bx,\omega)\} =
  \dint_{D^2} G(\bx,\by)G(\bx,\bz) \dfrac{1}{\eps^d} 
  R\Big(\dfrac{\by-\bz}\eps\Big) u_0(\by)u_0(\bz) d\by d\bz.
\end{displaymath}
As in the one-dimensional case, it converges as $\eps\to0$ to the limit
\begin{equation}
  \label{eq:limitu1md}
  \E\{u_{1}^2(\bx,\omega)\} 
   =\sigma^2 \dint_D G^2(\bx,\by) u_0^2(\by)d\by,\qquad
   \sigma^2 = \dint_{\Rm^d} \E \{ q(\bzero)q(\by)\} d\by. 
\end{equation}
Because of the singularities of the Green's function $G(\bx,\by)$ in
dimension $d\geq2$, we prove here less accurate results than those
obtained in dimension $d=1$ in the preceding section.

We want to obtain convergence of the above corrector in distribution
on $(\Omega,{\cal F},\P)$ and weakly in $D$. More precisely, let
$M_k(\bx)$ for $1\leq k\leq K$ be sufficiently smooth functions such
that
\begin{equation}
  \label{eq:mk}
  m_k(\by) = -\dint_D M_k(\bx)G(\bx,\by) u_0(\by)  d\bx
    = -{\cal G}M_k(\by) u_0(\by) ,\quad 1\leq k\leq K,
\end{equation}
are continuous functions (we thus assume that $u_0(\bx)$ is continuous
as well). Let us introduce the random variables
\begin{equation}
  \label{eq:Ikeps}
  I_{k\eps}(\omega) = \dint_D m_k(\by) \dfrac{1}{\eps^{\frac d2}}
    q\Big(\dfrac{\by}{\eps},\omega\Big) d\by.
\end{equation}
Because of hypothesis [H3], the accumulation points of the integrals
$I_{k\eps}(\omega)$ are not modified if $q(\frac{\by}{\eps},\omega)$
is replaced by $q_\eps(\by,\omega)$. The main result of this section
is the following:
\begin{theorem}
  \label{thm:oscintmd}
  Under the above conditions and the hypotheses of Proposition
  \ref{prop:strongcv}, the random variables $I_{k\eps}(\omega)$
  converge in distribution to the mean zero Gaussian random variables
  $I_k(\omega)$ as $\eps\to0$, where the correlation matrix is given
  by
  \begin{equation}
    \label{eq:Sigma}
    \Sigma_{jk}=\E\{I_jI_k\} = \sigma^2\dint_D m_j(\by) m_k(\by)
       d\by,
  \end{equation}
  where $\sigma$ is given by
  \begin{equation}
    \label{eq:sigmamd}
    \sigma^2 = \dint_{\Rm^d} \E \{ q(\bzero)q(\by)\} d\by.
  \end{equation}
  Moreover, we have the stochastic representation
  \begin{equation}
    \label{eq:Ikrand}
    I_k(\omega) = \dint_D m_k(\by) \sigma dW_{\by},
  \end{equation}
  where $dW_{\by}$ is standard multi-parameter Wiener process
  \cite{Khos-SV-02}.

  As a result, for $M(\bx)$ sufficiently smooth, we obtain that
  \begin{equation}
    \label{eq:convmtsmd}
    \Big(\dfrac{u_\eps - u_0}{\eps^{\frac d2}}, M \Big)
    \xrightarrow{\,\rm dist. \,}
    -\sigma  \dint_D {\cal G}M(\by){\cal G}f(\by) dW_{\by}.
  \end{equation}
\end{theorem}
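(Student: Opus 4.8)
The plan is to follow the four-step template of the one-dimensional argument in Theorem \ref{thm:cv1dhelm}, with two simplifications and one genuinely new ingredient. Since the theorem asks only for convergence in distribution of the \emph{finite} family $(I_{1\eps},\dots,I_{K\eps})$ of real random variables, rather than functional convergence in ${\cal C}(D)$, no tightness step is needed. First I would invoke the Cram\'er--Wold device: joint convergence of $(I_{1\eps},\dots,I_{K\eps})$ to the centered Gaussian vector with covariance \eqref{eq:Sigma} is equivalent to showing, for each fixed $\bt=(t_1,\dots,t_K)\in\Rm^K$, that $\dsum_k t_k I_{k\eps}=I_{m\eps}$ with $m=\dsum_k t_k m_k$ converges in distribution to a centered Gaussian of variance $\sigma^2\dint_D m^2(\by)\,d\by$. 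As each $m_k$ is continuous, so is $m$, and the theorem reduces to the scalar $d$-dimensional analogue of \eqref{eq:cvonemoment}, namely $I_{m\eps}\xrightarrow{\,\rm dist.\,}\dint_D m(\by)\,\sigma\,dW_\by$ for an arbitrary continuous $m$.

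For this scalar limit I would mimic the blocking argument of the $d=1$ proof. Approximate $m$ uniformly by a function $m_h$ that is constant, with value $m_{h\alpha}$, on each of the $M=O(h^{-d})$ cubes $Q_\alpha$ of side $h$ tiling $D$; by \eqref{eq:appmh} (whose derivation uses only integrability of $R$ and boundedness of the integrand, hence is dimension-free) the error $\E\{(I_{m\eps}-I_{m_h\eps})^2\}\lesssim\|m-m_h\|_\infty^2$ is uniform in $\eps$, so it suffices to treat piecewise constant $m_h$ and let $h\to0$ afterwards. Writing $M_{\eps\alpha}=m_{h\alpha}\dint_{Q_\alpha}\eps^{-d/2}q(\by/\eps)\,d\by$, I claim the cube contributions become asymptotically independent. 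Inserting a buffer shell of width $\eta$ inside each cube and peeling the cubes off one at a time as in the ``Independence of random variables'' step, the strong-mixing bound \eqref{eq:strongmixing} together with the estimate $\lesssim\eta$ for the variance of each boundary-shell remainder (its support has volume $\lesssim h^{d-1}\eta$, and integrating $\eps^{-d}R((\by-\bz)/\eps)$ over it costs $\lesssim\eta$) yields a total error $\lesssim M\varphi(2\eta/\eps)+M\eta^{1/2}$; with $M$ fixed (once $h$ is fixed) and $\eta=\sqrt\eps$ this vanishes as $\eps\to0$ since $\varphi$ is decreasing with $\varphi(r)\to0$.

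It then remains to identify the limit of a single $M_{\eps\alpha}$. Rescaling $\by=\eps\bz$ gives $M_{\eps\alpha}=m_{h\alpha}\,\eps^{d/2}\dint_{Q_\alpha/\eps}q(\bz)\,d\bz$; tiling the blown-up cube $Q_\alpha/\eps$ (of side $h/\eps$) by $N=(h/\eps)^d$ unit cubes and setting $q_\beta=\dint_{\text{unit cube }\beta}q(\bz)\,d\bz$, this becomes $M_{\eps\alpha}=m_{h\alpha}\,h^{d/2}\,N^{-1/2}\dsum_\beta q_\beta$. The $q_\beta$ form a stationary, mean-zero, $\rho$-mixing field indexed by $\Zm^d$, and a multi-parameter central limit theorem for such fields (the $d$-dimensional counterpart of the result of \cite{B-AP-82} used in the scalar case) gives $N^{-1/2}\dsum_\beta q_\beta\to{\cal N}(0,\sigma^2)$ with $\sigma^2=\dsum_{\beta\in\Zm^d}\E\{q_0q_\beta\}=\dint_{[0,1]^d}\dint_{\Rm^d}\E\{q(\by)q(\bz)\}\,d\bz\,d\by=\hat R(0)$, in agreement with \eqref{eq:sigmamd}. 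Thus $M_{\eps\alpha}\to{\cal N}(0,\sigma^2 m_{h\alpha}^2|Q_\alpha|)$, and summing the asymptotically independent contributions produces a centered Gaussian of variance $\sigma^2\dsum_\alpha m_{h\alpha}^2|Q_\alpha|=\sigma^2\|m_h\|_{L^2(D)}^2\to\sigma^2\dint_D m^2$ as $h\to0$. A diagonal $\eps\to0$, $h\to0$ passage controlled by \eqref{eq:appmh} then establishes the scalar limit, and the stochastic representation \eqref{eq:Ikrand} is simply the rewriting of the limiting centered Gaussian family, whose covariance is \eqref{eq:Sigma}, as a Wiener integral against the multi-parameter white noise $dW_\by$ \cite{Khos-SV-02}.

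Finally I would deduce \eqref{eq:convmtsmd}. By symmetry of $G$, $\big(\frac{u_\eps-u_0}{\eps^{d/2}},M\big)=-\eps^{-d/2}({\cal G}q(\cdot/\eps)u_0,M)+R_\eps$, where Proposition \ref{prop:strongcv} gives $\E\{|R_\eps|\}\to0$, and $-\eps^{-d/2}({\cal G}q(\cdot/\eps)u_0,M)=\dint_D\big(-{\cal G}M(\by)u_0(\by)\big)\eps^{-d/2}q(\by/\eps)\,d\by=I_{m\eps}$ with $m=-{\cal G}M\,u_0$ as in \eqref{eq:mk}. Since $R_\eps\to0$ in $L^1(\Omega)$, hence in probability, Slutsky's theorem and the scalar limit above give $\big(\frac{u_\eps-u_0}{\eps^{d/2}},M\big)\xrightarrow{\,\rm dist.\,}I_m=\dint_D m(\by)\sigma\,dW_\by=-\sigma\dint_D{\cal G}M(\by){\cal G}f(\by)\,dW_\by$, using $u_0={\cal G}f$. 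I expect the main obstacle to be the single-block limit: unlike $d=1$, where \cite{B-AP-82} applies verbatim to a mixing sequence, here one must invoke a genuinely multi-parameter central limit theorem for $\rho$-mixing random fields and check that the required summability of the mixing coefficients is guaranteed by hypothesis [H2] (indeed Remark \ref{rem:varphi} shows $\varphi(r)=o(r^{-2d})$, which is ample); the decorrelation bookkeeping is also more delicate, since each block now has neighbors in all $2d$ directions and the buffers are shells rather than intervals, although keeping the number $M$ of blocks fixed as $\eps\to0$ renders it manageable.
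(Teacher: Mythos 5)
Your proposal is correct and follows essentially the same route as the paper's own proof: reduction via Cram\'er--Wold (the paper phrases it through joint characteristic functions), approximation by piecewise constant $m_h$ on cubes controlled by \eqref{eq:appmh}, asymptotic independence of the blocks via buffer shells and the mixing bound, the multi-parameter CLT of \cite{B-AP-82} for the rescaled unit-cube averages, identification of $\sigma^2$ with $\hat R(\bzero)$, and finally Proposition \ref{prop:strongcv} plus Slutsky to get \eqref{eq:convmtsmd}. The only detail the paper makes explicit that you gloss over is the treatment of the $O(h^{-d+1})$ cubes meeting $\partial D$, where $m_h$ is set to zero and whose contribution is shown to vanish in mean square as $h\to0$.
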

\begin{proof}
  The convergence in \eqref{eq:convmtsmd} is a direct consequence of
  \eqref{eq:Ikrand} since 
  \begin{displaymath}
    \dint_{D^2} M(\bx) G(\bx,\by) u_0(\by) dW_{\by} d\bx=
    \dint_D {\cal G}M(\by){\cal G}f(\by) dW_{\by},
  \end{displaymath}
  and of the strong convergence \eqref{eq:limuuepscorr} in Proposition
  \ref{prop:strongcv}.  The equality \eqref{eq:Ikrand} is directly
  deduced from \eqref{eq:Sigma} since $I_k(\omega)$ is a
  (multivariate) Gaussian variable.  In order to prove
  \eqref{eq:Sigma}, we use a methodology similar to that in the proof
  of Theorem \ref{thm:cv1dhelm}.
  
  The characteristic function of the random variables
  $I_{k\eps}(\omega)$ is given by
  \begin{displaymath}
    \Phi_\eps(\bk) = \E \{ e^{i \sum_{k=1}^Kk_j I_{j\eps}(\omega)} \},\qquad
    \bk=(k_1,\ldots, k_K),
  \end{displaymath}
  and may be recast as
  \begin{displaymath}
    \Phi_\eps(\bk) = \E \{ e^{i \int_D m(y) \eps^{\frac {-d}2}
    q(\frac{\by}{\eps},\omega)d\by}\}, \quad
    m(\by) = \dsum_{j=1}^K k_j m_j(\by).
  \end{displaymath}
  So \eqref{eq:Sigma} follows from showing that 
  \begin{equation}
    \label{eq:cvIepsmd}
    I_{\eps}(\omega) = \dint_D m(\by) \dfrac{1}{\eps^{\frac d2}}
    q\Big(\dfrac{\by}{\eps},\omega\Big) d\by
   \xrightarrow{\,\rm dist. \,} \dint_D m(\by) \sigma dW_{\by},
  \end{equation}
  for an arbitrary continuous function $m(\by)$. As in the
  one-dimensional case and for the same reasons, we replace $m(\by)$
  by $m_h(\by)$, which is constant on small hyper-cubes ${\cal C}_j$
  of size $h$ (and volume $h^d$) and that there are $M\approx h^{-d}$
  of them. Because $\partial D$ is assumed to be sufficiently smooth,
  it can be covered by $M_S\approx h^{-d+1}$ cubes and we set
  $m_h(\bx)=0$ on those cubes. The contribution to $I_\eps(\omega)$ is
  seen to converge to $0$ as $h\to0$ in the mean-square sense as in
  \eqref{eq:appmh}.
  
  We define the random variables
  \begin{displaymath}
    M_{\eps j}(\omega) = m_{hj} \dint_{{\cal C}_j}
     \dfrac{1}{\eps^{\frac d2}} q(\dfrac \by\eps,\omega)d\by,\qquad
    1\leq j\leq M,
  \end{displaymath}
  where $m_{hj}$ is the value of $m_h$ on ${\cal C}_j$ and are
  interested in the limiting distribution as $\eps\to0$ of the random
  variable
  \begin{equation}
    \label{eq:Iepsh}
    I_\eps^h (\omega) = \dsum_{j=1}^M M_{\eps j}(\omega).
  \end{equation}
  We show below that these random variables are again independent in
  the limit $\eps\to0$ and each variable converges to a centered
  Gaussian variable. As a consequence, $I_\eps^h(\omega)$ converges
  in distribution to a centered Gaussian variable whose variance is
  the sum of the variances of the variables $M_{\eps j}(\omega)$ in
  the limit $\eps\to0$.
  
  That the random variables $M_{\eps j}$ are independent in the limit
  $\eps\to0$ is shown using a similar method to that of the
  one-dimensional case. We want to obtain that
  \begin{displaymath}
    {\cal E}(\bk)=\Big|\E\{e^{i\sum_{j=1}^M k_jM_{\eps j}}\} - \prod_{j=1}^M 
        \E\{e^{ik_jM_{\eps j}}\}\Big| \to0 \quad \mbox{ as } \eps\to0, \quad
    \mbox{ for all } \bk=\{k_j\}_j \in \Rm^{M}.
  \end{displaymath}
  Let $0<\eta<\frac h2$ and ${\cal D}_j^\eta=\{\bx\in{\cal C}_j;
  d(\bx,\partial{\cal C}_j)>\eta\}$. We define
  \begin{displaymath}
    P^\eta_{\eps j} =  m_{hj} \dint_{{\cal D}_j^\eta} 
    \dfrac{1}{\eps^{\frac d2}} q(\dfrac \by\eps,\omega)d\by,\qquad
    Q^\eta_{\eps j} = M_{\eps j} - P^\eta_{\eps j}.
  \end{displaymath}
  We write again:
  \begin{displaymath}
    \E\{e^{i\sum_{j=1}^M k_jM_{\eps j}}\} =
    \E\{[e^{i k_1 Q^\eta_{\eps 1}}-1]e^{ik_1 P^\eta_{\eps 1}
         +i\sum_{j=2}^M k_jM_{\eps j}}\}  +
     \E\{e^{ik_1 P^\eta_{\eps 1}+i\sum_{j=2}^M k_jM_{\eps j}}\}.
  \end{displaymath}
  Using the strong mixing condition \eqref{eq:strongmixing}, we find
  that
  \begin{displaymath}
    \Big|\E\{e^{ik_1 P^\eta_{\eps 1}+i\sum_{j=2}^M k_jM_{\eps j}}\}
    - \E\{e^{ik_1 P^\eta_{\eps 1}}\}\E\{e^{i\sum_{j=2}^M k_jM_{\eps j}}\}\Big|
   \lesssim \varphi(\frac{2\eta}{\eps}).
  \end{displaymath}
  We find as in the one-dimensional case that $\E\{Q^\eta_{\eps
    j}\}=0$ and $\E\{[Q^\eta_{\eps j}]^2\}\lesssim\eta
  h^{(d-1)}\lesssim \eta$ with a bound independent of $\eps$. This
  comes from integrating $\eps^{-d}R(\frac{\bx-\by}\eps)d\bx d\by$ on
  a domain of size $O([\eta h^{d-1}]^2)$.  The rest of the proof
  follows as in the one-dimensional case.
  
  It remains to address the convergence of $M_{\eps j}$ as $\eps\to0$.
  By invariance of $q(\bx)$, it is sufficient to consider integrals on
  the cube $[\bzero,\bh]$, with $\bh=(h,\ldots,h)$. It now remains to
  show that
  \begin{equation}
    \label{eq:convcube}
    \dint_{[\bzero,\bh]} \dfrac{1}{\eps^{\frac d2}}
    q\Big(\dfrac{\by}{\eps},\omega\Big)d\by  \xrightarrow{\,\rm dist. \,}
    \sigma \dint_{[\bzero,\bh]} dW_{\by} = \sigma {\cal N}(0,h^d).
  \end{equation}
  For a multi-index $\bj\in\Zm^d$, we define 
  \begin{displaymath}
    q_{\bj}(\omega) = \dint_{\bj+[\bzero,\bone]} q(\by,\omega) d\by.
  \end{displaymath}
  Then \eqref{eq:convcube} will follow by homogeneity if we can show
  that 
  \begin{equation}
    \label{eq:sumqsmd}
    \dfrac{1}{\sigma n^{\frac d2}} \dsum_{\bj\in[\bzero,\bn]}
    q_\bj \xrightarrow{\,\rm dist. \,} {\cal N}(0,1).
  \end{equation}
  The latter result is proved in e.g. \cite{B-AP-82,Doukhan95}.  The
  results in these references are stated in terms of $\alpha$-mixing
  coefficients. Since $\alpha$ coefficients are bounded by $\rho$
  coefficients \cite[p.4]{Doukhan95}, we state the results in terms of
  less optimal $\rho$-mixing coefficients.
  
  Let $A$ and $B$ be subsets of $\Zm^d$ and let ${\cal A}$ and ${\cal
    B}$ be the $\sigma$ algebras generated by $q_{\bj}$ on $A$ and
  $B$, respectively. Then we define 
  \begin{displaymath}
    \rho(n)= \sup\Big\{ \dfrac{\E\big\{(\eta-\E\{\eta\})(\xi-\E\{\xi\})\big\}}
     {\big(\E\{\eta^2\}\E\{\xi^2\}\big)^{\frac12}}; \,\eta\in 
     L^2({\cal A}),\quad \xi\in L^2({\cal B}\},\quad
      d(A,B)\geq n \Big\}.
  \end{displaymath}
  We then assume that $\E\{q_{\bj}^6\}<\infty$ as in hypothesis [H2] and 
  that $\rho(n)=o(n^{-d})$ and that 
  \begin{equation}
    \label{eq:decayrhond}
    \dsum_{n=0}^\infty n^{d-1} \rho^{\frac12}(n) <\infty.
  \end{equation}
  Then we verify that the hypotheses in \cite{B-AP-82} (see also
  \cite[p.48]{Doukhan95}) are satisfied so that \eqref{eq:sumqsmd}
  holds with
  \begin{displaymath}
    \sigma^2 = \dsum_{\bj\in\Zm^d}\E\{q_{\bzero}q_{\bj}\}.
  \end{displaymath}
  We verify as in the one-dimensional case that the above $\sigma$
  agrees with that in definition \eqref{eq:sigmamd}. Now we verify
  that \eqref{eq:decayrhond} is a consequence of the integrability of
  $r^{d-1}\varphi^{\frac12}(r)$. The decay $\rho(n)=o(n^{-d})$ is
  obtained when $\varphi(r)$ decays faster than $r^{-d-\eta}$ for some
  $\eta>0$; see Remark \ref{rem:varphi}.
\end{proof}

\subsection{Larger fluctuations, random and periodic homogenization}
\label{sec:larger}

We now consider several generalizations of the results presented in
earlier sections and compare homogenization in periodic and random
media. The results stated in the preceding sections, corresponding to
the case $\alpha=0$ below, generalize to larger fluctuations of the
form:
\begin{equation}
  \label{eq:tildeqepslarge}
  \tilde q_\eps(\bx,\omega) = \dfrac{1}{\eps^{\alpha d}} 
    q\Big(\dfrac{\bx}\eps,\omega\Big).
\end{equation}
The corrector $-{\cal G} q_\eps {\cal G} f$ is now of order
$\eps^{d(\frac12-\alpha)}$ for $0\leq\alpha<\frac12$. The next-order
corrector, given by ${\cal G}q_\eps {\cal G}q_\eps {\cal G} f$ in
\eqref{eq:expansioncorr}, is bounded in $L^1(\Omega\times D)$ by
$\eps^{d(\frac{1+\eta}{2+\eta}-2\alpha)}$ according to Lemma
\ref{lem:gqgqgf}. The order of this term is smaller than the order of
the leading corrector $\eps^{d(\frac12-\alpha)}$ again provided that
$0\leq\alpha<\frac{\eta}{2(2+\eta)}$, which converges to $\frac12$ for 
$d=1,2$ as $\eta\to\infty$ and converges to $\frac16$ for $d=3$ as
$\eta\to1$.

In dimensions $d=1,2$, we can infer from these results that
$\eps^{-d(\frac12-\alpha)}(u_\eps-u_0)$ converges in distribution to
the limits obtained in the preceding sections as $\eps\to0$ provided
that $0\leq\alpha<\frac12$. The proof presented in this paper extends
to the values $0\leq\alpha<\frac14$. Indeed, the proof is based on
imposing that the spectral radius of ${\cal G}q_\eps{\cal G}q_\eps$ is
sufficiently small using \eqref{eq:bdgqgq} in Lemma
\ref{lem:boundgqgq}, which for \eqref{eq:tildeqepslarge}, translates
into $\E\{ \|{\cal G}q_\eps{\cal G}q_\eps \|^2_{{\cal L}(L^2(D))}\}
\lesssim \eps^{d(1-4\alpha)}$. We then verify that all results leading
to Proposition \ref{prop:strongcv} generalize when $0<\alpha<\frac14$
to yield \eqref{eq:limuuepscorr} with $\eps^{\frac d2}$ replaced by
$\eps^{d(\frac12-\alpha)}$. A proof of convergence for
$0\leq\alpha<\frac12$ would presumably require us to analyze all the
terms in the formal expansion
\begin{equation}
  \label{eq:formalexp}
  u_\eps=\dsum_{k=0}^\infty(-{\cal
  G}q_\eps)^k{\cal G}f,
\end{equation}
something we do not address here.  In the limiting case
$\alpha=\frac12$, the above theory breaks down and $u_\eps$ no longer
converges to the deterministic solution $u_0$ as is shown in the
temporal one-dimensional case in \cite{PP-GAK-06}.

\medskip

The results on the corrector $u_\eps-u_0$ obtained in the preceding
sections, namely Theorems \ref{thm:cv1dhelm} and \ref{thm:oscintmd}
are valid for $1\leq d\leq 3$. If we admit that the expansion
\eqref{eq:formalexp} involves a first term $u_0$, a second term
$-{\cal G}q_\eps u_0$, and smaller order terms, then the results
obtained in Theorem \ref{thm:oscintmd} show that $u_\eps-u_0$
converges weakly in space and in distribution to a process of order
$O(\eps^{\frac d2})$.  The critical case $d=4$ yields a correction of
order $\eps^2$, whereas $\eps^{\frac d2}$ would be even smaller for
$d\geq5$. 

The theory presented in this paper does not allow us to justify
\eqref{eq:formalexp} when $d\geq4$ because the corresponding Green's
function are no longer square integrable.  Another argument shows that
corrections of order $\eps^2$ correspond to a transition and that we
should not expect quite the same results for $d\leq3$ and $d\geq4$.
Indeed, let us consider the problem in the periodic case:
\begin{equation}
  \label{eq:per}
  \begin{array}{ll}
     -\Delta u_\eps + q\Big(\dfrac{\bx}\eps\Big) u_\eps =f & D\\
     u_\eps=0 & \partial D,
  \end{array}
\end{equation}
on a smooth open, bounded, domain $D\subset\Rm^d$, where $q(\by)$ is
$[0,1]^d$-periodic. Then following \cite{blp1}, we introduce the fast
scale $\by=\frac{\bx}\eps$ and introduce a function
$u_\eps=u_\eps(\bx,\frac{\bx}\eps)$. Gradients $\nabla_\bx$ become
$\frac1\eps\nabla_\by+\nabla_\bx$ and \eqref{eq:per} becomes formally
\begin{displaymath}
  \Big(-\dfrac{1}{\eps^2}\Delta_\by - \dfrac2\eps\nabla_\bx\cdot\nabla_\by
   -\Delta_\bx + q(\by)\Big) u_\eps(\bx,\by) = f(\bx).
\end{displaymath}
Plugging the expansion $u_\eps=u_0+\eps u_1+\eps^2 u_2$ into the above
equality and equating like powers of $\eps$ yields three equations.
The first equation shows that $u_0=u_0(\bx)$. The second equation
shows that $u_1=u_1(\bx)$, which we can choose as $u_1\equiv0$. The
third equation
\begin{math}
  -\Delta_\by u_2 - \Delta_\bx u_0 +q(\by) u_0 = f(\bx),
\end{math}
admits a solution provided that 
\begin{displaymath}
  - \Delta_\bx u_0 + \langle q \rangle u_0 = f(\bx),\quad D
\end{displaymath}
with $u_0=0$ on $\partial D$. Here, $\langle q \rangle$ is the average
of $q$ on $[0,1]^d$, which we assume is sufficiently large that the
above equation admits a unique solution.  We recast the above equation
as $u_0={\cal G}_D f$.  The corrector $u_2$ thus solves
\begin{displaymath}
  -\Delta_\by u_2  = \Big( \langle q \rangle - q(\by)\Big) u_0(\bx),
\end{displaymath}
and is uniquely defined along with the constraint $\langle u_2
\rangle=0$.  We denote the solution operator of the above cell problem
as ${\cal G}_{\#}$ so that $u_2=-{\cal G}_{\#}(q-\langle q \rangle)
{\cal G}f$. Thus formally, we have obtained that
\begin{equation}
  \label{eq:corrper}
  u_\eps(\bx) = {\cal G}f(\bx) - \eps^2 {\cal G}_{\#}(q-\langle q \rangle)
    \Big(\frac{\bx}\eps\Big){\cal G}f(\bx)  + \mbox{ l.o.t. }
\end{equation}
We thus observe that the corrector
$u_{2\eps}(\bx):=u_2(\bx,\frac{\bx}{\eps})$ is of order $O(\eps^2)$ in
the $L^2$ sense, say. In the sense of distributions, however, the
corrector may be of order $o(\eps^m)$ for all integer $m$ in the sense
that $\int_D M(\bx) u_{2\eps}(\bx)d\bx\ll\eps^m$ for all $m$ when
$M(\bx)u_0(\bx)\in {\cal C}^\infty_0(D)$.

A similar behavior occurs for the random corrector
\begin{equation}
  \label{eq:v1eps}
  v_{1\eps}(\bx,\omega) =\dint_D -G(\bx,\by) q\Big(\dfrac{\by}\eps,\omega\Big)
  u_0(\by) d\by.
\end{equation}
Theorem \ref{thm:oscintmd} shows that $(v_{1\eps},M(\bx))$ is of order
$O(\eps^{\frac d2})$ for $M(\bx)$ and $u_0(\bx)$ sufficiently smooth
and that $\eps^{-\frac d2}(v_{1\eps},M(\bx))$ converges in
distribution to a Gaussian random variable. This result, however, does
not hold in the $L^2(D)-$sense for $d\geq4$ when $G(\bx,\by)$ is the
fundamental solution of the Helmholtz equation $-\Delta+q_0(\bx)$ on
$D$.  Indeed, we can prove that
\begin{proposition}
  \label{prop:l2norm}
  Provided that $u_0(\bx)$ and $\hat R(\bxi)$ are sufficiently smooth,
  we obtain that:
  \begin{equation}\label{eq:convv1}
  \E\{v_{1\eps}^2(\bx,\omega)\} \sim \left\{
    \begin{array}{ll}
      \eps^d \hat R(\bzero) \dint_{D} G^2(\bx,\by) u_0^2(\by)d\by 
                \quad& 1\leq d\leq 3\\
      \eps^4 |\ln\eps|  
    \dfrac{(2\pi)^4\hat R(\bzero)}{c_4} u_0^2(\bx) & d=4 \\
      \eps^4 u_0^2(\bx) (2\pi)^d \dint_{\Rm^d}
       \dfrac{\hat R(\bxi)}{|\bxi|^4} d\bxi & d\geq5.
    \end{array}\right.
   \end{equation}
   Here $a_\eps\sim b_\eps$ means $a_\eps=b_\eps(1+o(1))$.
\end{proposition}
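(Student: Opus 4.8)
The plan is to work with the exact second-moment formula
\begin{displaymath}
  \E\{v_{1\eps}^2(\bx,\omega)\} = \dint_{D^2} G(\bx,\by)G(\bx,\bz)
    R\Big(\dfrac{\by-\bz}\eps\Big) u_0(\by)u_0(\bz)\, d\by d\bz
\end{displaymath}
and to pass to the Fourier domain in the difference variable, exactly as in the proof of Lemma \ref{lem:boundgqgq}. Writing $R(\bt)=(2\pi)^{-d}\int_{\Rm^d} \hat R(\bxi)e^{i\bxi\cdot\bt}d\bxi$ and setting $H_\bx(\by):=G(\bx,\by)u_0(\by)$ (extended by zero outside $D$, with Fourier transform $\hat H_\bx$), one obtains the clean representation
\begin{displaymath}
  \E\{v_{1\eps}^2(\bx,\omega)\} = \dfrac{1}{(2\pi)^d}\dint_{\Rm^d}
    \hat R(\bxi)\,\big|\hat H_\bx(\bxi/\eps)\big|^2\, d\bxi
  = \dfrac{\eps^d}{(2\pi)^d}\dint_{\Rm^d}\hat R(\eps\bp)\,\big|\hat H_\bx(\bp)\big|^2\, d\bp.
\end{displaymath}
The entire proposition then becomes a statement about the interplay between the bounded, smooth cutoff $\hat R(\eps\bp)$ (which is nonnegative by Bochner) and the large-$\bp$ decay of $\big|\hat H_\bx(\bp)\big|^2$, which is in turn dictated by the diagonal singularity of the Green's function.

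For $1\le d\le3$ the function $H_\bx$ lies in $L^2(D)$ by \eqref{eq:intgreen}, so $\big|\hat H_\bx\big|^2$ is integrable; since $\hat R$ is bounded and continuous with $\hat R(\eps\bp)\to\hat R(\bzero)$, dominated convergence together with the Plancherel identity yields $\E\{v_{1\eps}^2\}\sim\eps^d\hat R(\bzero)\int_D G^2(\bx,\by)u_0^2(\by)d\by$, which is the first line.

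For $d\ge4$ the key point is that, $G(\bx,\by)$ being the fundamental solution of $-\Delta+q_0(\bx)$, it carries the Newtonian singularity $G(\bx,\by)\sim\gamma_d|\bx-\by|^{-(d-2)}$ of the free operator. Hence near $\by=\bx$ one has $H_\bx(\by)\approx u_0(\bx)\gamma_d|\bx-\by|^{-(d-2)}$, and from the Fourier symbol $|\bp|^{-2}$ of $(-\Delta)^{-1}$ (the constant $\gamma_d$ cancels) one gets $\big|\hat H_\bx(\bp)\big|^2\sim u_0^2(\bx)|\bp|^{-4}$ as $|\bp|\to\infty$. For $d\ge5$ the integral $\int_{\Rm^d}\hat R(\bxi)|\bxi|^{-4}d\bxi$ converges, and substituting this asymptotic form into the $\bxi$-representation and passing to the limit by dominated convergence (using the bound $\big|\hat H_\bx(\bp)\big|^2\lesssim(1+|\bp|)^{-4}$, which dominates $\eps^{-4}\big|\hat H_\bx(\bxi/\eps)\big|^2$ by $\hat R(\bxi)|\bxi|^{-4}$) gives the third line. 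For the critical case $d=4$ the integrand $|\bxi|^{-4}$ is no longer integrable at the origin, so domination fails; the cutoff $\hat R(\eps\bp)$ (equivalently, the finite size of $D$ seen at scale $\eps$) truncates the divergence at $|\bxi|\sim\eps$, which produces the factor $|\ln\eps|$, while the coefficient $c_4$ is fixed by the angular integral over $S^3$ (i.e.\ by $|S^3|=2\pi^2$ and $\gamma_4$).

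The main technical obstacle is the rigorous justification, for $d\ge4$, of replacing $\hat H_\bx(\bp)$ by its leading high-frequency profile $u_0(\bx)|\bp|^{-2}$ inside the integral. Concretely, one must show that the remainder $H_\bx(\by)-u_0(\bx)\chi(\by)\gamma_d|\bx-\by|^{-(d-2)}$, with $\chi$ a smooth cutoff near $\bx$, either lies in $L^2$ or contributes at strictly lower order than $\eps^4$ (resp.\ $\eps^4|\ln\eps|$). This rests on three ingredients: (i) the smoothness of $u_0$, which lets one absorb the variation $u_0(\by)-u_0(\bx)$ at the cost of one power of $|\bx-\by|$ and thereby renders that piece square integrable; (ii) control of the subleading part of the parametrix of $-\Delta+q_0$, which is less singular than the Newtonian term by one order and so does not reach the critical integrability threshold; and (iii) the smoothness of $\hat R$, guaranteeing that $\hat R(\eps\bp)-\hat R(\bzero)$ only produces a lower-order correction on the relevant region $|\bp|\lesssim\eps^{-1}$. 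Once these remainders are shown to be negligible, the three regimes follow from the elementary behavior of $\int_{|\bp|<1/\eps}|\bp|^{-4}d\bp$ in dimensions $d\le3$, $d=4$, and $d\ge5$.
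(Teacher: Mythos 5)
Your proposal is correct and follows essentially the same route as the paper: the exact variance formula, passage to the Fourier domain in the difference variable, dominated convergence for $1\le d\le 3$, and for $d\ge 4$ the extraction of the Newtonian singularity of $G$ with $u_0$ frozen at $\bx$, the truncation at $|\bxi|\sim\eps$ producing the $|\ln\eps|$ at $d=4$, and dominated convergence against $\hat R(\bxi)|\bxi|^{-4}$ for $d\ge 5$ --- including the same deferral of the remainder estimates (less singular parametrix terms, H\"older continuity of $u_0$) that the paper also leaves to the reader. The only discrepancy is the power of $2\pi$ in the $d\ge 5$ line, where your (self-consistent) normalization yields $(2\pi)^{-d}$ in place of the paper's stated $(2\pi)^{d}$; this is a Fourier-convention bookkeeping issue, not a gap in the argument.
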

\begin{proof}
  We calculate:
  \begin{equation}
    \label{eq:v1epsint}
    \E\{v_{1\eps}^2(\bx,\omega)\} = \dint_{D^2} G(\bx,\by)G(\bx,\bz)
    R\Big(\dfrac{\by-\bz}\eps\Big) u_0(\by) u_0(\bz) d\by d\bz.
  \end{equation}
  Extending $G(\bx,\cdot)$ by $0$ outside of $D$, by the Parseval
  equality this is equal to
  \begin{displaymath}
    (2\pi)^d\dint_{\Rm^{2d}} |{\cal F}_{\by\to\bxi} 
     (G(\bx,\by)u_0(\by))|^2(\bxi)
    \eps^d \hat R(\eps\bxi) d\bxi,
  \end{displaymath}
  where ${\cal F}_{\bx\to\bxi}$ is the Fourier transform from $\bx$ to
  $\bxi$.  In dimension $1\leq d\leq 3$, since $\hat R(\eps\bxi)\to
  \hat R(\bzero)$ pointwise, the Lebesgue dominated convergence
  theorem yields the result. In dimension $d\geq4$, however, the Green
  function is no longer integrable and the integral is larger than
  $\eps^d$.
  
  Let us consider the cases $d\geq4$. We first replace $G(\bx,\by)$ by
  $c_d|\bx-\by|^{2-d}$ where $c_d$ is the measure of the unit sphere
  $S^{d-1}$. The difference $G(\bx,\by)-c_d|\bx-\by|^{2-d}$ is a
  function bounded by $C|\bx-\by|^{3-d}$, which yields a smaller
  contribution to $\E\{v_{1\eps}^2\}$. We leave the details to the
  reader. We also replace $u_0(\by)$ by $u_0(\bx)$, up to an error
  bounded by $|\bx-\by|^\alpha$ as soon as $u_0(\bx)$ is of class
  ${\cal C}^{0,\alpha}(D)$. This contribution again provides a lower
  order term to $\E\{v_{1\eps}^2\}$. Similarly, we replace 
  $u_0(\bz)$ by $u_0(\bx)$ and thus obtain that 
  \begin{displaymath}
    \E\{v_{1\eps}^2(\bx,\omega)\} \sim u_0^2(\bx)
    \dint_{D^2} \dfrac{1}{c_d|\bx-\by|^{d-2}}
     \dfrac{1}{c_d|\bx-\bz|^{d-2}}R\Big(\dfrac{\by-\bz}\eps\Big) d\by d\bz.
  \end{displaymath}
  Let $\alpha>0$ and $B(\bx,\alpha)$ the ball of center $\bx$ and
  radius $\alpha$ so that $B(\bx,\alpha)\subset D$. Because all
  singularities occur when $\by$ and $\bz$ are in the vicinity of
  $\bx$, we use the proof of the case $1\leq d\leq 3$ to show that up
  to a term of order $\eps^{d}$, we can replace $D$ by $B(\bx,\alpha)$
  so that
  \begin{equation}  \label{eq:v1interm}
    \E\{v_{1\eps}^2(\bx,\omega)\} \sim u_0^2(\bx)
    \dint_{B^2(\bzero,\alpha)} \dfrac{1}{c_d|\by|^{d-2}}
     \dfrac{1}{c_d|\bz|^{d-2}}R\Big(\dfrac{\by-\bz}\eps\Big) d\by d\bz.
  \end{equation}
  Now for $d\geq 5$, using the dominated convergence theorem, we can
  replace $B(\bzero,\alpha)$ by $\Rm^d$ because the Green function is
  square integrable at infinity, whence
  \begin{displaymath}
     \E\{v_{1\eps}^2(\bx,\omega)\} \sim u_0^2(\bx)
    \dint_{\Rm^{2d}} \dfrac{1}{c_d|\by|^{d-2}}
     \dfrac{1}{c_d|\bz|^{d-2}}R\Big(\dfrac{\by-\bz}\eps\Big) d\by d\bz.
  \end{displaymath}
  This, however, by the Parseval equality, is equal to
  \begin{displaymath}
    \E\{v_{1\eps}^2(\bx,\omega)\} \sim u_0^2(\bx) (2\pi)^d
    \dint_{\Rm^{d}} \dfrac{1}{|\bxi|^4} \eps^d \hat R(\eps\bxi) d\bxi
    = u_0^2(\bx) (2\pi)^d
    \dint_{\Rm^{d}} \dfrac{1}{|\bxi|^4} \eps^4 \hat R(\bxi) d\bxi,
  \end{displaymath}
  since the Fourier transform of the fundamental solution of the
  Laplacian is $|\bxi|^{-2}$.
  
  When $d=4$, we come back to \eqref{eq:v1interm}, and replace one of
  the integrals on $B(\bzero,\alpha)$ by an integral on $\Rm^d$ using
  again the dominated convergence theorem. This yields the term
  \begin{displaymath}
    \begin{array}{l}
    \dint_{B(\bzero,\alpha)\times\Rm^d}
    \dfrac{1}{c_4^2|\by|^2|\bz|^2} R\Big(\dfrac{\by-\bz}\eps\Big) d\by d\bz
   = \dint_{B(\bzero,\alpha)\times\Rm^d}
     \dfrac{(2\pi)^4\eps^2}{c_4|\by|^2|\bxi|^2} \hat R(\bxi) 
     e^{i\frac{\bxi\cdot\by}\eps} d\bxi d\by\\
    =\dint_{B(\bzero,\frac{\alpha}\eps)\times\Rm^d}
     \dfrac{(2\pi\eps)^4}{c_4 |\by|^2|\bxi|^2} \hat R(\bxi)
     e^{i\bxi\cdot\by} d\bxi d\by = 
    \hat R(\bzero) (2\pi\eps)^4 \dint_{B(\bzero,\frac{\alpha}\eps)}
    \dfrac{1}{c_4^2|\by|^4} d\by +O(\eps^4) \\
    = \dfrac{\hat R(\bzero) (2\pi\eps)^4}{c_4}
     \dint_0^{\frac\alpha\eps} \dfrac{|\by|^3}{|\by|^4} d|\by| +O(\eps^4)
    =  \dfrac{\hat R(\bzero) (2\pi\eps)^4}{c_4} |\ln\eps| +O(\eps^4).
    \end{array}
  \end{displaymath}
  Here, we have assumed that $|\hat R(\bxi)-\hat R(\bzero)|$ was
  bounded by $C|\bxi|^\beta$ for some $\beta>0$.
\end{proof}

In all dimensions, we thus obtain that $\eps^{-\frac
  d2}v_{1\eps}(\bx,\omega)$ converges (weakly and in distribution) to
a limit $u_1(\bx,\omega)=-\int_D G(\bx,\by)u_0(\by) dW_\by$. In
dimensions $1\leq d\leq 3$, we have proved that $u_1$ was the limit of
the corrector to homogenization $\eps^{-\frac d2}(u_\eps-u_0)$. The
above calculation shows that the limit $u_1$ captures all the energy
in the oscillations of the homogenization corrector $v_{1\eps}$ in the
sense that the limit of the $L^2$ norm $\E\{v_{1\eps}^2(x,v)\}$ is
equal to the $L^2$ norm $\E\{u_1^2(x,v)\}$.

In higher dimensions $d\geq4$, as in the case of homogenization in
periodic media, some energy is lost while passing to the (weak) limit.
The corrector $u_{1\eps}=\eps^{-\frac d2}v_{1\eps}$ converges weakly and in
distribution to the limit $u_1$. However, while the energy of the
limiting corrector is $\eps^{\frac d2}
(\E\{\|u_1\|^2_{L^2(D}\}^{\frac12}$, the energy of the true corrector
$(\E\{\|v_{1\eps}\|^2_{L^2(D}\}^{\frac12}$ is of order $O(\eps^2)$ for
$d\geq5$ and of order $O(\eps^2|\ln\eps|^{\frac12})$ for $d=4$.
Most of the energy of the correctors is lost in passing from
$u_{1\eps}$ to its weak limit $u_1$.

%
\section{Correctors for one-dimensional elliptic problems}
\label{sec:1dbrdypbs}
%
In this section, we consider the homogenization of the following
one-dimensional elliptic problems:
\begin{equation}
  \label{eq:1dellipt}
  \begin{array}{l}
  -\dr{}x a_\eps(x,\omega) \dr{}x u_\eps + (q_0+q_\eps(x,\omega)) u_\eps =
   \rho_\eps(x,\omega) f(x) ,\qquad x\in D=(0,1),\\
  u_\eps(0)=u_\eps(1)=0.
  \end{array}
\end{equation}
We consider homogeneous Dirichlet conditions to simplify the
presentation. The coefficients $a_\eps(x,\omega)$ and
$\rho_\eps(x,\omega)$ are uniformly bounded from above and below:
$0<a_0\leq a_\eps(x,\omega), \rho_\eps(x,\omega)\leq a_0^{-1}$.  The
(deterministic) absorption term $q_0$ is assumed to be a non-negative
constant. The generalization to a non-negative smooth function
$q_0(x)$ can be done.

We assume that $a_\eps(x,\omega)=a(\frac x\eps,\omega)$,
$q_\eps(x,\omega)=q(\frac x\eps,\omega)$, and
$\rho_\eps(x,\omega)=\rho(\frac x\eps,\omega)$, where $a(x,\omega)$,
$q(x,\omega)$, and $\rho(x,\omega)$ are strictly stationary processes
on an abstract probability space $(\Omega,{\cal F},\P)$. We will modify
the mean-zero process $q_\eps(x,\omega)$ as in the preceding section
and assume here to simplify that $q(x,\omega)$ is bounded $\P-$a.s.
We also assume that the cross-correlations $R_{fg}(\bx)$
are integrable for $\{f,g\}\in\{a,q,\rho\}$, where
\begin{equation}
  \label{eq:crosscorr}
  R_{fg}(\bx)=\E\{f(\by,\omega)g(\by+\bx,\omega)\}.
\end{equation}
We also assume that the coefficients are jointly strongly mixing in
the sense of \eqref{eq:strongmixing}, where for two Borel sets $A$ and
$B$ in $\Rm^d$, we denote by ${\cal F}_A$ and ${\cal F}_B$ the
$\sigma$-algebras generated by the random fields $a(\bx,\omega)$,
$q(\bx,\omega)$, and $\rho(\bx,\omega)$. We still assume that the
$\rho$-mixing coefficient $\varphi(r)$ is integrable and such that
$\varphi^{\frac12}$ is also integrable.

In the case where $q_\eps=0$ and $\rho_\eps=0$, the corrector to the
homogenization limit $u_0$ has been considered in \cite{BP-AA-99}. For
general sufficiently mixing coefficients $a_\eps$ with positive
variance $\sigma^2=2\int_0^\infty \E\{a(0)a(t)\}dt>0$, we obtain that
$u_\eps-u_0$ is of order $\sqrt\eps$ and converges in distribution to
a Gaussian process. This section aims at generalizing the result to
\eqref{eq:1dellipt} using the results of the preceding section and a
change of variables based on harmonic coordinates \cite{Koz-MSb-79}.

Let us introduce the change of variables
\begin{equation}
  \label{eq:zeps}
  z_\eps(x) = a^*\dint_0^x \dfrac{1}{a_\eps(t)} dt,\qquad
  \dr{z_\eps}x = \dfrac{a^*}{a_\eps(x)},\qquad
   a^*=\dfrac{1}{\E\{a^{-1}\}},
\end{equation}
and $\tilde u_\eps(z)=u_\eps(x)$. Then we find, with $x=x(z_\eps)$
that
\begin{equation}
  \label{eq:tildueps}
  \begin{array}{l}
  -(a^*)^2 \drr{}z \tilde u_\eps + a^*q_0 \tilde u_\eps
   + a_\eps [(1-a_\eps^{-1}a^*)q_0+q_\eps] \tilde u_\eps
   = a_\eps  \rho_\eps f,\qquad 0<z<z_\eps(1)\\
  \tilde u_\eps(0)=\tilde u_\eps(z_\eps(1))=0.
  \end{array}
\end{equation}
Let us introduce the following Green's function
\begin{equation}
  \label{eq:GF}
  \begin{array}{l}
  -a^* \drr{}x G(x,y;L) +q_0 G(x,y;L)= \delta(x-y) \\
   G(0,y;L)=G(L,y;L)=0.
  \end{array}
\end{equation}
Then, defining 
\begin{equation}
  \label{eq:tildeqeps}
  \tilde q_\eps(x,\omega)=(1-a_\eps^{-1}(x,\omega)a^*)q_0+q_\eps(x,\omega),
\end{equation}
we find that 
\begin{displaymath}
  \begin{array}{rcl}
 \tilde u_\eps(z) &=& \dint_0^{z_\eps(1)} G(z,y;z_\eps(1))
     (\rho_\eps f-\tilde  q_\eps \tilde u_\eps)(x(y)) 
   \dfrac{a_\eps}{a^*}(x(y)) dy,\\
  u_\eps(x) &=& \dint_0^1 G(z_\eps(x),z_\eps(y);z_\eps(1))
          (\rho_\eps f-\tilde q_\eps u_\eps)(y) dy.
  \end{array}
\end{displaymath}
We recast the above equation as 
\begin{equation}
  \label{eq:uepsint0}
  u_\eps(x,\omega) = {\cal G}_\eps (\rho_\eps f-\tilde q_\eps u_\eps),\qquad
   {\cal G}_\eps u(x) = \dint_0^1 G(z_\eps(x),z_\eps(y);z_\eps(1)) u(y)dy.
\end{equation}
After one more iteration, we obtain the following integral equation:
\begin{equation}
  \label{eq:intueps}
  u_\eps = {\cal G}_\eps \rho_\eps f - 
   {\cal G}_\eps \tilde q_\eps {\cal G}_\eps \rho_\eps f
   + {\cal G}_\eps \tilde q_\eps {\cal G}_\eps \tilde q_\eps u_\eps.
\end{equation}
Since $a_0a^*x\leq z_\eps(x,\omega)\leq a^*a_0^{-1}x$ $\P-$a.s., the
Green's operator ${\cal G}_\eps$ is bounded $\P-$a.s. and the results
of Lemma \ref{lem:boundgqgq} generalize to the case where the operator
${\cal G}_\eps$ replaces ${\cal G}$. As in \eqref{eq:pbigradius}, we
thus modify $\tilde q_\eps$ (i.e. we modify $a_\eps$ and $q_\eps$) on a
set of measure $\eps$ so that $\|{\cal G}_\eps \tilde q_\eps {\cal
  G}_\eps \tilde q_\eps\|\leq r<1$ and assume that [H3] holds.

Let us introduce the notation
\begin{equation}
  \label{eq:notation1}
  \rho_\eps = \bar\rho + \delta \rho_\eps,\,\bar\rho=\E\{\rho\},\qquad
  {\cal G}_\eps = {\cal G} + \delta {\cal G}_\eps, \quad 
   {\cal G} =\E\{{\cal G}_\eps\},\qquad u_0 = {\cal G} \bar\rho f.
\end{equation}
We also define 
\begin{equation}
  \label{eq:deltazeps}
  \delta z_\eps(x)=z_\eps(x)-x = \dint_0^x b\Big(\dfrac{t}\eps\Big) dt,
  \qquad b(t,\omega) = \dfrac{a^*}{a(t,\omega)}-1.
\end{equation}
We first obtain the
\begin{lemma}
  \label{lem:bdgmgeps}
  We have that
  \begin{equation}
    \label{eq:bddeltazeps}
    \E \{|\delta z_\eps(x) \delta z_\eps(y)|\} \lesssim \eps,\qquad
     0\leq x,y \leq 1.
  \end{equation}
  The operator ${\cal G}_\eps$ may be decomposed as 
  \begin{equation}
    \label{eq:decGeps}
    {\cal G}_\eps = {\cal G} + {\cal G}_{1\eps} + {\cal R}_\eps, 
  \end{equation}
  where 
  \begin{equation}
    \label{eq:Gaeps}
    {\cal G}_{1\eps}f(x) = \dint_0^1  \Big(\delta z_\eps(x)\pdr{}x
   +  \delta z_\eps(y)\pdr{}y+  \delta z_\eps(1)\pdr{}L\Big)G(x,y;1)
         f(y) dy.
  \end{equation}
  We also have the following estimates
  \begin{equation}
    \label{eq:estimG1epsReps}
    \E\{\|{\cal G}_{1\eps}\|^2\}\lesssim \eps,\qquad
    \E\{\|{\cal R}_\eps\|\} \lesssim \eps.
  \end{equation}
\end{lemma}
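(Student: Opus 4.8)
The plan is to reduce every assertion of the lemma to two moment estimates on the scalar quantity $\delta z_\eps(x)=\int_0^x b(t/\eps)\,dt$, combined with a second-order Taylor expansion of the Green's kernel in its three arguments. I would first record the structural facts about $b(t,\omega)=a^*/a(t,\omega)-1$: it is mean zero by the very choice $a^*=1/\E\{a^{-1}\}$, it is bounded $\P$-a.s. since $a$ is bounded above and below, and being a measurable function of $a$ it inherits the $\rho$-mixing coefficient $\varphi$ of $a$; in particular $|\E\{b(0)b(s)\}|\lesssim\varphi(2|s|)$ is integrable on $\Rm$.

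Next I would establish the two moment bounds $\E\{\delta z_\eps(x)^2\}\lesssim\eps$ and $\E\{\delta z_\eps(x)^4\}\lesssim\eps^2$, uniformly in $x\in[0,1]$. For the second moment, $\E\{\delta z_\eps(x)^2\}=\int_{[0,x]^2}\E\{b(s/\eps)b(t/\eps)\}\,ds\,dt$; integrating along diagonals and rescaling $s-t=\eps r$ produces $\eps\,x\int_{\Rm}|\E\{b(0)b(r)\}|\,dr\lesssim\eps$, and the first assertion $\E\{|\delta z_\eps(x)\delta z_\eps(y)|\}\lesssim\eps$ then follows by Cauchy--Schwarz. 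For the fourth moment I would write $\E\{\delta z_\eps(x)^4\}$ as the integral over $[0,x]^4$ of the four-point correlation of $b$; since $b$ is bounded, Lemma \ref{lem:mixingfourpoints} applies and bounds this correlation (after splitting according to the ordering of the four arguments) by a product $\varphi^{\frac12}(\cdot/\eps)\varphi^{\frac12}(\cdot/\eps)$ of two pairwise distances. Rescaling, each factor integrated against its difference variable yields a power of $\eps$ by integrability of $\varphi^{\frac12}$, while the two remaining slow variables range over a bounded set, giving $\E\{\delta z_\eps(x)^4\}\lesssim\eps^2$.

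I would then set up the decomposition by expanding $\Phi_{x,y}(s)=G\big(x+s\,\delta z_\eps(x),\,y+s\,\delta z_\eps(y);\,1+s\,\delta z_\eps(1)\big)$, whose endpoints are the kernels of ${\cal G}_\eps$ (at $s=1$) and of the operator with kernel $G(x,y;1)$ (at $s=0$). The Taylor identity $\Phi_{x,y}(1)=\Phi_{x,y}(0)+\Phi_{x,y}'(0)+\int_0^1(1-s)\Phi_{x,y}''(s)\,ds$ identifies $\Phi_{x,y}'(0)$ with the integrand of ${\cal G}_{1\eps}$ in \eqref{eq:Gaeps} and the last term with the kernel of ${\cal R}_\eps$; since $\E\{\delta z_\eps\}=0$ we get $\E\{{\cal G}_{1\eps}\}=0$, so this is consistent with ${\cal G}=\E\{{\cal G}_\eps\}$ up to the $O(\eps)$ remainder. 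The main obstacle is that $G(\cdot,\cdot;L)$ has a kink on the diagonal $x=y$, so its second derivatives carry a Dirac mass there and $\Phi''$ is a priori uncontrolled. This is resolved by monotonicity: because $a_\eps>0$, $z_\eps$ is increasing, so $x<y$ forces $(1-s)x+s\,z_\eps(x)<(1-s)y+s\,z_\eps(y)$ for every $s\in[0,1]$; the entire evaluation segment thus stays strictly on one side of the diagonal, where the explicit Green's function (a smooth combination of hyperbolic functions with $\sinh$ bounded away from zero, since $z_\eps(1)\in[a_0a^*,a^*/a_0]$) is $C^2$ with all first and second derivatives uniformly bounded.

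Finally I would convert these into operator-norm bounds through the Hilbert--Schmidt norm, which dominates the $L^2(D)$ operator norm. For ${\cal G}_{1\eps}$, whose kernel is a sum of terms $\delta z_\eps(\cdot)\,\partial_\bullet G(x,y;1)$ with bounded (and square-integrable) first derivatives, $\E\{\|{\cal G}_{1\eps}\|^2\}\leq\int_{D^2}\E\{|K_{1\eps}|^2\}$ expands into products $\E\{\delta z_\eps(a)\delta z_\eps(b)\}$ times derivative factors, each such expectation being $\lesssim\eps$ by the second-moment bound, so $\E\{\|{\cal G}_{1\eps}\|^2\}\lesssim\eps$. For the remainder, the bounded second derivatives give the pointwise estimate $|K_{{\cal R}_\eps}(x,y)|\lesssim\delta z_\eps(x)^2+\delta z_\eps(y)^2+\delta z_\eps(1)^2$, whence $\|{\cal R}_\eps\|_{HS}^2\lesssim\int_0^1\delta z_\eps(x)^4\,dx+\delta z_\eps(1)^4$; taking expectations and invoking the fourth-moment bound yields $\E\{\|{\cal R}_\eps\|_{HS}^2\}\lesssim\eps^2$, so that $\E\{\|{\cal R}_\eps\|\}\leq\E\{\|{\cal R}_\eps\|_{HS}\}\leq(\E\{\|{\cal R}_\eps\|_{HS}^2\})^{\frac12}\lesssim\eps$, which completes the proof.
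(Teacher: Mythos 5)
Your proof is correct and follows essentially the same route as the paper: mixing-based second- and fourth-moment bounds on $\delta z_\eps$ (via Lemma \ref{lem:mixingfourpoints}), a Taylor expansion of $G(z_\eps(x),z_\eps(y);z_\eps(1))$ on each side of the diagonal where the kernel is $C^2$ with $\P$-a.s. uniformly bounded derivatives, and a quadratic remainder controlled by \eqref{eq:bddeltazeps}. You make explicit two points the paper leaves implicit --- that monotonicity of $z_\eps$ keeps the whole Taylor segment on one side of the diagonal, and the Hilbert--Schmidt domination used to pass to operator norms --- but these are elaborations of the same argument rather than a different one.
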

\begin{proof}
  We first use the fact that
  \begin{displaymath}
    \E \{|\delta z_\eps(x) \delta z_\eps(y)|\}
    \leq \Big(\E \{(\delta z_\eps(x) \delta z_\eps(y))^2\}\Big)^{\frac12}.
  \end{displaymath}
  Denoting by $b_\eps(x,\omega)=b(\frac t\eps,\omega)$, we have to
  show that
  \begin{displaymath}
      \E \Big\{ \dint_0^x\dint_0^x\dint_0^y\dint_0^y 
     b_\eps(z_1)b_\eps(z_2) b_\eps(z_3)b_\eps(z_4) d[z_1z_2z_3z_4]\Big\}
    \lesssim \eps^2.
  \end{displaymath}
  Now using the mixing property of the mean-zero field $b_\eps$ and
  the integrability of $\varphi^{\frac12}(r)$, we obtain the result
  using \eqref{eq:bdq1234} as in the proof of Lemma \ref{lem:gqgqgf}.
  
  The integral defining ${\cal G}_\eps$ is split into two
  contributions, according as $y<x$ or $y>x$. On these two intervals,
  $G(x,y;L)$ is twice differentiable, and we thus have the expansion
  \begin{displaymath}
    G(z_\eps(x),z_\eps(y);z_\eps(1))=G(x,y;1)
   + \Big(\delta z_\eps(x)\pdr{}x
   +  \delta z_\eps(y)\pdr{}y+  \delta z_\eps(1)\pdr{}L\Big)G(x,y;1)
   + r_\eps,
  \end{displaymath}
  where the Lagrange remainder
  $r_\eps=r_\eps(x,z_\eps(x),y,z_\eps(y),z_\eps(1))$ is quadratic in
  the variables $(\delta z_\eps(x),\delta z_\eps(y),\delta z_\eps(1))$
  and involves second-order derivatives of $G(x,y;1)$ at points
  $(\xi,\zeta,L)$ between $(x,y;1)$ and
  $(z_\eps(x),z_\eps(y);z_\eps(1))$. 
  
  From \eqref{eq:bddeltazeps} and the fact that second-order
  derivatives of $G$ are $\P-$a.s. uniformly bounded on each interval
  $y<x$ and $y>x$ (we use here again the fact that $a_0a^*x\leq
  z_\eps(x,\omega)\leq a^*a_0^{-1}x$ $\P-$a.s.), we thus obtain that
  $\E\{|r_\eps(.)|\}\lesssim \eps$.  This also shows the bound for
  $\E\{\|{\cal R}_\eps\|\}$ in \eqref{eq:estimG1epsReps}. The bound
  for $\E\{\|{\cal G}_{1\eps}\|^2\}$ is obtained similarly.
\end{proof}

Because we have assumed that $\tilde q_\eps$ and $\rho_\eps$ were
bounded $\P-$a.s., we can replace ${\cal G}_\eps$ by ${\cal G}+{\cal
  G}_{1\eps}$ in \eqref{eq:intueps} up to an error of order $\eps$ in
$L^1(\Omega;L^2(D))$. The case of $q_\eps$ and $\rho_\eps$ bounded on
average would require to address their correlation with $r_\eps$
defined in the proof of the preceding lemma. This is not considered
here.

We recast \eqref{eq:intueps} as
\begin{equation}
  \label{eq:uepsmu0}
  u_\eps-u_0 = ({\cal G}_\eps \rho_\eps-{\cal G}\bar\rho)f
  -{\cal G}_\eps \tilde q_\eps{\cal G}_\eps \rho_\eps f
  +  {\cal G}_\eps \tilde q_\eps{\cal G}_\eps \tilde q_\eps(u_\eps-u_0)
  +{\cal G}_\eps \tilde q_\eps{\cal G}_\eps \tilde q_\eps{\cal G}f.
\end{equation}
Because $G(z_\eps(x),z_\eps(y);z_\eps(1))$ and $\rho_\eps$ are
uniformly bounded $\P-$a.s., the proof of Lemma \ref{lem:boundgqgq}
generalizes to give us that 
\begin{equation}
  \label{eq:bdopsoeps}
  \E\{ \|{\cal G}_\eps\tilde q_\eps{\cal
  G}_\eps\tilde q_\eps \|^2\} + 
   \E\{\|  {\cal G}_\eps \tilde q_\eps{\cal G}_\eps \rho_\eps f \|^2\}
   +\E\{\| ({\cal G}_\eps \rho_\eps-{\cal G}\bar\rho)f \|^2\}
   \lesssim\eps.
\end{equation}
So far, since moreover $\|{\cal G}_\eps \tilde q_\eps {\cal G}_\eps
\tilde q_\eps\|\leq r<1$, we have thus obtained the following result:
\begin{lemma}
  \label{lem:estcorr1d}
  Let $u_\eps$ be the solution to the heterogeneous problem
  \eqref{eq:1dellipt} and $u_0$ the solution to the corresponding
  homogenized problem. Then we have that 
  \begin{equation}
    \label{eq:esterror}
    \big(\E\{\|u_\eps-u_0\|^2\}\big)^{\frac12} \lesssim \sqrt\eps \|f\|.
  \end{equation}
\end{lemma}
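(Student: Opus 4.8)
The plan is to read the estimate off the already-established identity \eqref{eq:uepsmu0} by isolating the implicit term and inverting. First I would move the term ${\cal G}_\eps \tilde q_\eps {\cal G}_\eps \tilde q_\eps (u_\eps-u_0)$ in \eqref{eq:uepsmu0} to the left-hand side, obtaining
\begin{displaymath}
  (I-{\cal G}_\eps \tilde q_\eps {\cal G}_\eps \tilde q_\eps)(u_\eps-u_0)
  = ({\cal G}_\eps \rho_\eps-{\cal G}\bar\rho)f
   -{\cal G}_\eps \tilde q_\eps{\cal G}_\eps \rho_\eps f
   +{\cal G}_\eps \tilde q_\eps{\cal G}_\eps \tilde q_\eps{\cal G}f.
\end{displaymath}
Because $\tilde q_\eps$ was modified on a set of measure $\eps$ precisely so that $\|{\cal G}_\eps \tilde q_\eps {\cal G}_\eps \tilde q_\eps\|\leq r<1$ holds $\P$-a.s., the operator $I-{\cal G}_\eps \tilde q_\eps {\cal G}_\eps \tilde q_\eps$ is invertible $\P$-a.s. with $\|(I-{\cal G}_\eps \tilde q_\eps {\cal G}_\eps \tilde q_\eps)^{-1}\|\leq (1-r)^{-1}$ by the Neumann series. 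This yields the pathwise bound
\begin{displaymath}
  \|u_\eps-u_0\| \leq \dfrac{1}{1-r}\Big(
   \|({\cal G}_\eps \rho_\eps-{\cal G}\bar\rho)f\|
   + \|{\cal G}_\eps \tilde q_\eps{\cal G}_\eps \rho_\eps f\|
   + \|{\cal G}_\eps \tilde q_\eps{\cal G}_\eps \tilde q_\eps{\cal G}f\|
  \Big),
\end{displaymath}
where $\|\cdot\|$ is the $L^2(D)$ norm.

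Next I would square this bound, take expectations, and use $(a+b+c)^2\leq 3(a^2+b^2+c^2)$ to reduce the claim to controlling the three terms on the right in $L^2(\Omega)$. The first two are bounded directly by \eqref{eq:bdopsoeps}, which gives $\E\{\|({\cal G}_\eps \rho_\eps-{\cal G}\bar\rho)f\|^2\}\lesssim\eps$ and $\E\{\|{\cal G}_\eps \tilde q_\eps{\cal G}_\eps \rho_\eps f\|^2\}\lesssim\eps$ (both already carrying the correct dependence on $\|f\|$). For the third term, the factor ${\cal G}f$ is deterministic with $\|{\cal G}f\|\lesssim\|f\|$ since ${\cal G}$ is bounded, so by submultiplicativity of the operator norm together with the operator estimate in \eqref{eq:bdopsoeps},
\begin{displaymath}
  \E\{\|{\cal G}_\eps \tilde q_\eps{\cal G}_\eps \tilde q_\eps{\cal G}f\|^2\}
  \leq \E\{\|{\cal G}_\eps \tilde q_\eps{\cal G}_\eps \tilde q_\eps\|^2\}\,\|{\cal G}f\|^2
  \lesssim \eps\,\|f\|^2.
\end{displaymath}
Collecting the three contributions gives $\E\{\|u_\eps-u_0\|^2\}\lesssim\eps\|f\|^2$, and taking square roots yields \eqref{eq:esterror}.

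The substantive analytic work has in fact already been carried out in Lemma \ref{lem:bdgmgeps} and in the derivation of \eqref{eq:bdopsoeps}, so the remaining step is essentially bookkeeping. The one point requiring genuine care is that the inversion must be carried out pathwise, which needs the almost-sure bound $r<1$ rather than merely a bound in expectation; this is exactly what the modification of $\tilde q_\eps$ on the exceptional set furnished. A secondary point is that the last source term couples a \emph{random} operator to the \emph{fixed} function ${\cal G}f$, so it must be estimated through the $L^2(\Omega)$ operator-norm bound in \eqref{eq:bdopsoeps} rather than a pathwise spectral bound, after which the deterministic estimate $\|{\cal G}f\|\lesssim\|f\|$ closes the argument.
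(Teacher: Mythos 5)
Your proposal is correct and follows essentially the same route the paper takes: it reads the bound off \eqref{eq:uepsmu0} by inverting $I-{\cal G}_\eps\tilde q_\eps{\cal G}_\eps\tilde q_\eps$ pathwise (using the almost-sure bound $\|{\cal G}_\eps\tilde q_\eps{\cal G}_\eps\tilde q_\eps\|\leq r<1$ secured by the modification of $\tilde q_\eps$) and then applying the $L^2(\Omega)$ estimates \eqref{eq:bdopsoeps} to the remaining source terms. The paper leaves this bookkeeping implicit, and your write-up supplies exactly the missing steps, including the correct treatment of the term ${\cal G}_\eps\tilde q_\eps{\cal G}_\eps\tilde q_\eps{\cal G}f$ via the operator-norm bound applied to the deterministic function ${\cal G}f$.
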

The estimate \eqref{eq:bdaepshelm} with $d=1$ is thus verified in the
context of the elliptic equation \eqref{eq:1dellipt}.  As a
consequence, we find that $\E\{\|u_\eps-u_0\|^2\}\lesssim\eps$ so that
by Cauchy Schwarz and \eqref{eq:bdopsoeps},
\begin{displaymath}
  \E\{\|{\cal G}_\eps \tilde q_\eps{\cal G}_\eps \tilde q_\eps(u_\eps-u_0)\|
   \} \lesssim \eps.
\end{displaymath}
It remains to exhibit the term of order $\sqrt\eps$ in $u_\eps-u_0$.
Let us introduce the decomposition
\begin{eqnarray}
  \label{eq:decueupsmu0}
   &&u_\eps-u_0 =  \Big[{\cal G}_{1\eps}\bar\rho + {\cal G}\delta \rho_\eps
   - {\cal G}\tilde q_\eps {\cal G}\bar\rho\Big] f + s_\eps,\\
   \nonumber
  &&s_\eps =  (\delta {\cal G}_\eps\delta \rho_\eps+{\cal R}_\eps \bar\rho) f
  - ({\cal G}_\eps \tilde q_\eps{\cal G}_\eps \rho_\eps-
    {\cal G}\tilde q_\eps {\cal G}\bar\rho)f
  +{\cal G}_\eps \tilde q_\eps{\cal G}_\eps \tilde q_\eps(u_\eps-u_0)
  +{\cal G}_\eps \tilde q_\eps{\cal G}_\eps \tilde q_\eps{\cal G}f.
\end{eqnarray}
\begin{lemma}
  \label{lem:seps}
  Let $f\in L^2(D)$.  We have
  \begin{equation} \label{eq:bdseps}
    \E\{\|s_\eps\|\} \lesssim \eps \|f\|.
  \end{equation}
\end{lemma}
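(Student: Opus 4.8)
The plan is to split the four groups of terms making up $s_\eps$ and bound each in $L^1(\Omega;L^2(D))$, the organizing principle being that every summand contains either a single factor ${\cal R}_\eps$ of size $\eps$, or else the product of two mean-zero fluctuations each of size $\sqrt\eps$; both situations produce an $O(\eps)$ contribution. Throughout I would use that $\tilde q_\eps$ is itself mean zero (since $a^*\E\{a_\eps^{-1}\}=1$ forces $\E\{\tilde q_\eps\}=0$) and that $\tilde q_\eps,\rho_\eps$ and the kernel of ${\cal G}_\eps$ are bounded $\P$-a.s.

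The two routine groups come first. The higher-order term ${\cal G}_\eps \tilde q_\eps{\cal G}_\eps \tilde q_\eps(u_\eps-u_0)$ is handled exactly as in the display preceding \eqref{eq:decueupsmu0}: Cauchy--Schwarz in $\Omega$ together with \eqref{eq:bdopsoeps} and Lemma \ref{lem:estcorr1d} gives $\eps^{\frac12}\cdot\eps^{\frac12}\|f\|$. The term ${\cal G}_\eps \tilde q_\eps{\cal G}_\eps \tilde q_\eps{\cal G}f$ is controlled by the ${\cal G}_\eps$-version of Lemma \ref{lem:gqgqgf}: in one dimension the random kernels $G(z_\eps(x),z_\eps(y);z_\eps(1))$ are a.s. bounded, so the proof transfers verbatim, the four-point mixing bound being applied to the mean-zero field $\tilde q_\eps$, and for $d=1$ it yields $\E\{\|\cdot\|^2\}\lesssim\eps^2\|f\|^2$. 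Likewise every summand carrying a factor ${\cal R}_\eps$ — namely ${\cal R}_\eps\bar\rho f$ and the ${\cal R}_\eps$-part of $\delta{\cal G}_\eps\delta\rho_\eps f$ — is bounded using a.s. boundedness of $\tilde q_\eps,\rho_\eps$ and $\E\{\|{\cal R}_\eps\|\}\lesssim\eps$ from \eqref{eq:estimG1epsReps}.

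Next I would expand the second group by writing ${\cal G}_\eps={\cal G}+\delta{\cal G}_\eps$ and $\rho_\eps=\bar\rho+\delta\rho_\eps$. Subtracting the leading term ${\cal G}\tilde q_\eps{\cal G}\bar\rho$ leaves seven remainders; those carrying three or four fluctuation factors are $o(\eps)$ and harmless, so the decisive ones are the three with exactly two fluctuations, ${\cal G}\tilde q_\eps{\cal G}\delta\rho_\eps$, ${\cal G}\tilde q_\eps{\cal G}_{1\eps}\bar\rho$, and ${\cal G}_{1\eps}\tilde q_\eps{\cal G}\bar\rho$ (the ${\cal R}_\eps$-parts of $\delta{\cal G}_\eps$ being already controlled). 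For each of these, and for the cross term ${\cal G}_{1\eps}\delta\rho_\eps f$ from the first group, I would expand $\E\{\|\cdot\|^2_{L^2}\}$ into a spatial integral of a four-point correlation of the jointly mixing family $\{\tilde q_\eps,\delta\rho_\eps,b_\eps\}$ (with $b_\eps$ entering through $\delta z_\eps$ inside ${\cal G}_{1\eps}$), bound it by products $\varphi^{\frac12}(\cdot/\eps)\varphi^{\frac12}(\cdot/\eps)$ via the joint version of Lemma \ref{lem:mixingfourpoints}, and integrate using the one-dimensional integrability of $r\mapsto\varphi^{\frac12}(r)$ and the boundedness of $G$ and its first derivatives off the diagonal, just as in the $A_2$-type estimate of Lemma \ref{lem:gqgqgf}. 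Each factor $\varphi^{\frac12}(\cdot/\eps)$ contributes a power $\eps$, whence $\E\{\|\cdot\|^2\}\lesssim\eps^2\|f\|^2$ and the $L^1(\Omega;L^2)$ norm is $\lesssim\eps\|f\|$.

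The main obstacle is precisely these entangled two-fluctuation terms involving ${\cal G}_{1\eps}$. A naive Cauchy--Schwarz separating ${\cal G}_{1\eps}$ (size $\sqrt\eps$ in operator norm by \eqref{eq:estimG1epsReps}) from the bounded field $\tilde q_\eps$ discards the oscillation of $\tilde q_\eps$ and yields only $\sqrt\eps$; the content of the lemma is that $\delta z_\eps(x),\delta z_\eps(y),\delta z_\eps(1)$ are themselves $O(\sqrt\eps)$ oscillatory quantities (by \eqref{eq:bddeltazeps}) that must be paired against the adjacent mean-zero factor to supply the second power of $\sqrt\eps$. I expect the four-point-correlation expansion above to be the cleanest route; where the geometry permits, an alternative is to factor $\|{\cal G}_{1\eps}\tilde q_\eps g\|_{L^2}\le\|\delta z_\eps\|_{L^\infty}\,\|\Phi_\eps\|_{L^2}$, with $\Phi_\eps$ the oscillatory integral of $\tilde q_\eps$ against $\partial G$, and then apply Cauchy--Schwarz in $\Omega$ using $\E\{\|\delta z_\eps\|_\infty^2\}\lesssim\eps$ and $\E\{\|\Phi_\eps\|^2\}\lesssim\eps\|g\|^2$. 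Collecting the $O(\eps)$ bounds for all summands yields \eqref{eq:bdseps}.
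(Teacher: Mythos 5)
Your proposal is correct and follows essentially the same route as the paper: the same splitting of $s_\eps$ (the double-$\tilde q_\eps$ terms via the ${\cal G}_\eps$-version of Lemma \ref{lem:gqgqgf} and Cauchy--Schwarz with \eqref{eq:bdopsoeps}, the ${\cal R}_\eps$-terms via $\E\{\|{\cal R}_\eps\|\}\lesssim\eps$), and the same treatment of the decisive cross terms $I_1,\dots,I_4$ by expanding $\E\{\|\cdot\|^2\}$ into a four-point correlation of the jointly mixing fields $b_\eps,\delta\rho_\eps,\tilde q_\eps$, invoking Lemma \ref{lem:mixingfourpoints}, and integrating the $\varphi^{\frac12}(\cdot/\eps)$ factors. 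The only cosmetic differences are that the paper disposes of ${\cal G}_\eps\tilde q_\eps{\cal G}_\eps\tilde q_\eps(u_\eps-u_0)$ just before stating the lemma, and works with $\rho_\eps$ rather than $\bar\rho$ in $I_2,I_3$, which is immaterial since $\rho_\eps$ is a.s. bounded.
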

\begin{proof}
  Because $G(z_\eps(x),z_\eps(y);z_\eps(1))$ is uniformly bounded
  $\P-$a.s., the proof of Lemma \ref{lem:gqgqgf} generalizes to show
  that $\E\{\|{\cal G}_\eps \tilde q_\eps{\cal G}_\eps \tilde
  q_\eps{\cal G}f\|^2\}\lesssim \eps^2\|f\|^2$. We already know that
  $\E\{\|{\cal R}_\eps\|\}\lesssim \eps$. It remains to address the
  terms $I_1={\cal G}_{1\eps} \delta\rho_\eps f$, $I_2={\cal G}\tilde
  q_\eps{\cal G}_{1\eps}\rho_\eps f$, $I_3={\cal G}_{1\eps}\tilde
  q_\eps{\cal G}\rho_\eps f$, and $I_4={\cal G}\tilde q_\eps{\cal
    G}\delta \rho_\eps f$.
  
  Because $\rho_\eps$ is uniformly bounded $\P-$a.s., the first three
  terms are handled in a similar way. Let us consider
  $\E \{I_1^2\}$, which is bounded by a finite number (three here) of 
  operators of the form
  \begin{displaymath}
    \E\{\dint_{D^3} \delta z_\eps(v_1(x,y)) H(x,y)
      \delta z_\eps(v_2(x,\zeta)) H(x,\zeta) 
     \delta\rho_\eps(y) \delta\rho_\eps(\zeta) f(y)f(\zeta)dxdyd\zeta \},
  \end{displaymath}
  where $v_k(x,y)$ is either $x$, $y$, or $1$ for $k=1,2$, and
  $H(x,y)$ is a uniformly bounded function. Using the definition of
  $\delta z_\eps$, we recast the above integral as
  \begin{displaymath}
    \dint_{D^3} \dint_0^{v_1}\dint_0^{v_2}
    \E\{b_\eps(t_1)b_\eps(t_2)\delta \rho_\eps(y)\delta\rho_\eps(\zeta)\}
     dt_1 dt_2 H(x,y) H(x,\zeta) f(y)f(\zeta) dxdyd\zeta.
  \end{displaymath}
  Using \eqref{eq:bdq1234}, we see that the above integral is bounded 
  by terms of the form
  \begin{displaymath}
    \dint_{D^3} \dint_0^{v_1}\dint_0^{v_2}
     \varphi^{\frac12}\Big(\dfrac{u_1-u_2}\eps\Big)
     \varphi^{\frac12}\Big(\dfrac{u_3-u_4}\eps\Big)
     dt_1 dt_2 | H(x,y) H(x,\zeta) | |f(y)||f(\zeta)| dxdyd\zeta,
  \end{displaymath}
  where $(u_1,u_2,u_3,u_4)=(u_1,u_2,u_3,u_4)(t_1,t_2,y,\zeta)$ is an
  arbitrary (fixed) permutation of $(t_1,t_2,y,\zeta)$. Because
  $\varphi(r)$ is integrable, the Cauchy Schwarz inequality shows that
  the above term is $\lesssim\eps^2\|f\|^2$.  The term $\E\{I_4^2\}$
  is given by
  \begin{displaymath}
    \E\Big\{\dint_{D^4} G(x,y)G(x,\zeta) \tilde q_\eps(y)\tilde q_\eps(\zeta)
       G(y,z)  G(\zeta,\xi) \delta\rho_\eps(z) \delta\rho_\eps(\xi)
     f(z)f(\xi) d[xyz\zeta\xi]\Big\}.
  \end{displaymath}
  Since $G(x,y)$ is uniformly bounded on $D$, we again use
  \eqref{eq:bdq1234} as above to obtain a bound of the form
  $\eps^2\|f\|^2$.
\end{proof}
It remains to
analyze the convergence of the contribution $[{\cal G}_{1\eps}\bar\rho +
{\cal G}\delta \rho_\eps - {\cal G}\tilde q_\eps {\cal G}\bar\rho]f$.

As in \eqref{eq:u1eps1dhelm}, we define
\begin{equation}
  \label{eq:u1epsellip}
  u_{1\eps}(x,\omega)= \dfrac 1{\sqrt\eps}\Big[{\cal G}_{1\eps}\bar\rho +
{\cal G}\delta \rho_\eps - {\cal G}\tilde q_\eps {\cal G}\bar\rho\Big]f (x).
\end{equation}
We recast the above term as 
\begin{equation}
  \label{eq:kernelsu1eps}
  u_{1\eps}(x,\omega)=\dfrac1{\sqrt\eps}\dint_0^1 \Big[
    b\Big(\dfrac t\eps\Big) H_b(x,t) + 
    \delta\rho\Big(\dfrac t\eps\Big) H_\rho(x,t) -
    \tilde q\Big(\dfrac t\eps\Big) H_q(x,t)\Big]dt,
\end{equation}
with 
\begin{equation}
  \label{eq:kernels1d}
  \begin{array}{rcl}
    H_b(x,t)&=&\dint_0^1 \Big[\chi_x(t) \pdr{}x G(x,y;1)
     + \chi_y(t)  \pdr{}y G(x,y;1) + \pdr{}L G(x,y;1)\Big]
    \bar\rho f(y) dy  \!\!\!\!\!\!\!\!\!\!\\
   H_\rho(x,t) &=& G(x,t) f(t) \\
   H_q(x,t) &=& G(x,t) \dint_0^1 G(t,z) f(z) dz.
  \end{array}
\end{equation}
where $\chi_x(t)=1$ if $0<t<x$ and vanishes otherwise. We have the 
following result.
\begin{theorem}
  \label{thm:cvellip1d}
  Let $f\in L^\infty(0,1)$.  The process $u_{1\eps}(x,\omega)$
  converges weakly and in distribution in the space of continuous
  paths ${\cal C}(D)$ to the limit $u_1(x,\omega)$ given by
  \begin{equation}
    \label{eq:u1ellip1d}
    u_1(x,\omega)= \dint_0^1 \sigma(x,t) dW_t,
  \end{equation} 
  where $W_t$ is standard Brownian motion and 
  \begin{equation}
    \label{eq:sigmaellip1d}
    \begin{array}{rcl}
    \sigma^2(x,t) &=& 2\dint_0^\infty 
        \E\{F(x,t,0)F(x,t,\tau)\} d\tau,\\
     F(x,t,\tau)&=&H_b(x,t) b(\tau)+ H_{\rho}(x,t) \delta \rho(\tau)
      - H_q(x,t)\tilde q(\tau).
   \end{array}
  \end{equation}
  As a consequence, the corrector to homogenization thus satisfies
  that:
  \begin{equation}
    \label{eq:cvuepsellip1d}
    \dfrac{u_\eps-u_0}{\sqrt\eps}(x) \xrightarrow{\,\rm dist. \,}
     u_1(x,\omega),\quad \mbox{ as }\eps\to0,
  \end{equation}
   in the space of integrable paths $L^1(D)$.
\end{theorem}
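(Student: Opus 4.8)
The plan is to follow the architecture of the proof of Theorem~\ref{thm:cv1dhelm} and invoke Proposition~\ref{prop:2}: I would establish (a) convergence of all finite-dimensional distributions of $u_{1\eps}$ to those of $u_1$, and (b) tightness in $\mathcal C(D)$ via the Kolmogorov criterion. The starting point is the explicit representation \eqref{eq:kernelsu1eps}, which writes $u_{1\eps}(x,\omega)$ as a single oscillatory integral $\eps^{-1/2}\int_0^1[b(t/\eps)H_b(x,t)+\delta\rho(t/\eps)H_\rho(x,t)-\tilde q(t/\eps)H_q(x,t)]\,dt$ of three jointly stationary, bounded, mixing processes tested against the kernels in \eqref{eq:kernels1d}. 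Since all three processes are bounded $\P$-a.s., every moment is finite and the jointly mixing four-point bound \eqref{eq:bdq1234} of Lemma~\ref{lem:mixingfourpoints} is available for arbitrary products of $b,\delta\rho,\tilde q$.

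For the finite-dimensional distributions, I would fix points $x_1,\dots,x_n$ and reduce, exactly as in \eqref{eq:cvonemoment}, the convergence of the characteristic function of $(u_{1\eps}(x_j))_j$ to convergence in distribution of a single scalar integral $I_\eps=\eps^{-1/2}\int_0^1[b(t/\eps)M_b(t)+\delta\rho(t/\eps)M_\rho(t)-\tilde q(t/\eps)M_q(t)]\,dt$, where $M_\bullet(t)=\sum_j k_j H_\bullet(x_j,t)$. I then repeat the three-step scheme: approximate the kernels by functions constant on $M$ subintervals, show the resulting block variables $M_{\eps j}$ are asymptotically independent using \eqref{eq:strongmixing}, and apply the one-dimensional central limit theorem for mixing sequences \cite{B-AP-82} to each block. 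The essential new point is that, on each block, the integrand is a single mean-zero stationary mixing process $\Xi_j=c_{bj}b+c_{\rho j}\delta\rho-c_{qj}\tilde q$, so the block limit is Gaussian with variance $h_j\int_\Rm \E\{\Xi_j(0)\Xi_j(r)\}\,dr$; summing over blocks and refining the mesh produces a centered Gaussian of variance $\int_0^1\sigma^2(x,t)\,dt$, with the cross-correlations $R_{bb},R_{b\rho},R_{bq},R_{\rho\rho},R_{\rho q},R_{qq}$ entering precisely through $\E\{F(x,t,0)F(x,t,\tau)\}$ as in \eqref{eq:sigmaellip1d}. One caveat must be flagged: because $H_b$ carries the factor $\chi_x(t)$, the combined symbol $M_b$ has jumps at $t=x_1,\dots,x_n$, so the piecewise-constant approximation must be carried out in $L^2(0,1)$ rather than uniformly; this is harmless since the analogue of \eqref{eq:appmh} is controlled by the $L^2$ norm of the kernel difference.

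The tightness step is where the harmonic-coordinate correction genuinely changes the analysis, and I expect it to be the main obstacle. In the Helmholtz case one used $\nu=\beta=2$ together with Lipschitz continuity of $G(x,y)$ in $x$. Here the $\chi_x(t)$ in $H_b$ makes $x\mapsto H_b(x,t)$ discontinuous across $x=t$, so the naive second-moment estimate only yields $\E\{|u_{1\eps}(x)-u_{1\eps}(\xi)|^2\}\lesssim|x-\xi|$, i.e. $\delta=0$, which fails the Kolmogorov criterion. I would instead verify Proposition~\ref{prop:2} with $\nu=\beta=4$ and $\delta=1$. Writing $H_b(x,t)=\chi_x(t)\phi(x)+\psi(x,t)+\theta(x,t)$ with $\phi(x)=\int_0^1\partial_x G(x,y;1)\bar\rho f(y)\,dy$ continuous and $\psi,\theta$ Lipschitz in $x$, the smooth pieces (together with the Lipschitz kernels $H_\rho,H_q$) contribute $O(|x-\xi|^4)$, while the jump piece contributes, after invoking \eqref{eq:bdq1234}, a sum of paired terms of the form $\eps^{-2}\int_{[\xi,x]^4}\varphi^{1/2}(|t_i-t_j|/\eps)\varphi^{1/2}(|t_k-t_l|/\eps)\,dt$; each paired factor integrates to $O(|x-\xi|)$ because $\varphi^{1/2}$ is integrable, so the whole fourth moment is $O(|x-\xi|^2)$, giving $\delta=1$ (the mixed jump/smooth cross terms carry an extra $O(|x-\xi|)$ and are no larger). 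The uniform bound $\sup_\eps\E\{|u_{1\eps}(x)|^4\}<\infty$ follows from the same four-point estimate with bounded kernels.

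Finally, weak convergence of $u_{1\eps}$ to $u_1$ in $\mathcal C(D)$ follows from (a) and (b). To obtain \eqref{eq:cvuepsellip1d} I would combine this with the strong estimate already in hand: the decomposition \eqref{eq:decueupsmu0} together with definition \eqref{eq:u1epsellip} gives $\eps^{-1/2}(u_\eps-u_0)=u_{1\eps}+\eps^{-1/2}s_\eps$, and Lemma~\ref{lem:seps} yields $\E\{\|\eps^{-1/2}s_\eps\|_{L^2(D)}\}\lesssim\sqrt\eps\to0$. Hence $\eps^{-1/2}(u_\eps-u_0)-u_{1\eps}\to0$ in $L^1(\Omega;L^2(D))$, so the two sequences share the same distributional limit $u_1$ in $L^1(D)$.
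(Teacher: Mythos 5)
Your proposal is correct and follows essentially the same route as the paper's proof: the same isolation of the non-Lipschitz kernel $\chi_x(t)L_1(x,t)$ coming from $H_b$, the same fourth-moment Kolmogorov estimate yielding $\E\{|u_{1\eps}(x)-u_{1\eps}(\xi)|^4\}\lesssim|x-\xi|^2$ via the four-point bound \eqref{eq:bdq1234} and integrability of $\varphi^{1/2}$, the same block/CLT treatment of the finite-dimensional distributions for the combined process $b,\delta\rho,\tilde q$, and the same appeal to Lemma \ref{lem:seps} to pass from $u_{1\eps}$ to $\eps^{-1/2}(u_\eps-u_0)$. Your extra caveat about approximating the discontinuous symbol $M_b$ in $L^2$ rather than uniformly is a small but genuine refinement of a point the paper glosses over.
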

We may recast $u_{1\eps}(x,\omega)$ as
\begin{equation}
    \label{eq:decu1eps}
    u_{1\eps}(x,\omega)=\dsum_{k=1}^3 \dfrac{1}{\sqrt\eps}
     \dint_D p_k(\dfrac t\eps) H_k(x,t)dt,
\end{equation}
where the $p_k$ are mean-zero processes and the kernels $H_k(x,t)$ are
given in \eqref{eq:kernels1d}. The corrector in \eqref{eq:u1ellip1d}
may then be rewritten as
\begin{equation}
  \label{eq:u1ellip1d-2}
  u_1(x) = \dsum_{k=1}^3 \dint_D \sigma_k(x,t) dW_t^j,
\end{equation}
with three correlated standard Brownian motions such that
\begin{equation}
  \label{eq:corrBM}
  dW^j_t dW^k_t = \rho_{jk} dt, 
\end{equation}
where we have defined
\begin{equation}
  \label{eq:sigmarho}
  \begin{array}{rcl}
 \sigma_k(x,t) &=& H_k(x,t) \sqrt2\Big(\dint_0^\infty \E\{p_k(0)p_k(\tau)\}
    d\tau\Big)^{\frac12} \\[2mm]
  \rho_{jk} &=& \dfrac{\dint_0^\infty \E\{p_j(0)p_k(\tau)+p_k(0)p_j(\tau)\}
   d\tau}{2\Big(\dint_0^\infty \E\{p_j(0)p_j(\tau)\}d\tau
        \dint_0^\infty \E\{p_k(0)p_k(\tau)\}d\tau\} \Big)^{\frac12}}.
  \end{array}
\end{equation}
That \eqref{eq:u1ellip1d} and \eqref{eq:u1ellip1d-2} are equivalent
comes from the straightforward calculation that both processes are
mean zero Gaussian processes that have the same correlation function.
The new equation \eqref{eq:u1ellip1d-2} shows more clearly the
linearity of the $\sigma_k$, whence $u_1(x)$, with respect to the
source term $f(x)$.

\begin{proof}
  We recast $u_{1\eps}(x,\omega)$ as
  \begin{displaymath}
    u_{1\eps}(x,\omega)=\dsum_{k} \dfrac{1}{\sqrt\eps}
     \dint_D q_k(\dfrac t\eps) H_k(x,t)dt,
  \end{displaymath}
  with a different decomposition as in \eqref{eq:decu1eps}, where the
  $q_k$ are mean-zero processes and the kernels $H_k(x,t)$ are given
  implicitly in \eqref{eq:kernels1d}.  We verify that we can choose
  the terms $H_k(x,t)$ in the above decomposition so that all of them
  are uniformly (in $t$) Lipschitz in $x$, except for one term, say
  $H_1(x,t)$, which is of the form
  \begin{displaymath}
    H_1(x,t) = \chi_x(t) L_1(x,t),\qquad
    L_1(x,t) = \dint_0^1 \pdr{}x G(x,y;1) \bar\rho f(y) dy,
  \end{displaymath}
  where $L_1(x,t)$ is uniformly (in $t$) Lipschitz in $x$.  This
  results from the fact that $G(x,y;1)$ is Lipschitz continuous and
  that its partial derivatives are bounded and piecewise Lipschitz
  continuous; we leave the tedious details to the reader.
  
  Because of the presence of the term $H_1(x,t)$ in the above
  expression, it is not sufficient to consider second-order moments of
  $u_{1\eps}$ as in the proof of Thm. \ref{thm:cv1dhelm}. Rather, we
  consider fourth-order moments as follows:
  \begin{displaymath}
    \begin{array}{ll}
        \E\{|u_{1\eps}(x,\omega)-u_{1\eps}(\xi,\omega)|^4\}
   = &\dfrac{1}{\eps^2} \dsum_{k_1,k_2,k_3,k_4} \dint_{D^4}
    \E\{q_{k_1}(\dfrac {t_1}\eps)q_{k_2}(\dfrac {t_2}\eps)
    q_{k_3}(\dfrac {t_3}\eps)q_{k_4}(\dfrac {t_4}\eps)\}\times\\ \qquad&
   \prod\limits_{m=1}^4 
   (H_{k_m}(x,t_m)-H_{k_m}(\xi,t_m)) dt_1dt_2dt_3dt_4.
    \end{array}
  \end{displaymath}
  Using the mixing condition of the processes $q_k$ and Lemma
  \ref{lem:mixingfourpoints} (where each $q$ in \eqref{eq:bdq1234} may
  be replaced by $q_k$ without any change in the result), we obtain
  that $\E\{|u_{1\eps}(x,\omega)-u_{1\eps}(\xi,\omega)|^4\}$ is
  bounded by a sum of terms of the form
  \begin{displaymath}
    \dfrac{1}{\eps^2}\dint_{D^4} \varphi^{\frac12}(\frac{t_2-t_1}\eps)
       \varphi^{\frac12}(\frac{t_4-t_3}\eps)\prod\limits_{m=1}^4 
   (H_{k_m}(x,t_m)-H_{k_m}(\xi,t_m)) dt_1dt_2dt_3dt_4,
  \end{displaymath}
  whence is bounded by terms of the form
  \begin{displaymath}
    \Big( \dfrac{1}{\eps} \dint_{D^2} 
    \varphi^{\frac12}(\frac{t_2-t_1}\eps)\prod\limits_{m=1}^2
   (H_{k_m}(x,t_m)-H_{k_m}(\xi,t_m)) dt_1dt_2 \Big)^2.
  \end{displaymath}
  When all the kernels $H_{k_m}$ are Lipschitz continuous, then the
  above term is of order $|x-\xi|^4$.  The largest contribution is
  obtained when $k_1=k_2=1$ because $H_1(x,t)$ is not uniformly
  Lipschitz continuous. We now concentrate on that contribution. We
  recast
  \begin{displaymath}
    H_1(x,t)-H_1(\xi,t) = (\chi_x(t)-\chi_\xi(t))L_1(x,t) +
    \chi_\xi(t)(L_1(x,t)-L_1(\xi,t)).
  \end{displaymath}
  Again, the largest contribution to the fourth moment of $u_{1\eps}$
  comes from the term $(\chi_x(t)-\chi_\xi(t))L_1(x,t)$ since
  $L_1(x,t)$ is Lipschitz continuous. Assuming that $x\geq\xi$ without
  loss of generality, we calculate that 
  \begin{displaymath}
    \begin{array}{ll}
    \dint_{D^2} (\chi_x(t)-\chi_\xi(t))L_1(x,t)
     (\chi_{\xi}(s)-\chi_\xi(s))L_1(\xi,s) \dfrac{1}{\eps}
    \varphi^{\frac12}(\frac{t-s}\eps) dtds \\
   = \dint_{\xi}^x\dint_{\xi}^x L_1(x,t)L_1(\xi,s) \dfrac{1}{\eps}
    \varphi^{\frac12}(\frac{t-s}\eps) dtds \lesssim (x-\xi),
    \end{array}
  \end{displaymath}
  since $\varphi^{\frac12}$ is integrable. Note that this term
  is not of order $|\xi-x|^2$. Nonetheless, we have shown that 
  \begin{displaymath}
     \E\{|u_{1\eps}(x,\omega)-u_{1\eps}(\xi,\omega)|^4\} \lesssim
    |\xi-x|^2,
  \end{displaymath}
  so that we can apply the Kolmogorov criterion in Prop. \ref{prop:2}
  with $\nu=2$, $\beta=4$, and $\delta=1$. This concludes the proof of
  tightness of $u_{1\eps}(x,\omega)$ as a process with values in the
  space of continuous functions ${\cal C}(D)$.

  \medskip
  
  It remains to verify step (a) of Prop. \ref{prop:2}.  The
  finite-dimensional distributions are treated as in the proof of Thm.
  \ref{thm:cv1dhelm} and are replaced by the analysis of random
  integrals of the form:
  \begin{displaymath}
  \dfrac1{\sqrt\eps}\dint_0^1 \Big[
    b\Big(\dfrac t\eps\Big) m_b(t) + 
    \delta\rho\Big(\dfrac t\eps\Big) m_\rho(t) + 
    \tilde q\Big(\dfrac t\eps\Big) m_q(t)\Big]dt.
  \end{displaymath}
  The functions $m$ are continuous and can be approximated by $m_h$
  constant on intervals of size $h$ so that we end up with $M$
  independent (in the limit $\eps\to0$) variables of the form:
  \begin{displaymath}
  \dfrac{\sqrt h}{\sqrt N} \dsum_{j=1}^N
  m_{bh} b_j + m_{\rho h} \delta \rho_j + m_{qh} \tilde q_j.
  \end{displaymath}
  It remains to apply the central limit theorem as in the proof of
  Thm.  \ref{thm:cv1dhelm}. The above random variable converges in
  distribution to
  \begin{displaymath}
  {\cal N}(0,h\sigma^2),\quad
  \sigma^2 = 2\dint_0^\infty \E\{(m_{bh} b + m_{\rho h} \delta \rho
         + m_{qh} \tilde q)(0)(m_{bh} b + m_{\rho h} \delta \rho
         + m_{qh} \tilde q)(t)\} dt.
  \end{displaymath}
  This concludes our analysis of the convergence in distribution of
  $u_{1\eps}$ to its limit in the space of continuous paths ${\cal
    C}(D)$. The convergence of $u_\eps-u_0$ follows from the bound
  \eqref{eq:bdseps}.
\end{proof}

\section{Correctors for spectral problems}
\label{sec:spectral}

\subsection{Abstract convergence result}
\label{sec:abstractspectral}

For $\omega\in\Omega$, let $A_\eta(\omega)$ be a sequence of bounded
(uniformly in $\omega$ $\P-$a.s. and in $\eta>0$), compact,
self-adjoint operators, converging to a deterministic, compact,
self-adjoint, operator $A$ as $\eta\to0$ in the sense that the
following error estimate holds:
\begin{equation}
  \label{eq:errorAeps}
  \E\|A_\eta(\omega) - A\|^p\lesssim \eta^p,\qquad 
   \mbox{ for some } 1\leq p<\infty,
\end{equation}
where $\|A_\eta(\omega) - A\|$ is the $L^2(D)$ norm and $D$ is an open
subset of $\Rm^d$.

The operators $A$ and $\P-$a.s. $A_\eta(\omega)$ admit the spectral
decompositions $(\lambda_n,u_n)$ and $(\lambda_n^\eta,u_n^\eta)$,
where the real-valued eigenvalues are ordered in decreasing values of
their absolute values and counted $m_n$ times, where $m_n$ is their
multiplicity.

For $\lambda_n$, let $\mu_n$ be (one of) the closest eigenvalue of $A$
that is different from $\lambda_n$. Let us then define the distance:
\begin{equation}
  \label{eq:distdn}
  d_n = \dfrac{|\lambda_n-\mu_n|}2.
\end{equation}
Following \cite{Kato66}, we analyze the spectrum of $A_\eta$ in the
vicinity of $\lambda_n$. Let $\Gamma$ be the circle of center
$\lambda_n$ and radius $d_n$ in the complex plane and let
$R(\zeta,A)=(A-\zeta)^{-1}$ be the resolvent of $A$ defined for all
complex numbers $\zeta\not\in \sigma(A)$, the spectrum of $A$. The
projection operator onto the spectral components of $B$ inside the
curve $\Gamma$ is defined by
\begin{equation}
  \label{eq:Pnspectral}
  P_n[B] = -\dfrac{1}{2\pi i}\dint_\Gamma R(\zeta,B)d\zeta.
\end{equation}
Note that for all $\zeta\in\Gamma$, we have that
$R(\zeta,A)P_n[A]=(\lambda_n-\zeta)^{-1}$. We can then prove the
following result:
\begin{proposition}
  \label{prop:cvspect}
  Let $A_\eta$ and $A$ be the operators described above and let
  $\lambda_n$ be fixed.  Then, for $\eta$ sufficiently small, there
  are exactly $m_n$ eigenvalues $\lambda_n^\eta$ of $A_\eta$ inside
  the circle $\Gamma$.  Moreover, we have the following estimates:
  \begin{equation}
    \label{eq:evspetral}
    \E \{ |\lambda_n - \lambda_n^\eta|^p \} + 
     \E \{ |\|u_n^\eta-u_n\|^p \} \lesssim 
      \dfrac{\eta^p}{d_n^p}\wedge 1,
  \end{equation}
  for a suitable labeling of the eigenvectors $u_n^\eta$ of $A^\eta$
  associated to the eigenvalues $\lambda_n^\eta$.
\end{proposition}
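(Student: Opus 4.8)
The plan is to follow the Kato resolvent approach and reduce everything to the operator-norm difference $\|A_\eta - A\|$, whose $p$-th moment is controlled by \eqref{eq:errorAeps}. First I would record the deterministic resolvent bound on the contour $\Gamma$. Since $A$ is self-adjoint, $\|R(\zeta,A)\| = 1/\mathrm{dist}(\zeta,\sigma(A))$, and the geometry of $\Gamma$ --- a circle centered at $\lambda_n$ of radius $d_n$, with the nearest other eigenvalue $\mu_n$ at distance $2d_n$ --- gives $\mathrm{dist}(\zeta,\sigma(A)) = d_n$ for every $\zeta \in \Gamma$, hence $\|R(\zeta,A)\| = 1/d_n$ on $\Gamma$. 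Writing $A_\eta - \zeta = (A-\zeta)(I + R(\zeta,A)(A_\eta - A))$ and inverting by Neumann series, I obtain that on the event $\{\|A_\eta - A\| \le d_n/2\}$ the resolvent $R(\zeta, A_\eta)$ exists for all $\zeta \in \Gamma$ with $\|R(\zeta,A_\eta)\| \le 2/d_n$. In particular $\Gamma$ avoids $\sigma(A_\eta)$, so the spectral projection $P_n[A_\eta]$ in \eqref{eq:Pnspectral} is well defined.

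Next I would estimate the projection difference. The second resolvent identity $R(\zeta,A_\eta) - R(\zeta,A) = R(\zeta,A_\eta)(A - A_\eta)R(\zeta,A)$, together with the two resolvent bounds above, gives $\|R(\zeta,A_\eta) - R(\zeta,A)\| \le 2\|A_\eta - A\|/d_n^2$ on $\Gamma$. Integrating over $\Gamma$, whose length is $2\pi d_n$, in \eqref{eq:Pnspectral} yields
\begin{equation*}
  \|P_n[A_\eta] - P_n[A]\| \lesssim \frac{\|A_\eta - A\|}{d_n}.
\end{equation*}
On the event where the right-hand side is $<1$, the two projections have equal rank, so $A_\eta$ has exactly $m_n$ eigenvalues inside $\Gamma$, which proves the first assertion.

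Then I would match eigenvectors and bound the eigenvalues. Because $\mathrm{rank}\,P_n[A_\eta] = \mathrm{rank}\,P_n[A] = m_n$ with the projections close, the restriction of $P_n[A_\eta]$ to $\mathrm{Ran}\,P_n[A]$ is a near-isometry onto $\mathrm{Ran}\,P_n[A_\eta]$; transporting a basis through it and then diagonalizing $A_\eta$ inside the $m_n$-dimensional invariant subspace produces eigenvectors $u_n^\eta$ of $A_\eta$ with $\|u_n^\eta - u_n\| \lesssim \|P_n[A_\eta] - P_n[A]\| \lesssim \|A_\eta - A\|/d_n$. For the eigenvalues, writing a unit eigenvector as $u_n^\eta = v + w$ with $v = P_n[A]u_n^\eta$ and $w = (I - P_n[A])u_n^\eta$, orthogonality of the spectral subspaces and $Av = \lambda_n v$ give $\lambda_n^\eta = \langle A_\eta u_n^\eta, u_n^\eta\rangle = \lambda_n - \lambda_n\|w\|^2 + \langle Aw,w\rangle + \langle(A_\eta - A)u_n^\eta, u_n^\eta\rangle$, where $\|w\| = \|(P_n[A_\eta]-P_n[A])u_n^\eta\| \lesssim \|A_\eta - A\|/d_n$. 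Since $A$ is bounded and $d_n \lesssim \|A\|$, this yields $|\lambda_n^\eta - \lambda_n| \lesssim \|A_\eta - A\| \lesssim \|A_\eta - A\|/d_n$ on the good event (one may equally invoke Weyl's inequality for the eigenvalue bound).

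Finally I would pass to $L^p$ and split on the two regimes. On the good event $G = \{\|A_\eta - A\| \le d_n/2\}$ the preceding bounds give $(|\lambda_n^\eta - \lambda_n| + \|u_n^\eta - u_n\|)^p \lesssim \|A_\eta - A\|^p/d_n^p$, whose expectation is $\lesssim \eta^p/d_n^p$ by \eqref{eq:errorAeps}. On the complement the eigenvalues are bounded by $\|A\|$ and the eigenvectors are unit vectors, so the integrand is $O(1)$ and its contribution is $\lesssim \P(G^c) \le \E\|A_\eta - A\|^p/(d_n/2)^p \lesssim \eta^p/d_n^p$ by Markov's inequality, while being trivially $\le$ a constant; hence it is $\lesssim \eta^p/d_n^p \wedge 1$. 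Combining the two regimes gives \eqref{eq:evspetral}. I expect the main obstacle to be the eigenvector matching when $m_n > 1$: one must argue intrinsically with the spectral projections and only then extract a consistent labeling of the individual $u_n^\eta$, rather than tracking eigenvectors one at a time; the two-regime bookkeeping that produces the $\wedge 1$ is then routine.
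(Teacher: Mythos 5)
Your proposal is correct and follows essentially the same route as the paper's proof: the resolvent bounds $\|R(\zeta,A)\|=1/d_n$ and $\|R(\zeta,A_\eta)\|\leq (d_n-\|A_\eta-A\|)^{-1}$ on $\Gamma$, the second resolvent identity giving $\|P_n[A_\eta]-P_n[A]\|\lesssim \|A_\eta-A\|/d_n$, transport of the eigenbasis between the two spectral subspaces (the paper uses Kato's explicit unitary $U_n^\eta$ where you use the near-isometry $P_n[A_\eta]|_{\mathrm{Ran}\,P_n[A]}$, which is the same device), and the split into the good event $\{\|A_\eta-A\|\leq d_n/2\}$ and its complement handled by Chebyshev to produce the $\eta^p/d_n^p\wedge 1$ bound. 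The multiplicity caveat you flag is handled in the paper exactly as you suggest, by working with the spectral projections and only then choosing a compatible labeling.
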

\begin{proof}
  It follows from \cite[Theorem IV.3.18]{Kato66} that for those
  realizations $\omega$ such that $\|A_\eta(\omega)-A\|<d_n$, then
  there are exactly $m_n$ eigenvalues of $A_\eta$ in the
  $d_n-$vicinity of $\Gamma$. Since this also holds for every
  $\lambda_m$ such that $d_m>d_n$, we can index the eigenvalues of
  $A_\eta$ as the eigenvalues of $A$. Moreover, 
  \begin{displaymath}
    |\lambda_n^\eta(\omega)-\lambda_n| \leq \|A_\eta(\omega)-A\|.
  \end{displaymath}
  For those realizations $\omega$ such that $\|A_\eta(\omega)-A\|\geq
  d_n$, we choose $m_n$ eigenvalues of $A_\eta(\omega)$ arbitrarily
  among the eigenvalues that have not been chosen in the
  $d_m-$vicinity of $\lambda_m$ for $|\lambda_m|>|\lambda_n|$.

  For all realizations, we thus obtain that 
  \begin{displaymath}
     |\lambda_n^\eta(\omega)-\lambda_n| \lesssim 
     \dfrac{\|A_\eta(\omega)-A\|}{d_n}.
  \end{displaymath}
  It remains to take the $p$th power and average the above expression
  to obtain the first inequality of the proposition.
  
  In order for the eigenvectors $u^\eta_n$ and $u_n$ to be close, we
  need to restrict the size of $\eta$ further. To make sure the
  eigenvectors are sufficiently close, we need to ensure that
  \begin{displaymath}
  P_n[A_\eta] - P_n[A] = \dfrac{-1}{2\pi i}\dint_\Gamma
   [R(\zeta,A_\eta)-R(\zeta,A)] d\zeta
   = \dfrac{1}{2\pi i} \dint_\Gamma R(\zeta,A_\eta)(A_\eta-A)
    R(\zeta,A) d\zeta,
  \end{displaymath}
  is sufficiently small. On the circle $\Gamma$ and for
  $\|A-A_\eta\|<d_n$, we verify that
  \begin{displaymath}
     \sup\limits_{\zeta\in\Gamma} \|R(\zeta,A)\| = \dfrac{1}{d_n},\qquad
    \sup\limits_{\zeta\in\Gamma} \|R(\zeta,A_\eta)\| \leq 
     \dfrac{1}{d_n-\|A-A_\eta\|},
  \end{displaymath}
  by construction of $d_n$ and by using
  $R^{-1}(\zeta,A_\eta)=R^{-1}(\zeta,A)+(A_\eta-A)$ and the triangle
  inequality
  \begin{displaymath}
    \|R^{-1}(\zeta,A_\eta)\| \geq \|R^{-1}(\zeta,A)\|- \|A_\eta-A\|
    \geq d_n - \|A_\eta-A\|.  
  \end{displaymath}
  Upon integrating the expression for
  $P_n[A_\eta]-P_n[A]$ on $\Gamma$, we find that 
  \begin{displaymath}
    \rho:=\|P_n[A_\eta]-P_n[A]\| \leq  \dfrac{\|A_\eta-A\|}{d_n-\|A_\eta-A\|} 
    \leq \dfrac{ 2 }{d_n}\|A_\eta-A\|<1 ,
  \end{displaymath}
  for $2\|A_\eta-A\|<d_n$.
  
  For self-adjoint operators $A$ and $A_\eta$, the above bound on the
  distance $\rho$ between the eigenspaces is sufficient to
  characterize the distance between the corresponding eigenvectors. We
  follow \cite[I.4.6 \& II.4.2]{Kato66} and construct the unitary
  operator
  \begin{equation}
    \label{eq:isomesp}
    U^\eta_n = \Big(I-(P_n[A_\eta]-P_n[A])^2\Big)^{-\frac12}
      \Big(P_n[A_\eta] P_n[A] + (I-P_n[A_\eta])(I-P_n[A])\Big).
  \end{equation}
  
  Let $u_{n,k}$, $1\leq k\leq m_n$ be all the eigenvectors associated
  to an eigenvalue $\lambda_n$, $n\geq1$. Then the eigenspace
  associated to $\lambda_n^\eta$ admits for an orthonormal basis the
  eigenvectors defined by \cite{Kato66}
  \begin{equation}
    \label{eq:uetan}
    u_{n,k}^\eta = U^\eta_n u_{n,k},\qquad 1\leq k\leq m_n.
  \end{equation}
  
  The relation \eqref{eq:isomesp} may be recast as
  \begin{displaymath}
    U^\eta_n = (I-R^\eta_n)\big(I+P_n[A_\eta](P_n[A_\eta]-P_n[A])
        + (P_n[A_\eta]-P_n[A])P_n[A_\eta]\big),
  \end{displaymath}
  where $\|R^\eta_n\|\lesssim \rho^2$. This shows that 
  \begin{displaymath}
    \|U^\eta_n-I\| \lesssim \rho\qquad \mbox{ and } \qquad
    \|u_{n,k}^\eta-u_{n,k} \| \lesssim \rho
    \lesssim \dfrac{1}{d_n} \|A_\eta-A\|,\quad 1\leq k\leq m_n,
  \end{displaymath}
  whenever $d_n^{-1}\|A_\eta-A\|<\mu$ for $\mu$ sufficiently small.
  When $d_n^{-1}\|A_\eta-A\|\geq\mu$, we find that
  $\|u_{n,k}-u_{n,k}^\eta\|\lesssim 2\mu\|A_\eta(\omega)-A\|/d_n$,
  where the vectors $u_{n,k}^\eta$ are constructed as an arbitrary
  orthonormal basis of the eigenspace associated to $\lambda_n^\eta$.
  Upon taking $p$th power and ensemble averaging, we obtain
  \eqref{eq:evspetral}.
\end{proof}

\subsection{Correctors for eigenvalues and eigenvectors}
\label{sec:correigv}

Let $(\lambda_n,u_n)$ be a solution of $Au_n=\lambda_n u_n$ and let
$\lambda^\eta_n$ and $u^\eta_n$ be the solution of $A_\eta u^\eta_n =
\lambda^\eta_n u^\eta_n$ defined in Proposition \ref{prop:cvspect}.
We assume that \eqref{eq:errorAeps} holds with $p=2$.

We calculate that
\begin{displaymath}
  \dfrac{\lambda^\eta_n-\lambda_n}{\eta} = \Big( u_n, \dfrac
   {A_\eta-A}{\eta} u_n \Big) + \dfrac 1\eta
   \Big( u_n^\eta-u_n , \big((A_\eta-\lambda^\eta_n)-(A-\lambda_n)\big)
    u_n\Big).
\end{displaymath}
The last term, which we denote by $r_n^\eta(\omega)$ is bounded by
$O(\eta)$ in $L^1(\Omega)$ using the results of Proposition
\ref{prop:cvspect} with $p=2$ and the Cauchy Schwarz inequality. Thus,
$r_n^\eta(\omega)$ converges to $0$ in probability.

Let us assume that the eigenvectors are defined on a domain
$D\subset\Rm^d$ and that for a smooth function $M(\bx)$, we have:
\begin{equation}
  \label{eq:abstcv}
  \Big( M(\bx), \dfrac{A_\eta-A}\eta u_n (\bx)\Big)  
    \xrightarrow{\,\rm dist. \,}
   \dint_{D^2} M(\bx) \sigma_n(\bx,\by) dW_{\by}d\bx 
   \qquad \mbox{ as } \eta\to0.
\end{equation}
Using this result, and provided that the eigenvectors $u_n(\bx)$ are
sufficiently smooth, we obtain that
\begin{equation}
  \label{eq:cvcorrspect}
   \dfrac{\lambda^\eta_n-\lambda_n}{\eta}  
    \xrightarrow{\,\rm dist. \,}
   \dint_{D^2} u_n(\bx) \sigma_n(\bx,\by) dW_{\by}d\bx
   := \dint_D \Lambda_n(\by)dW_{\by}\qquad 
   \mbox{ as } \eta\to0.
\end{equation}
The eigenvalue correctors are therefore Gaussian variables, which may
conveniently be written as a stochastic integral that is quadratic in
the eigenvectors since $\sigma_n(\bx,\by)$ is a linear functional of
$u_n$.  The correlations between different correctors may also
obviously be obtained as
\begin{equation}
  \label{eq:correigv}
  \E \Big\{ \dfrac{\lambda^\eta_n-\lambda_n}{\eta} 
        \dfrac{\lambda^\eta_m-\lambda_m}{\eta} \Big\}  
    \xrightarrow{\eta \to 0}
   \dint_D \Lambda_n(\bx) \Lambda_m(\bx) d\bx.
\end{equation}

Let us now turn to the corrector for the eigenvectors. Note that
\begin{displaymath}
  \|u_n-u_n^\eta\|^2 = 2(1-(u_n,u_n^\eta)),
\end{displaymath}
so that $(u_n,u_n^\eta)$ is equal to $1$ plus an error term of order
$O(\eta^2)$ on average. The construction of the eigenvectors in
\eqref{eq:uetan} show that $u_n-u_n^\eta$ is of order $O(\eta^2)$ in
the whole eigenspace associated to the eigenvalue $\lambda_n$.  It
thus remains to analyze the convergence properties of
$(u_n-u_n^\eta,u_m)$ for all $m\not=n$. A straightforward calculation
similar to the one obtained for the eigenvalue corrector shows that
\begin{displaymath}
  \Big(\dfrac{u_n^\eta-u_n}\eta, (A-\lambda_n)u_m \Big)
   = - \Big( \dfrac{(A_\eta-\lambda_n^\eta)-(A-\lambda_n)}\eta u_n,u_m\Big)
     - \dfrac{1}{\eta} ( (A_\eta-A)(u_n^\eta-u_n),u_m).
\end{displaymath}
The last term converges to $0$ in probability (and is in fact of order
$O(\eta)$ in $L^1(\Omega)$ as above). We thus find that 
\begin{equation}
  \label{eq:conveig}
  \Big(\dfrac{u_n^\eta-u_n}\eta, u_m\Big)  
    \xrightarrow{\,\rm dist. \,}
  \dfrac{1}{\lambda_n-\lambda_m}
   \dint_{D^2}u_m(\bx)\sigma_n(\bx,\by)dW_\by d\bx.
\end{equation}
The Fourier coefficients of the eigenvector correctors converge to
Gaussian random variables. As in the case of eigenvalues, it is
straightforward to estimate the cross-correlations of the Fourier
coefficients corresponding to (possibly) different eigenvectors.
\subsection{Applications to some specific problems}
\label{sec:applispectral}

The first application pertains to the following problem:
\begin{equation}
  \label{eq:Helmspect}
  A_\eps = (P(\bx,D)+q_\eps)^{-1},\qquad
  A = P(\bx,D)^{-1}.
\end{equation}
Lemma \ref{lem:estcorrHelm} and its corollary \eqref{eq:bdaepshelm}
show that \eqref{eq:errorAeps} holds with $p=2$ and $\eta=\eps^{\frac
  d2}$. The operators $A_\eps$ and $A$ are also compact and
self-adjoint for a large class of operators $P(\bx,D)$ which includes
the Helmholtz operator $P(\bx,D)=-\Delta + q_0(\bx)$.

Let $(\lambda_n^\eps,u_n^\eps)$ be the solutions of $\lambda^\eps
P_\eps u_\eps = u_\eps$ and $(\lambda_n,u_n)$ the solutions of
$\lambda Pu=u$. Then we find that 
\begin{equation}
  \label{eq:conveighelm}
  \dfrac{\lambda^\eps_n-\lambda_n}{\eps^{\frac d2}} 
    \xrightarrow{\,\rm dist. \,}
   -\lambda_n \sigma \dint_{D^2} u_n(\bx) G(\bx,\by) u_n(\by) dW_{\by} d\bx
  = -\lambda_n^2 \sigma \dint_D u_n^2(\by) dW_{\by},
\end{equation}
or equivalently, that for the eigenvalues of $P_\eps$ and $P$, we
have:
\begin{equation}
  \label{eq:cvinveighelm}
  \dfrac{(\lambda^\eps_n)^{-1}-\lambda_n^{-1}}{\eps^{\frac d2}} 
    \xrightarrow{\,\rm dist. \,}\sigma \dint_D u_n^2(\by) dW_{\by}.
\end{equation}
The Fourier coefficients of the eigenvectors satisfy similar expressions.

The second example is the one-dimensional elliptic equation
\eqref{eq:1dellipt}. Still setting $\eta=\eps^{\frac12}$,
we find that 
\begin{displaymath}
  \dfrac{A_\eps-A}{\sqrt\eps} u_n  \xrightarrow{\,\rm dist. \,}
  \dint_D \sigma_n(x,t) dW_t,
\end{displaymath}
where $\sigma_n(x,t)$ is defined in \eqref{eq:sigmaellip1d} with the
source term $f$ in \eqref{eq:kernels1d} being replaced by $u_n(x)$. The
operators $A_\eps$ and $A$ satisfy \eqref{eq:errorAeps} with $p=2$
thanks to Lemma \ref{lem:estcorr1d} and its corollary
\eqref{eq:bdaepshelm}. The expressions for the eigenvalue and
eigenvector correctors are thus directly given by
\eqref{eq:cvcorrspect} and \eqref{eq:conveig}, respectively.
 
\subsection{Correctors for time dependent problems}
\label{sec:timedep}

As an application of the preceding theory, let us now consider an
evolution problem of the form
\begin{equation}
  \label{eq:evol}
  u_t + \ep Pu =0 ,\quad t>0, \qquad u(0)=u_0,
\end{equation}
where $\ep$ is a constant, typically $\ep=1$ or $\ep=i$, and $P$ is a
symmetric pseudodifferential operator with domain ${\cal D}(P)\subset
L^2(D)$ for some subset $D\subset\Rm^d$ and with a compact inverse
$A=P^{-1}$, which we assume without loss of generality, has positive
eigenvalues.

We then consider the randomly perturbed problem
\begin{equation}
  \label{eq:evolpert}
  u^\eta_t + \ep P_\eta u_\eta =0,\quad t>0,\qquad u_\eta(0)=u_0,
\end{equation}
where $P_\eta(\omega)$ verifies the same hypotheses as $P$ with
compact inverse $A_\eta=P_\eta^{-1}$. 

We assume that $A_\eta$ and $A$ are sufficiently close so that
\eqref{eq:errorAeps} holds. Following the notation of the preceding
section, we denote by $\lambda_n$ and $\lambda^\eta_n$ the eigenvalues
of $A$ and $A_\eta$ and by $u_n$ and $u_n^\eta$ the corresponding
eigenvectors.

We then verify that
\begin{displaymath}
  u(t) = e^{-\ep t P} u_0 = \dsum_n e^{-\ep\lambda_n t} 
   (u_n,u_0) u_n:= \dsum_n \alpha_n(t) u_n,\qquad
   \alpha_n(t) = e^{-\ep\lambda_n t} (u_n,u_0).
\end{displaymath}
and
\begin{displaymath}
  u_\eta(t) = \dsum_n \alpha^\eta_n(t) u^\eta_n,\qquad
    \alpha_n^\eta(t) = e^{-\ep\lambda_n^\eta t} (u_\eta^n,u_0).
\end{displaymath}

We can now compare the Fourier coefficients as follows:
\begin{equation}
  \label{eq:diffFcoef}
  \dfrac{\alpha^\eta_n-\alpha_n}\eta
   = \dfrac{e^{-\ep\lambda_n^\eta t} - e^{-\ep\lambda_n t}}\eta
    (u_n,u_0) + e^{-\ep\lambda_n t} (\dfrac{u^\eta_n-u_n}\eta,u_0)
    + r_\eta, 
\end{equation}
where $|r_\eta|\to0$ strongly in $L^p(\Omega)$ as $\eta\to0$. This may
be recast as
\begin{equation}
  \label{eq:diffFcoef2}
  \dfrac{\alpha^\eta_n-\alpha_n}\eta
   = e^{-\ep\lambda_n t} \ep t 
      \dfrac{\lambda_n -\lambda_n^\eta}\eta
    (u_n,u_0) + e^{-\ep\lambda_n t} (\dfrac{u^\eta_n-u_n}\eta,u_0)
    + s_\eta, 
\end{equation}
where $|s_\eta|\to0$ strongly in $L^p(\Omega)$ as $\eta\to0$.

The above difference thus converges to a mean zero Gaussian random
variable whose variance may easily be estimated from the results
obtained in the preceding section.

Since we do not control the convergence of the eigenvectors for
arbitrary values of $n$ (because we do not control in this study the
speed of convergence in distribution of the random correctors), we
cannot obtain the law of the full corrector $u_\eta(t)-u(t)$. We can,
however, obtain a corrector for the low frequency parts $u_N^\eta(t)$
and $u^N(t)$ of $u_\eta(t)$ and $u(t)$, respectively, where only the
$N$ first terms are kept in the sum in the index $n$. We may easily
estimate the corrector for $(u_N^\eta(t)-u_N(t),u_m)$ using the above
expansion for the Fourier coefficients and the results obtained in the
preceding section. We again obtain that the corrector is a mean zero
Gaussian variable whose variance may be calculated explicitly.

Other time-dependent equations may be treated in a similar way. For
instance, the wave equation
\begin{equation}
  \label{eq:wave}
  u_{tt} + Pu=0,\qquad u(0)=u_0,\quad u_t(0)=g_0,
\end{equation}
where $P$ is a symmetric operator with compact and positive definite
inverse, may be recast as
\begin{equation}
  \label{eq:fos}
  w_t -Aw =0, \quad w(0)=w_0,\qquad w=\left(
    \begin{matrix}
      u\\u_t 
    \end{matrix} \right),\quad
   A=\left(
     \begin{matrix}
       0&1 \\ P&0
     \end{matrix}\right).
\end{equation}
We verify that the eigenvalues $\lambda_n$ of $A$ are purely imaginary
and equal to $\pm i \sqrt{\lambda_P}$, where $\lambda_P$ are the
positive eigenvalues of $P$. The orthogonal projector onto the $n$th
eigenspace of $A$ is found to be
\begin{displaymath}
  \Pi_{A,\lambda} = \left(
    \begin{matrix}
      \Pi_{P,-\lambda^2} & 0 \\ 0 & \lambda \Pi_{P,-\lambda^2}
    \end{matrix} \right),
\end{displaymath}
so that
\begin{equation}
  \label{eq:solwave}
  \left(
    \begin{matrix}
      u\\u_t
    \end{matrix}\right)(t) =
  \dsum_\lambda e^{-\lambda t} \left(
    \begin{matrix}
      \Pi_{P,-\lambda^2} & 0 \\ 0 & \lambda \Pi_{P,-\lambda^2}
    \end{matrix} \right) \left(
    \begin{matrix}
      u_0\\g_0
    \end{matrix}\right).
\end{equation}
A similar expression may be used for the perturbed problem
$u_\eta(t)$, where $P$ is replaced by $P_\eta$. The results presented
earlier in this section easily generalize to provide an estimate for
the low frequency component of $u(t)-u_\eta(t)$. We leave the details
to the reader.

When the Green's function associated to the operator
$\partial_t+\epsilon P$ is sufficiently regular, for instance when
$\epsilon P=-\Delta$, more refined results may be obtained by
considering expansions similar to the expansion \eqref{eq:intHelm}
considered for steady-state problems. We do not consider such
developments here.
\section{Conclusions}
\label{sec:conclu}

We have considered the corrector to the homogenization of source and
spectral problems for the Helmholtz equation with highly oscillatory
random potential. The method works because the operator ${\cal
  G}q_\eps$ appearing in \eqref{eq:intHelm} may be seen as
lower-order, in the sense that it converges rapidly to $0$ with
$\eps$. This requires that the homogenized solution ${\cal G}f$ be a
good approximation to the source problem \eqref{eq:intHelm}. The
method was then generalized to the one-dimensional elliptic problem
\eqref{eq:1dellipt}, which after a change of variables to harmonic
coordinates, may also be recast as an integral equation
\eqref{eq:intueps} with a term ${\cal G}_\eps\tilde q_\eps$ that may
also be seen as lower-order.

Such expansions are not currently available for more challenging
problems of the form $-\nabla\cdot a_\eps(\bx,\omega)\nabla u_\eps=f$,
augmented with appropriate boundary conditions. The use of the Green's
function to the homogenized elliptic equation does not allow for a
rapidly converging expansion of the form \eqref{eq:intHelm} or
\eqref{eq:intueps}. The analysis of correctors for such equations, for
which current state of the art estimations are given in
\cite{Yur-SM-86}, remains an open problem; see \cite{CN-EJP-00} for a
related discretized elliptic equation.

The correctors were analyzed here in the setting where the random
coefficients have integrable correlation function $R(\bx)$ in
\eqref{eq:corrq} (and additional mixing properties). The expansions in
\eqref{eq:intHelm} and \eqref{eq:intueps} may be generalized to random
coefficients with correlation functions $R(\bx)$ which decay as
$|\bx|^{-\alpha d}$ for some $0<\alpha<1$ as $|\bx|\to\infty$. In such
frameworks, following the expansions obtained in \cite{BGMP-AA}, we
expect random correctors with Gaussian statistics and amplitudes of
order $\eps^{\alpha \frac d2}$ rather than $\eps^{\frac d2}$, at least
for dimensions $1\leq d\leq3$ for the Helmholtz problem. These
long-range effects will be analyzed elsewhere.

\section*{Acknowledgment}

The author would like to thank Josselin Garnier, Wenjia Jing, Tomasz
Komorowski, and George Papanicolaou for stimulating discussions on the
subject of equations with random coefficients and central limit
theorems.  This work was supported in part by NSF Grant DMS-0239097
and an Alfred P.  Sloan Fellowship.



\end{document}